\newtheorem{theorem}{Theorem}[section]
\newtheorem{lemma}[theorem]{Lemma}
\newtheorem{proposition}[theorem]{Proposition}
\newtheorem{corollary}[theorem]{Corollary}
\newtheorem{remark}[theorem]{Remark}
\DeclarePairedDelimiter\abs{\lvert}{\rvert}%
\newcommand{\R}{\mathbb{R}}
\newcommand{\G}{\mathcal{G}}
\newcommand{\F}{\mathcal{F}}
\newcommand{\N}{\mathbb{N}}
\newcommand{\Eps}{\mathcal{E}}
\newcommand{\HH}{\mathcal{H}}
\newcommand{\K}{\mathcal{K}}
\newcommand{\f}{\frac}
\newcommand{\ee}{\mathcal{E}}
\newcommand\vv{\textsc{v}}
\newcommand\ww{\textsc{w}}
\tikzstyle{nodino}=[circle,draw,fill,inner sep=0pt,minimum size=0.5mm]
\tikzstyle{infinito}=[circle,inner sep=0pt,minimum size=0mm]
\tikzstyle{nodo}=[circle,draw,fill,inner sep=0pt, minimum size=0.5*width("k")]
\tikzstyle{nodo_vuoto}=[circle,draw,inner sep=0pt, minimum size=0.5*width("k")]
\tikzset{every loop/.style={min distance=10mm,in=300,out=240,looseness=10}}
\tikzset{place/.style={circle,thick,draw=blue!75,fill=blue!20,minimum
		size=6mm}}
\tikzset{place2/.style={circle,thick,draw=red!75,fill=red!20,minimum
		size=6mm}}
\title{Doubly nonlinear Schr\"odinger ground states on metric graphs}
\author[F. Boni]{Filippo Boni}
\address[F. Boni]{Politecnico di Torino, Dipartimento di Scienze Matematiche "G.L. Lagrange", Corso Duca degli Abruzzi 24, 10129, Torino, Italy.}
\email{filippo.boni@polito.it}
\author[S. Dovetta]{Simone Dovetta}
\address[S. Dovetta]{Università degli Studi di Roma ``La Sapienza", Dipartimento di Scienze di Base ed Applicate per l'Ingegneria, via Antonio Scarpa 14, 00161 - Roma, Italy.}
\email{simone.dovetta@uniroma1.it}
\begin{document}
\maketitle
\begin{abstract}
	We investigate the existence of ground states at prescribed mass on general metric graphs with half--lines for focusing doubly nonlinear Schr\"odinger equations involving both a standard power nonlinearity and delta nonlinearities located at the vertices. The problem is proved to be sensitive both to the topology and to the metric of the graph and to exhibit a phenomenology richer than in the case of the sole standard nonlinearity considered in \cite{ASTcpde,AST}. On the one hand, we provide a complete topological characterization of the problem, identifying various topological features responsible for existence/non--existence of doubly nonlinear ground states in specific mass regimes. On the other hand, we describe the role of the metric in determining the exact interplay between these different topological properties. 
\end{abstract}


\section{Introduction and main results}
In this paper we investigate the existence of ground states for the doubly nonlinear Schr\"odinger energy functional
\begin{equation}
\label{Fpq1}
F_{p,q}(u,\G)=\f{1}{2}\int_{\G}|u'|^{2}\,dx-\f{1}{p}\int_{\G}|u|^{p}\,dx-\f{1}{q}\sum_{\vv\in V}|u(\vv)|^{q}
\end{equation}
under the mass constraint
\begin{equation*}
\int_{\G}|u|^{2}\,dx=\mu\,.
\end{equation*}
Here $\G=(V,E)$ is a non--compact metric graph with finitely many vertices $V$ and edges $E$, some of which unbounded. Given $\mu>0$, let
\begin{equation*}
H^{1}_{\mu}(\G):=\{u\in H^{1}(\G)\,:\,\|u\|_{L^{2}(\G)}^{2}=\mu\}
\end{equation*}
denote the mass--constrained space (for standard definition of functional spaces on graphs see for instance \cite{BK13}), and 
\begin{equation}
\label{eq:problem}
\F_{p,q}(\mu,\G):=\inf_{u\in H^{1}_{\mu}(\G)}F_{p,q}(u,\G)
\end{equation}
the ground state energy level at mass $\mu$. Accordingly, a ground state of $F_{p,q}$ at mass $\mu$ on $\G$ is defined as a global minimizer of \eqref{Fpq1} among all functions belonging to $H^{1}_{\mu}(\G)$, i.e. a function $u\in H^{1}_{\mu}(\G)$ such that $F_{p,q}(u,\G)=\F_{p,q}(\mu,\G)$.

In what follows, we will consider the regime where both the nonlinearities are $L^2$--subcritical, i.e.
\begin{equation}
\label{eq:pq_sub}
2<p<6,\quad 2<q<4\,.
\end{equation}
Our aim is to discuss the dependence of the ground states problem \eqref{eq:problem} both on the parameters $\mu, p, q$ and on topological and metric properties of the graphs. 

\vspace{.1cm}
Since the second half of the previous century, the analysis of differential models on metric graphs (or networks) has been witnessing a significant growth and it is nowadays a lively and rich research area. As a consequence, the literature in the field is already extremely wide and continues to increase, so that no attempt to provide a detailed overview of all the existing results will be done here. We limit ourselves to note that both linear and nonlinear problems have been addressed extensively. For the linear case, we refer to \cite{BKKM,BLS,EFK,FMN,HKMP,KKLM} and references therein for some of the most recent developments. In the nonlinear case, a prominent focus has been devoted to Schr\"odinger equations (see for instance \cite{AD,ADST,BMP,BDL20,BDL21,DGMP,DSTaim,DSTjlms,GKP,KP-JDE,NP,PS17,PSV} as well as the recent review \cite{ABR} and references therein), but other nonlinear models have been considered too (see \cite{MNS} for the KdV equation and \cite{BCT,BCT2} for the Dirac equation).

Particularly relevant for our discussion is the ground states problem 
\begin{equation}
	\label{eq:problemE}
	\ee(\mu,\G):=\inf_{u\in H_\mu^1(\G)}E(u,\G)
\end{equation}
for the nonlinear Schr\"odinger energy functional with the standard nonlinearity only
\begin{equation}
	\label{eq:E}
	E(u,\G):=\f12\int_\G|u'|^2\,dx-\f1p\int_\G|u|^p\,dx\,.
\end{equation}
On the real line $\G=\R$, it is well--known \cite{cazenave} that in the $L^2$--subcritical regime $p\in(2,6)$ problem \eqref{eq:problemE} has a unique (up to space translations) positive solution, the soliton $\phi_\mu\in H_\mu^1(\R)$, explicitly given by
\begin{equation}
	\label{eq:phimu}
	\phi_{\mu}(x)=\mu^{\alpha}\phi_{1}(\mu^{\beta}x),\qquad x\in\R\,,
\end{equation}
where $\phi_1\in H_1^1(\R)$ is the soliton at mass $\mu=1$ and 
\begin{equation}
	\label{alfabeta}
	 \alpha:=\f{2}{6-p},\qquad \beta:=\f{p-2}{6-p}\,.
\end{equation}
After first investigations on star graphs (see e.g. \cite{acfn_pl}), the behaviour of \eqref{eq:problemE} on general non--compact graphs with half--lines has been characterized in \cite{ASTcpde,AST} for $L^2$--subcritical nonlinearities $p\in(2,6)$ and in \cite{AST-CMP} for the $L^2$--critical regime $p=6$. In particular, it has been shown that whether ground states at a certain mass exist strongly depends both on topological and on metric properties of the graph. As it will be important in the following, we highlight that a general topological assumption, named Assumption (H), ruling out existence of ground states of $E$ at any mass $\mu$ is given in \cite[Section 2]{AST}. Such an assumption can be stated for instance as follows
\[
\text{(H)\qquad every point of the graph lies on a trail that contains two half--lines}
\]
(for other equivalent formulations of Assumption (H) see \cite{ASTparma}). Recall that a trail is a connected path in $\G$ in which every edge of the path is run through exactly once. Example of a graph fulfilling Assumption (H) is given in Figure \ref{fig:grafoH}.

\begin{figure}[t]
	\centering
	\includegraphics[width=.65\columnwidth]{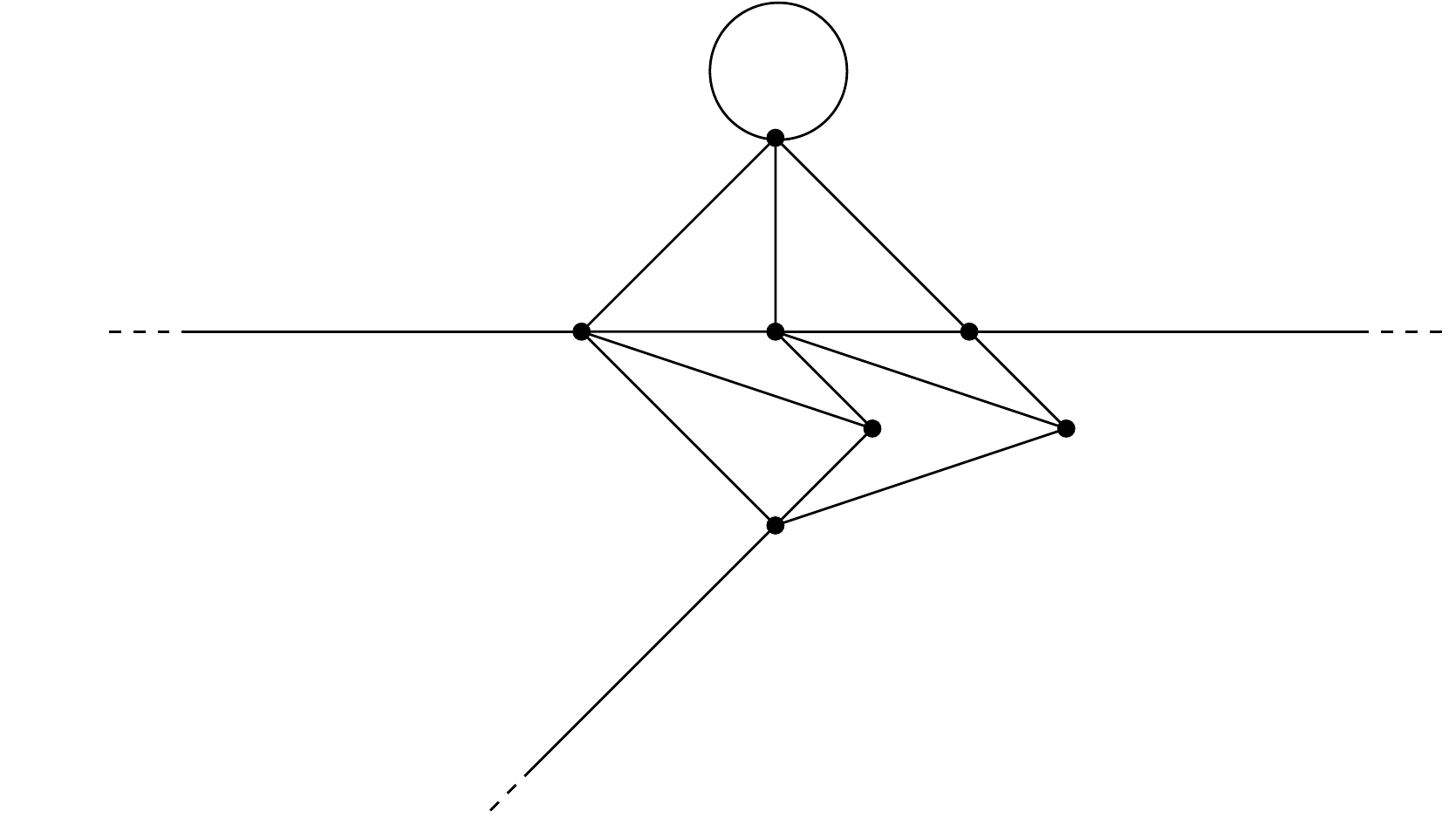}
	\caption{Example of a graph fulfilling Assumption (H) as in \cite{AST}.}
	\label{fig:grafoH}
\end{figure}

In the last years, the model with the sole standard nonlinearity has been generalized at least in two directions. 

On the one hand, \cite{PS20} recently addressed the ground states problem for the energy functional
\[
E_{6,p}(u,\G)=\f12\int_\G|u'|^2\,dx-\f16\int_\G|u|^6\,dx-\f\tau p\int_\G|u|^p\,dx\,,\qquad\tau\in\R,\,p\in(2,6),
\]
accounting for the combined effect of a $L^2$--critical and a (weighted) $L^2$--subcritical standard nonlinearity. It is shown how the interplay between the two standard nonlinearities sensibly affects the ground states problem, giving rise to new phenomena with respect to the single nonlinearity \eqref{eq:problemE}. This work seems to be the first paper on graphs fitting in the quite active research line of Schr\"odinger equations with combined standard nonlinearities (see for instance \cite{tao,jeanvisc,killip,lecoz,lewin,miao2,soave1,soave2}). 

On the other hand, fueled for instance by possible applications as the dynamic of confined charges \cite{jona,malomed} or the resonant tunneling \cite{nier}, concentrated nonlinearities have been proposed. In Euclidean spaces, models describing the effect of a delta potential have been considered first in dimension one and three \cite{ADFT1,ADFT2,AT,at_jfa} (see also the recent papers \cite{HL1,HL2}), whereas the analysis in dimension two is more recent \cite{ABCT,ACCT,ACCT2,CFN,CCT, FGI}. On non--compact graphs, the presence of a single standard nonlinearity restricted to the compact core of the graph (i.e. the union of all the bounded edges) has been discussed for instance in \cite{DT,ST-JDE, ST-NA,T-JMAA}. The first step towards the investigation of the interaction between a standard nonlinearity and a pointwise term can be found in \cite{acfn_jde,acfn_aihp}, where a linear delta potential is located at the unique vertex of a star graph. Such a model amounts to take $\G$ as a star graph and $q=2$ in \eqref{Fpq1}. In this context, ground states are proved to exist for small masses only, as they bifurcate from the corresponding solution of the associated linear problem. Since ground states for the model \eqref{eq:E} with the sole standard nonlinearity never exist on star graphs, this existence result is a first marker of the nontrivial interplay that occurs between a standard nonlinearity and a (linear) delta potential. 

In this paper, we push forward this analysis by considering a model somewhat on the edge between combined and concentrated nonlinearities: the energy functional \eqref{Fpq1} involving two focusing nonlinearities, a standard one and a pointwise one. The corresponding ground states problem \eqref{eq:problem} has already been addressed both on the real line \cite{BD} and on star graphs \cite{ABD}. On the real line, in the regime \eqref{eq:pq_sub} where both the nonlinearities are $L^2$--subcritical ground states exist for every value of the mass (see \cite[Theorem 1.3]{BD} and Section 2 below). The portrait is sensibly richer on star graphs. New threshold phenomena arise, concerning both the value of the mass and that of the exponents $p,q$. Precisely, if $q<\f p2+1$, then ground states exist if and only if the mass is smaller than a critical value, whereas if $q>\f p2+1$ the situation is reversed and ground states exist for large masses only (see \cite[Theorem 1.1]{ABD} as well as Section 2 below). Furthermore, if $q=\f p2+1$, then the existence of ground states is insensitive of the mass and depends only on how many half--lines appear in the graph. In particular, ground states exist for every mass on star graphs with a number of half--lines smaller than a threshold (depending on $p$), whereas they do not exist for any mass whenever the number of half--lines exceeds such a critical value \cite[Theorem 1.2]{ABD}.

\vspace{.1cm}
We can now state and discuss the main results of the present paper, that extend the analysis of the ground states problem \eqref{eq:problem} to general non--compact metric graphs with half--lines. 

Let us first highlight that, as a straightforward consequence of Corollary \ref{compactcor} below, existence of ground states of $F_{p,q}$ at mass $\mu$ is for free whenever it is already known that ground states at mass $\mu$ exist for the problem \eqref{eq:problemE} with the standard nonlinearity only (see Remark \ref{rem:exF_da_exE}). On the contrary, when ground states of $E$ do not exist, the ground states problem for $F_{p,q}$ is far from trivial. Since in general existence of such states depends deeply both on the topology and on the metric of $\G$ and may require a case by case analysis, it is natural to restrict our attention to graphs for which it is granted a priori that ground states of $E$ never exist, for any value of the mass. For this reason, though this will clearly not exhaust the class of graphs for which non--existence of ground states of $E$ occurs, in what follows, according to \cite[Example 2.4]{ASTcpde}, we will focus on graphs fulfilling Assumption (H) that are not isomorphic to the real line or to the so--called towers of bubbles (see \cite[Figure 3]{ASTcpde}). Note that, to ease the statement of our main results, we will always write that the graphs we are considering satisfy Assumption (H), being understood without further notice that we are excluding the line and the towers of bubbles.

Even though Assumption (H) is enough to prevent solutions of problem \eqref{eq:problemE}, this is no longer true in the doubly nonlinear case \eqref{eq:problem}. The first part of our analysis provides a complete topological characterization of the problem. We begin with the following result.
\begin{theorem}
\label{3half-deg3}
Let $\G$ be a non--compact graph satisfying Assumption (H) with at least $3$ half--lines and all vertices of degree greater than or equal to $3$. Then there exist two thresholds $\underline{\mu}_{p,q}:=\underline{\mu}_{p,q}(p,q,\G)$, $\overline{\mu}_{p,q}:=\overline{\mu}_{p,q}(p,q,\G)$, so that $0<\underline{\mu}_{p,q}\leq\overline{\mu}_{p,q}$ and 
\begin{itemize}
		\item[(i)] if $q<\frac{p}{2}+1$, then ground states of $F_{p,q}$ at mass $\mu$ exist if $\mu<\underline{\mu}_{p,q}$ and do not exist if $\mu> \overline{\mu}_{p,q}$ ;
		\item[(ii)] if $q>\frac{p}{2}+1$, then ground states of $F_{p,q}$ at mass $\mu$ exist if  $\mu> \overline{\mu}_{p,q}$ and do not exist if $\mu<\underline{\mu}_{p,q}$.
\end{itemize}	
\end{theorem}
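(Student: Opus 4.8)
My plan is to encode the two thresholds directly through the claimed dichotomy and then prove the existence and non--existence endpoints separately. In case (i) I would set $\underline{\mu}_{p,q}:=\sup\{\nu>0:\text{ground states of }F_{p,q}\text{ exist at every mass }\mu<\nu\}$ and $\overline{\mu}_{p,q}:=\inf\{\nu>0:\text{ground states of }F_{p,q}\text{ exist at no mass }\mu>\nu\}$, and symmetrically in case (ii), exchanging small and large masses. With these definitions $\underline{\mu}_{p,q}\le\overline{\mu}_{p,q}$ is automatic, since any $\mu\in(\overline{\mu}_{p,q},\underline{\mu}_{p,q})$ would be forced both to carry and not to carry a ground state; thus the theorem reduces to four assertions --- existence for small and non--existence for large mass in case (i), and the reverse in case (ii) --- which simultaneously yield $0<\underline{\mu}_{p,q}$ and $\overline{\mu}_{p,q}<\infty$.

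The unifying tool is the mass rescaling $u(x)=\mu^{\alpha}v(\mu^{\beta}x)$, with $\alpha,\beta$ as in \eqref{alfabeta}, which maps $H^1_\mu(\G)$ onto the unit--mass space of $\G_\mu:=\mu^\beta\G$, the graph obtained by dilating every bounded edge by $\mu^\beta$ (half--lines, and hence Assumption (H), being preserved). Setting $s:=\f{p+2}{6-p}$ and $\gamma:=\f{p+2-2q}{6-p}$, a direct computation gives
$$F_{p,q}(u,\G)=\mu^{s}\Big[E(v,\G_\mu)-\mu^{-\gamma}\,\tfrac1q\textstyle\sum_{\vv\in V}|v(\vv)|^{q}\Big],\qquad \ee(\mu,\R)=\mu^{s}\,\ee(1,\R)\,.$$
Since $\ee(1,\R)<0$ and $\sgn\gamma=\sgn(\tfrac p2+1-q)$, the concentrated term is amplified by $\mu^{-\gamma}$, which blows up exactly for small mass in case (i) and for large mass in case (ii), and vanishes in the two complementary regimes: this is the analytic counterpart of the claimed dichotomy.

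For the existence endpoints I would invoke the compactness criterion of Corollary \ref{compactcor}, by which $\F_{p,q}(\mu,\G)<\ee(\mu,\R)$ already forces a ground state, and exhibit a competitor realizing the strict inequality. Fixing a half--line of $\G$ parametrised as $[0,\infty)$ from its endpoint $\vv$, and a unit--mass profile $v$ on $[0,\infty)$ with $v(\vv)=v(0)>0$, I set $w_\sigma(x):=\sqrt{\mu\sigma}\,v(\sigma x)$ on that half--line and $0$ elsewhere, so that $w_\sigma\in H^1_\mu(\G)$ and
$$F_{p,q}(w_\sigma,\G)=\tfrac{\mu\sigma^2}2\|v'\|_{2}^{2}-\tfrac{\mu^{p/2}\sigma^{(p-2)/2}}p\|v\|_{p}^{p}-\tfrac{\mu^{q/2}\sigma^{q/2}}q|v(\vv)|^{q}\,.$$
Balancing the quadratic kinetic term against the subquadratic (as $q<4$) concentrated term at $\sigma\simeq\mu^{(q-2)/(4-q)}$, and checking that the standard--nonlinearity term is subdominant there, gives $F_{p,q}(w_\sigma,\G)\simeq-c\,\mu^{q/(4-q)}$. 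As $\tfrac{q}{4-q}<s$ precisely when $q<\tfrac p2+1$, this lies below $\ee(\mu,\R)\simeq-|\ee(1,\R)|\mu^{s}$ for small $\mu$ in case (i); the identical estimate, now with $\tfrac{q}{4-q}>s$, beats it for large $\mu$ in case (ii). In both regimes Corollary \ref{compactcor} then delivers a ground state, and existence holds for a whole interval of masses (notably, degree $\ge3$ is not needed here).

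The non--existence endpoints are the heart of the matter and, in my plan, the main obstacle. They sit in the regime $\mu^{-\gamma}\to0$, where the rescaled functional is a vanishing concentrated perturbation of the pure standard--NLS energy $E(\cdot,\G_\mu)$, whose infimum is $\ee(1,\R)$ and is never attained under Assumption (H). Softness alone is not enough, because an arbitrarily small pointwise perturbation may still bind a state --- exactly as the linear delta interaction does on star graphs at small mass (see the Introduction). I would therefore argue quantitatively: near--minimizers of $E(\cdot,\G_\mu)$ must spread along the half--lines and hence have vanishing vertex values, which I would control through the pointwise bound $|v(\vv)|^{2}\le2\|v\|_{L^2(\G)}\|v'\|_{L^2(\G)}$ coupled with a lower bound on the energy excess $E(v,\G_\mu)-\ee(1,\R)$ caused by any non--negligible $|v(\vv)|$; here the assumptions that all vertices have degree $\ge3$ and that there are at least three half--lines furnish the ``escape room'' that makes vertex concentration strictly costly and enables a comparison with the star--graph non--existence of \cite{ABD}. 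For $\mu^{-\gamma}$ small the concentrated gain then cannot absorb this excess, the infimum equals $\ee(\mu,\R)$ and is not attained, giving non--existence for large mass in case (i) and for small mass in case (ii). The step I expect to be most delicate is making the excess bound uniform as $\G_\mu$ degenerates --- stretching as $\mu\to\infty$, and contracting its compact core to a star of half--lines as $\mu\to0$ --- which is precisely where the full strength of the topological hypotheses should enter.
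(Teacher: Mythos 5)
Your reduction to four endpoint statements and the scaling identity $F_{p,q}(u,\G)=\mu^{2\beta+1}\bigl[E(v,\G_\mu)-\mu^{-\gamma}\,\tfrac1q\sum_{\vv\in V}|v(\vv)|^{q}\bigr]$ are correct and coincide with the paper's Remark \ref{rem:omot}, and your sign analysis of $\gamma$ identifies the right regimes. But there are two genuine gaps. First, the existence competitor $w_\sigma$ (equal to $\sqrt{\mu\sigma}\,v(\sigma\cdot)$ on one half--line and $0$ elsewhere, with $v(0)>0$) does not belong to $H^1(\G)$: the endpoint $\vv$ of that half--line carries further edges (every vertex has degree $\ge 3$ here, and in any case $\G$ is connected and not a single half--line), so your function jumps from $\sqrt{\mu\sigma}\,v(0)>0$ to $0$ across $\vv$, violating the continuity built into $H^1(\G)$ — without which $|u(\vv)|$ is not even well defined. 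The scaling heuristics ($-c\,\mu^{q/(4-q)}$ against $-\theta_p\mu^{(p+2)/(6-p)}$) is sound, but the test function must be repaired, and the repair is not cosmetic: for small mass your optimal $\sigma\to0$, so the profile spreads and cannot be confined to edges emanating from a single vertex. The paper resolves this by taking the soliton on each half--line extended by the constant $\|\phi_m\|_{\infty}$ over the whole compact core (Proposition \ref{exsubdiag-smallmu}) for small mass, and a truncated star--soliton centred at a vertex of degree $\ge3$ (Proposition \ref{deg3-largemu}) for large mass.

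Second, and more seriously, the non--existence endpoints — which you yourself call the heart of the matter — are not proved. You describe the correct strategy (a quantitative lower bound $E(v,\G_\mu)\ge\ee(1,\R)+\mathcal{R}(v)$ with a remainder penalising non--negligible vertex values, to be played against the vanishing weight $\mu^{-\gamma}$), but you explicitly leave open the uniform excess bound as $\G_\mu$ degenerates, and the inequality $|v(\vv)|^{2}\le2\|v\|_{2}\|v'\|_{2}$ you invoke only gives boundedness of vertex values, not the smallness or the energetic cost you need. That uniform bound is precisely where all the work of Section \ref{sec:nonex} lies: Propositions \ref{noex-smallmu} and \ref{noex-largemu} run a contradiction argument on a sequence of putative ground states, rescale them to a fixed mass $\widetilde{\mu}$ chosen so that ground states on $S_3$ do not exist (Proposition \ref{exstarg}), build explicit comparison functions on $S_3$ via the rearrangements of Lemmas \ref{rearr1}--\ref{rearr-halflines}, and extract the remainder from the kinetic energy stored on the shrinking (respectively stretching) compact core; the hypothesis that every vertex has degree at least $3$ enters only in the large--mass case, to find an edge off the two--half--line trail at the vertex of maximal value. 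As written, your proposal establishes neither non--existence endpoint, so the theorem is not proved.
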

The previous theorem says that graphs fulfilling Assumption (H), with at least 3 half--lines and no vertex of degree smaller than 3 behave essentially as star graphs, that are indeed the easiest example of graphs covered by Theorem \ref{3half-deg3}. We point out that the existence parts in Theorem \ref{3half-deg3} are not difficult to obtain and remain true even removing the hypotheses of at least 3 half--lines and all vertices with degree not smaller than 3 (see Propositions \ref{exsubdiag-smallmu}--\ref{deg3-largemu} below). Conversely, the proof of the non--existence statements is rather involved and requires new ideas. We will comment a bit more on this at the end of this section. Note that, by \cite[Theorem 1.1]{ABD}, if $\G$ is a star graph then $\underline{\mu}_{p,q}=\overline{\mu}_{p,q}$ for every $q\neq\f p2+1$. However, the analysis in \cite{ABD} heavily relies on the fact that on star graphs an explicit characterization of the critical points of $F_{p,q}$ in $H_\mu^1(\G)$ is available. Since this is clearly out of reach on general non--compact graphs, to understand whether, for every graph fulfilling the hypotheses of Theorem \ref{3half-deg3}, the two thresholds $\underline{\mu}_{p,q}$, $\overline{\mu}_{p,q}$ coincide seems to be a challenging open question. Notice also that Theorem \ref{3half-deg3} gives no information when $q=\f p2+1$.

Even though at first sight Theorem \ref{3half-deg3} may lead to think that no new phenomenon arises when considering networks more general than star graphs, this is actually not the case. The following theorems identify two topological features of non--compact graphs that are responsible for existence results with no analogue on star graphs.
\begin{theorem}
\label{3half-deg2}
Let $\G$ be a non--compact graph with at least a vertex of degree $2$. Then there exists $\widetilde{\mu}_{p,q}:=\widetilde{\mu}_{p,q}(p,q,\G)>0$ such that ground states of $F_{p,q}$ at mass $\mu$ exist for every $\mu\geq\widetilde{\mu}_{p,q}$.
\end{theorem}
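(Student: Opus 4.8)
The plan is to reduce the existence of a ground state to a strict energy inequality against the level at infinity, and then to establish that inequality for large masses by testing with a soliton--shaped profile centred at the degree-$2$ vertex.

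First I would recall that, in the subcritical regime $2<p<6$, $2<q<4$, the Gagliardo--Nirenberg inequalities together with the pointwise bound $\norm{u}_{L^\infty(\G)}^2\le C\norm{u'}_{L^2(\G)}\norm{u}_{L^2(\G)}$ make $F_{p,q}(\cdot,\G)$ bounded below and coercive on $H^1_\mu(\G)$, so that minimizing sequences are bounded in $H^1(\G)$. Being non--compact, $\G$ carries at least one half--line, and the only way compactness can fail is that part of the mass slides to infinity along a half--line, where the vertex term is not felt; the energy captured by the escaping bump is then governed by the purely standard functional and equals the soliton level $\ee(\mu,\R)$. Hence the compactness criterion underlying Corollary \ref{compactcor} ensures that a ground state of $F_{p,q}$ at mass $\mu$ exists as soon as
\[
\F_{p,q}(\mu,\G)<\ee(\mu,\R),
\]
and the whole proof boils down to producing this strict inequality for all $\mu\ge\widetilde{\mu}_{p,q}$.

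To produce it I would use the degree-$2$ vertex $\vv$: its two incident edges locally play the role of the two rays of the real line, so that the line soliton embeds across $\vv$ without distortion. Writing $\ell>0$ for the smaller of the two edge--lengths at $\vv$ (with $\ell=+\infty$ allowed), I would take $\phi_\mu$ as in \eqref{eq:phimu}, place its peak at $\vv$ and let its two halves run along the two edges, cutting the profile off smoothly inside any bounded edge, setting the competitor $u$ to $0$ on the rest of $\G$, and finally rescaling to meet $\norm{u}_{L^2(\G)}^2=\mu$. Since $\phi_\mu(\vv)=\mu^\alpha\phi_1(0)$, a direct computation gives
\[
\F_{p,q}(\mu,\G)\le F_{p,q}(u,\G)=\ee(\mu,\R)-\f1q\,\phi_1(0)^q\,\mu^{\alpha q}+R(\mu),
\]
where $R(\mu)$ gathers the kinetic and potential cost of the cut--off and the correction due to the mass rescaling.

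The crux, and the step I expect to be the main obstacle, is to control $R(\mu)$ against the genuine gain $\f1q\phi_1(0)^q\mu^{\alpha q}>0$ coming from the focusing vertex term. By the scaling \eqref{eq:phimu}--\eqref{alfabeta} the soliton concentrates on the scale $\mu^{-\beta}\to 0$, so for $\mu$ large the cut--off and the truncated mass only act on the exponentially small tail of $\phi_\mu$ at distance $\sim\ell$ from $\vv$; both contributions to $R(\mu)$ are then bounded by $C\,\mathrm{poly}(\mu)\,e^{-c\mu^\beta\ell}$, i.e. superpolynomially small. As the gain is merely polynomial, of order $\mu^{\alpha q}$ with $\alpha q>0$, there is a threshold $\widetilde{\mu}_{p,q}=\widetilde{\mu}_{p,q}(p,q,\G)$, depending on $\G$ only through $\ell$, beyond which $R(\mu)<\f1q\phi_1(0)^q\mu^{\alpha q}$, whence $\F_{p,q}(\mu,\G)<\ee(\mu,\R)$ and a ground state exists. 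I stress that this argument does not see the sign of $q-(\f p2+1)$ and uses nothing about the rest of $\G$: a single degree-$2$ vertex suffices, which is precisely the mechanism absent on star graphs. Equivalently, one may transplant near $\vv$ (a truncation of) the real--line ground state furnished by \cite{BD}, whose energy already lies strictly below $\ee(\mu,\R)$, and obtain the same conclusion.
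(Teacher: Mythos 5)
Your proposal is correct and follows essentially the same route as the paper: the paper's proof also tests $F_{p,q}$ with a truncated, mass-renormalized soliton $w_\mu=\kappa(\phi_\mu-\delta)_+$ supported on the two edges at the degree-$2$ vertex, observes that the truncation error vanishes as $\mu\to+\infty$ while the vertex term contributes the gain $\f1q|\phi_1(0)|^q\mu^{\alpha q}$, and concludes via Corollary \ref{compactcor}. Your quantitative exponential bound on the cut-off cost is a slightly more explicit version of the paper's qualitative convergence $w_\mu-\phi_\mu\to0$ in $H^1(\R)$, and both arguments are independent of the sign of $q-(\f p2+1)$, exactly as in the paper.
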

\begin{theorem}
	\label{2half}
	Let $\G$ be a non--compact graph with exactly $2$ half--lines. Then there exists $\widehat{\mu}_{p,q}:=\widehat{\mu}_{p,q}(p,q,\G)>0$ such that ground states of $F_{p,q}$ at mass $\mu$ exist for every $\mu\leq\widehat{\mu}_{p,q}$.
\end{theorem}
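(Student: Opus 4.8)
The plan is to prove existence of ground states of $F_{p,q}$ at mass $\mu$ for all sufficiently small $\mu$, exploiting the presence of exactly two half--lines. The natural competitor is the soliton $\phi_\mu$ of \eqref{eq:phimu} on the real line: since the graph has exactly two half--lines, one can imagine the two half--lines as the two ends of a line passing through (part of) the compact core, and place a suitably cut and rearranged soliton along a trail connecting the two half--lines. The first step is therefore to produce a good competitor showing that $\F_{p,q}(\mu,\G) < \ee(\mu,\R) = E(\phi_\mu,\R)$ for small $\mu$, i.e. that the doubly nonlinear energy level on $\G$ lies strictly below the soliton level on the line. The extra negative contribution comes from the delta nonlinearity $-\frac1q\sum_{\vv}|u(\vv)|^q$ evaluated at the internal vertices of the trail, where a soliton--type profile carries nonzero mass; one should estimate this gain as $\mu\to0$ and compare its order in $\mu$ against the concavity/convexity of the soliton energy, using the scaling $\alpha,\beta$ in \eqref{alfabeta}.

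Next I would set up a concentration--compactness / direct--method argument to turn the strict inequality into existence. Take a minimizing sequence $(u_n)\subset H^1_\mu(\G)$ for $\F_{p,q}(\mu,\G)$; standard arguments give that it is bounded in $H^1(\G)$, so up to subsequences $u_n \deb u$ weakly in $H^1$ and $u_n\to u$ uniformly on compact sets (in particular pointwise at every vertex, controlling the delta terms). Writing $m:=\|u\|_{L^2(\G)}^2\in[0,\mu]$, the goal is to rule out the loss of mass at infinity (the dichotomy $0<m<\mu$) and the total vanishing ($m=0$). The standard subadditivity inequality $\F_{p,q}(\mu,\G)\le \F_{p,q}(m,\G)+\ee(\mu-m,\R)$, together with the strict inequality $\F_{p,q}(\mu,\G)<\ee(\mu,\R)$ from the first step and the strict subadditivity of $\mu\mapsto\ee(\mu,\R)$ on the line, should force $m=\mu$, hence strong $L^2$ convergence and, by weak lower semicontinuity of $F_{p,q}$, that $u$ is a minimizer. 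The smallness threshold $\widehat{\mu}_{p,q}$ is exactly the range of $\mu$ for which the first--step competitor estimate yields the needed strict inequality.

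The main obstacle I expect is the first step: constructing the competitor and proving the strict energy inequality $\F_{p,q}(\mu,\G)<\ee(\mu,\R)$ uniformly for small $\mu$. The difficulty is that the gain from the delta terms must be shown to dominate any energy cost incurred in bending the soliton through the compact core and matching it to the graph's geometry. One has to balance two competing scalings in $\mu$: the pointwise term $|u(\vv)|^q$ scales like $\mu^{\alpha q}$ after the substitution \eqref{eq:phimu}, while the deviation of the constrained profile from the free soliton on the line contributes at a higher order, so that for $\mu$ small the delta gain wins precisely when the exponents satisfy the relevant inequality. Here the hypothesis of \emph{exactly} two half--lines is crucial: it guarantees that a line--like trail carrying the whole soliton mass genuinely exists and that no additional half--lines force extra positive kinetic energy, which is what distinguishes this regime from the three--or--more half--lines situation of Theorem \ref{3half-deg3}. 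Once the strict inequality is secured, the compactness argument is essentially routine.
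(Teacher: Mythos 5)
Your overall architecture (build a competitor $u$ with $F_{p,q}(u,\G)<\ee(\mu,\R)$ for small $\mu$, then conclude by compactness) is the same as the paper's, and the second step is indeed routine: it is exactly Corollary \ref{compactcor}. The gap is in the first step, and it is not merely a missing computation. With a soliton--based competitor (soliton profile on the two half--lines, suitably extended over the compact core), the expansion of $F_{p,q}(u,\G)-\ee(\mu,\R)$ as $\mu\to0$ exhibits an exact cancellation at the order where you expect the ``deviation cost'' and the $L^p$ gain to appear: writing $\mu=m+\ell|\phi_m(0)|^2$ for the competitor which is constant on $\K$, the first correction to $\ee(\mu,\R)$ beyond $\ee(m,\R)$ is $-\theta_p(2\beta+1)\ell|\phi_m(0)|^2m^{2\beta}$, and by the identity $\theta_p(2\beta+1)=\tfrac1p|\phi_1(0)|^{p-2}$ (Lemma \ref{thetap-2beta}) this cancels exactly the gain $-\tfrac{\ell}{p}|\phi_m(0)|^p$ coming from the standard nonlinearity on the compact core. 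One is then left comparing the delta gain $-Cm^{\alpha q}$ with the next, \emph{positive}, correction of order $m^{(3p-2)/(6-p)}$, and the delta gain wins only if $q<\tfrac{3p}{2}-1$. Your own phrasing (``the delta gain wins precisely when the exponents satisfy the relevant inequality'') betrays this: the theorem asserts existence for \emph{all} $p\in(2,6)$, $q\in(2,4)$, with no exponent restriction, so a competitor whose usefulness is conditional on an exponent inequality cannot prove it. For instance, for $p$ close to $2$ and $q$ close to $4$ the soliton competitor gives $F_{p,q}(u,\G)>\ee(\mu,\R)$ for all small $\mu$, so the strict inequality you need is simply false for that test function.

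The paper's proof fixes this by taking as profile on the two half--lines not the soliton but the one--dimensional doubly nonlinear ground state $\eta_m$ of Proposition \ref{exline}, extended by the constant $\eta_m(0)$ on the compact core. The three facts that make this work are: $F_{p,q}(\eta_m,\R)<E(\phi_m,\R)$ (inequality \eqref{Fpq<Ephi}); $|\eta_m(0)|^2=o(m)$ (estimate \eqref{infnorm-opic}), which controls the mass correction; and, crucially, $\eta_m(0)>\phi_m(0)$ (inequality \eqref{etamu>phimu}), which turns the vanishing bracket above into the strictly negative quantity $\bigl[\theta_p(2\beta+1)-\tfrac1p(\eta_m(0)/m^\alpha)^{p-2}\bigr]\ell|\eta_m(0)|^2m^{2\beta}$, dominating all remaining error terms regardless of the value of $q$. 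Without some replacement for this idea --- i.e.\ a competitor that strictly beats the soliton energy level at \emph{every} admissible pair $(p,q)$ --- your first step does not go through, and with it the whole proof.
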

Note first that neither Theorem \ref{3half-deg2} nor Theorem \ref{2half} require Assumption (H) to be fulfilled. This is because each of them refers to a topological property rooting for existence of ground states independently of any other features of the graph.

For graphs fulfilling Assumption (H), both Theorem \ref{3half-deg2} and Theorem \ref{2half} shed some light when each of the two hypotheses of Theorem \ref{3half-deg3} are removed. As already highlighted before, note that getting rid of such hypotheses do not affect the existence results stated in Theorem \ref{3half-deg3}.

On the one hand, the presence of a vertex of degree 2 accounts for existence of large masses ground states, regardless of the specific value of $p,q$. As the proof of Theorem \ref{3half-deg2} will display clearly, this can be seen as a reminiscence of the behaviour of the problem on the real line, where ground states always exist and concentrate around the origin as the mass increases. For graphs fulfilling Assumption (H) with at least 3 half--lines, this is particularly interesting in the regime $q<\f p2+1$. Indeed, combining Theorem \ref{3half-deg3}(i) with Theorem \ref{3half-deg2}, the following is immediate.
\begin{corollary}
	\label{cor:3half2deg}
	Let $\G$ be a non--compact graph satisfying Assumption (H), with at least $3$ half--lines and at least a vertex of degree $2$ (e.g. Figure \ref{fig:cor}(A)), and let $q<\frac{p}{2}+1$. Then there exist $\underline{\mu}_{p,q}:=\underline{\mu}_{p,q}(p,q,\G)>0$, $\widetilde{\mu}_{p,q}:=\widetilde{\mu}_{p,q}(p,q,\G)>0$, so that ground states of $F_{p,q}$ at mass $\mu$ exist both if $\mu<\underline{\mu}_{p,q}$ and if $\mu\ge \widetilde{\mu}_{p,q}$.
\end{corollary}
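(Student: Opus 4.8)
The statement is a direct juxtaposition of two existence results already at our disposal, so the plan is simply to verify that the hypotheses of each apply to the graphs under consideration and then to combine the two mass regimes. No variational argument needs to be run from scratch.

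First I would secure existence at small mass. Although Theorem \ref{3half-deg3} is stated under the additional requirement that every vertex have degree at least $3$—which is violated here by the degree-$2$ vertex—the existence half of part (i) does not rely on that requirement. As emphasised in the discussion preceding the statement, it survives the removal of both the degree constraint and the lower bound on the number of half-lines; the relevant self-contained statement is Proposition \ref{exsubdiag-smallmu}. Since $\G$ satisfies Assumption (H), carries at least $3$ half-lines, and since $q<\frac{p}{2}+1$, that proposition furnishes a threshold $\underline{\mu}_{p,q}>0$ such that ground states of $F_{p,q}$ at mass $\mu$ exist whenever $\mu<\underline{\mu}_{p,q}$.

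Next I would secure existence at large mass. Here the decisive hypothesis is the presence of a vertex of degree $2$, which is precisely the assumption of Theorem \ref{3half-deg2}. Applying that theorem to $\G$ yields a threshold $\widetilde{\mu}_{p,q}>0$ such that ground states of $F_{p,q}$ at mass $\mu$ exist for every $\mu\geq\widetilde{\mu}_{p,q}$. Combining the two regimes gives the claim at once.

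There is no genuine obstacle in this argument, since both ingredients are established elsewhere; the only point deserving care is the one flagged above, namely that one must invoke the general small-mass existence proposition rather than Theorem \ref{3half-deg3} itself, because the degree-$2$ vertex places $\G$ outside the scope of the latter. In particular no compatibility between $\underline{\mu}_{p,q}$ and $\widetilde{\mu}_{p,q}$ need be imposed: the two mass ranges are not required to overlap, and the content of the corollary is exactly that existence holds at both ends of the mass spectrum, leaving the behaviour in the intermediate band $[\underline{\mu}_{p,q},\widetilde{\mu}_{p,q})$ unresolved.
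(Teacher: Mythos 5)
Your proposal is correct and matches the paper's own (one-line) derivation: the small-mass regime is handled by Proposition \ref{exsubdiag-smallmu} (which, as you rightly note, is the degree-unconstrained source of the existence half of Theorem \ref{3half-deg3}(i) and in fact needs only $q<\frac p2+1$ and non-compactness), and the large-mass regime by Theorem \ref{3half-deg2}. Nothing further is required.
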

This marks a sharp difference with respect to star graphs, for which ground states at large masses never exist in the regime $q<\f p2+1$. Moreover, let us also stress the fact that the role of vertices of degree 2 is new and peculiar of the doubly nonlinear problem we are considering. Indeed, it is well--known that vertices of degree 2 are completely inessential when dealing with standard nonlinearities only.

On the other hand, Theorem \ref{2half} unravels a somewhat surprising new phenomenon, as the presence of exactly two half--lines is enough to guarantee that ground states at small masses always exist, independently of $p,q$. Even though it is not evident, also in this case the argument of the proof will show that such a phenomenon is rooted in the behaviour of ground states on the real line. The main idea underpinning this result is that, when the mass is sufficiently small, the doubly nonlinear problem does not feel any difference between a single delta concentrated at a point or finitely many of them located on a given compact core. Again, combining Theorem \ref{2half} with Theorem \ref{3half-deg3}(ii) and Theorem \ref{3half-deg2} has the next direct consequences, highlighting once more the rich structure of the problem.
\begin{corollary}
	\label{cor:2half3deg}
	Let $\G$ be a non--compact graph satisfying Assumption (H), with exactly $2$ half--lines and no vertex of degree $2$ (e.g. Figure \ref{fig:cor}(B)), and let $q>\f{p}{2}+1$. Then there exist $\widehat{\mu}_{p,q}:=\widehat{\mu}_{p,q}(p,q,\G)>0$, $\overline{\mu}_{p,q}:=\overline{\mu}_{p,q}(p,q,\G)>0$ such that ground states of $F_{p,q}$ at mass $\mu$ exist both if $\mu\le\widehat{\mu}_{p,q}$ and if $\mu> \overline{\mu}_{p,q}$.
\end{corollary}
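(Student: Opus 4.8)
The plan is to derive the statement purely by combining two results already available, one controlling the small--mass regime and one controlling the large--mass regime, so that the argument is essentially a bookkeeping of prior theorems.

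For the small--mass part I would invoke Theorem \ref{2half} directly. Since $\G$ has exactly two half--lines, that theorem immediately supplies a threshold $\widehat{\mu}_{p,q}>0$ such that ground states of $F_{p,q}$ at mass $\mu$ exist for every $\mu\le\widehat{\mu}_{p,q}$. No further work is required here, and this half of the conclusion is insensitive to the position of $q$ relative to $\f p2+1$; it uses only the hypothesis of exactly two half--lines.

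For the large--mass part I would use the existence statement of Theorem \ref{3half-deg3}(ii). The delicate point is that Theorem \ref{3half-deg3} is stated under the hypotheses of at least three half--lines and all vertices of degree at least three, whereas the graph at hand has exactly two half--lines and no vertex of degree two. However, as remarked right after Theorem \ref{3half-deg3}, its existence halves---isolated in Propositions \ref{exsubdiag-smallmu}--\ref{deg3-largemu}---remain valid once those two topological hypotheses are dropped, requiring only Assumption (H). Thus, since $q>\f p2+1$ and $\G$ satisfies Assumption (H), the relevant proposition yields a threshold $\overline{\mu}_{p,q}>0$ with ground states existing for every $\mu>\overline{\mu}_{p,q}$, and putting the two regimes together gives the claimed coexistence.

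I do not expect a genuine obstacle, as the corollary is a direct consequence of results established beforehand; the only step demanding care is the verification that the large--mass existence mechanism does \emph{not} hinge on the "at least three half--lines" assumption, which is precisely what the generalized existence propositions guarantee. It is also worth stressing, in contrast with Corollary \ref{cor:3half2deg}, that Theorem \ref{3half-deg2} plays no role here: because $\G$ has no vertex of degree two, the large--mass existence is driven entirely by the regime $q>\f p2+1$ rather than by a degree--two vertex, and it is exactly this distinction that makes the large--mass behaviour in the present setting a consequence of Theorem \ref{3half-deg3}(ii) alone.
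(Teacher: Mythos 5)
Your proposal is correct and follows exactly the paper's route: Theorem \ref{2half} for $\mu\le\widehat{\mu}_{p,q}$ and the existence half of Theorem \ref{3half-deg3}(ii), i.e.\ Proposition \ref{deg3-largemu}, for $\mu>\overline{\mu}_{p,q}$. The only imprecision is that Proposition \ref{deg3-largemu} requires a vertex of degree at least $3$ rather than merely Assumption (H), but this is automatic here since a graph satisfying Assumption (H) with no vertex of degree $2$ (and not isomorphic to the line) must contain such a vertex.
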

\begin{corollary}
	\label{cor:2half2deg}
	Let $\G$ be a non--compact graph with exactly $2$ half--lines and at least a vertex of degree $2$ (e.g. Figure \ref{fig:cor}(C)). Then there exist $\widetilde{\mu}_{p,q}:=\widetilde{\mu}_{p,q}(p,q,\G)>0$, $\widehat{\mu}_{p,q}:=\widehat{\mu}_{p,q}(p,q,\G)>0$ such that ground states of $F_{p,q}$ at mass $\mu$ exist both if $\mu\le\widehat{\mu}_{p,q}$ and if $\mu\ge \widetilde{\mu}_{p,q}$.
\end{corollary}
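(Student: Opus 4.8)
The plan is to recognize that this corollary is an immediate consequence of the two existence results already established, namely Theorem~\ref{2half} and Theorem~\ref{3half-deg2}, since the class of graphs considered here simultaneously satisfies the hypotheses of both. Indeed, the two defining features of $\G$ in the statement---having exactly two half-lines and having at least one vertex of degree $2$---are precisely the topological conditions invoked in those two theorems, respectively, and each of these conditions acts independently, with no mutual interference.

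First I would invoke Theorem~\ref{2half}. Since $\G$ has exactly two half-lines, that theorem directly furnishes a threshold $\widehat{\mu}_{p,q}=\widehat{\mu}_{p,q}(p,q,\G)>0$ such that ground states of $F_{p,q}$ at mass $\mu$ exist for every $\mu\le\widehat{\mu}_{p,q}$, thereby settling the small-mass regime. Next I would invoke Theorem~\ref{3half-deg2}: since $\G$ possesses at least one vertex of degree $2$, that theorem directly furnishes a threshold $\widetilde{\mu}_{p,q}=\widetilde{\mu}_{p,q}(p,q,\G)>0$ such that ground states of $F_{p,q}$ at mass $\mu$ exist for every $\mu\ge\widetilde{\mu}_{p,q}$, settling the large-mass regime. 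Concatenating the two conclusions yields exactly the asserted existence for $\mu\le\widehat{\mu}_{p,q}$ and for $\mu\ge\widetilde{\mu}_{p,q}$.

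There is no genuine obstacle to overcome here, and in particular no hidden compatibility condition to verify. As emphasized in the discussion preceding the statement, neither Theorem~\ref{2half} nor Theorem~\ref{3half-deg2} requires Assumption~(H), so both apply to the present graphs without further restriction; moreover, the corollary makes no claim whatsoever about existence (or non-existence) in the intermediate range $(\widehat{\mu}_{p,q},\widetilde{\mu}_{p,q})$, so one need not relate or order the two thresholds. Consequently the proof reduces to citing the two theorems and noting that their hypotheses hold, which completes the argument.
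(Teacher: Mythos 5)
Your argument is correct and coincides with the paper's: the corollary is obtained exactly by combining Theorem \ref{2half} (exactly two half--lines gives the small--mass threshold $\widehat{\mu}_{p,q}$) with Theorem \ref{3half-deg2} (a vertex of degree $2$ gives the large--mass threshold $\widetilde{\mu}_{p,q}$), and you rightly note that neither theorem needs Assumption (H) and that no relation between the two thresholds has to be checked.
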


\begin{figure}[t]
	\centering
	\subfloat[][A graph satisfying Assumption (H) with at least $3$ half--lines and at least a vertex with degree $2$.]{
	\includegraphics[width=0.4\textwidth]{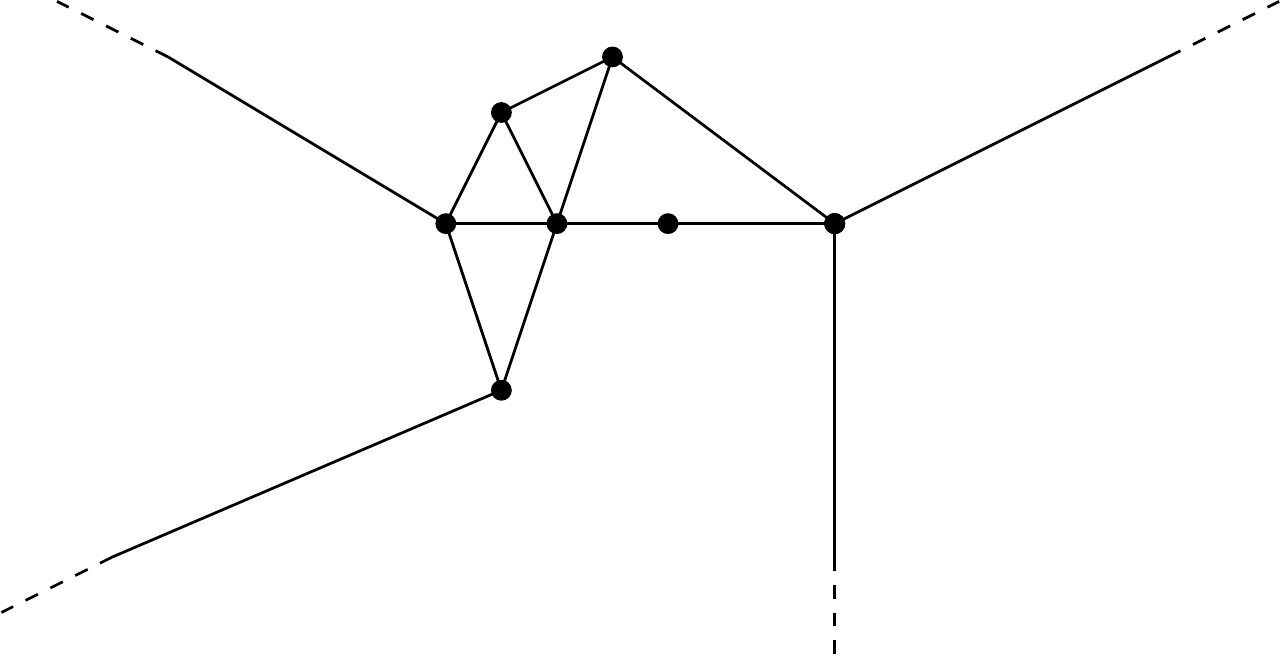}}
	\quad
	\subfloat[][A graph satisfying Assumption (H) with exactly $2$ half--lines and no vertex with degree smaller than $3$.]{
	\includegraphics[width=0.5\textwidth]{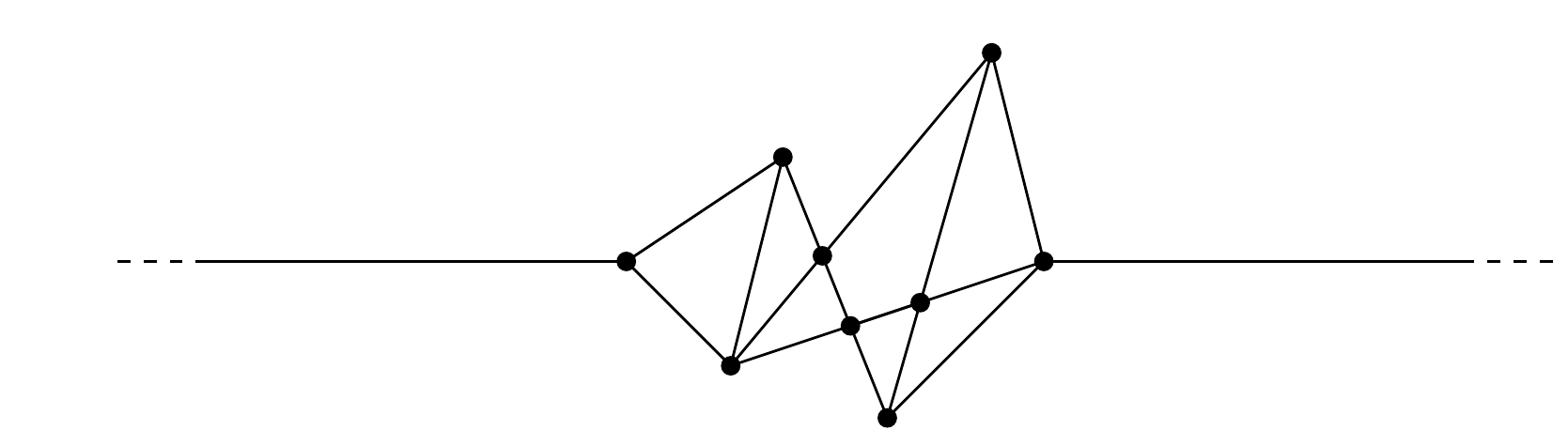}}
	
	\subfloat[][A graph with exactly $2$ half--lines and at least a vertex with degree $2$.]{
	\includegraphics[width=0.5\textwidth]{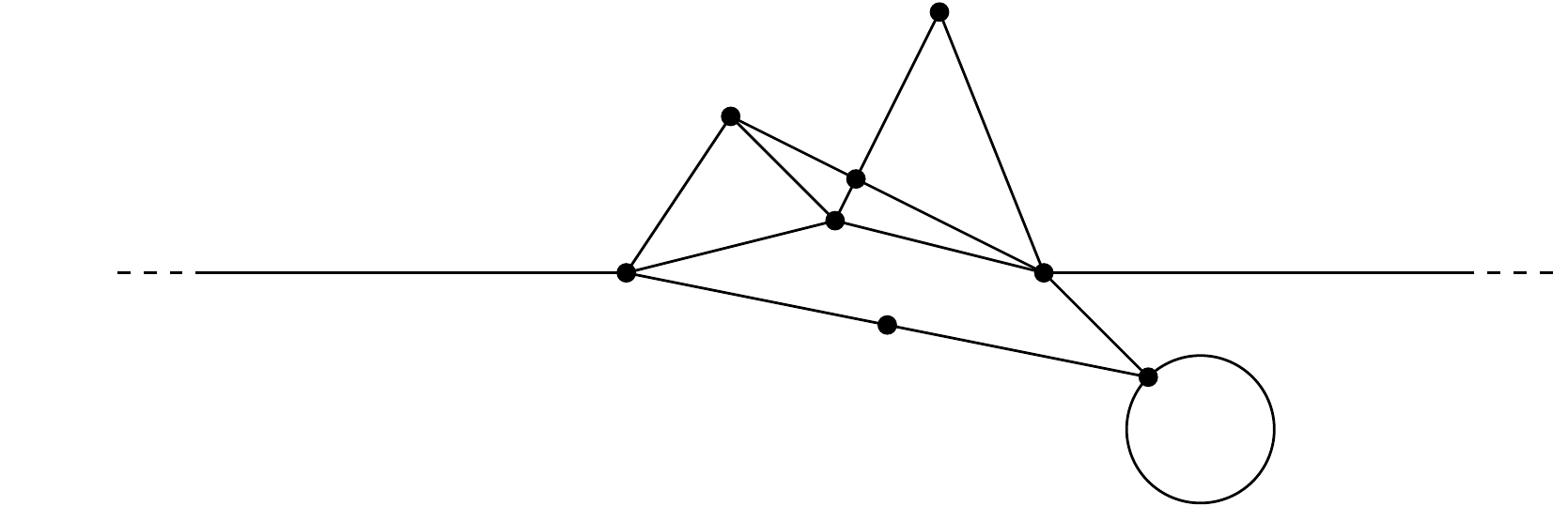}}

	\caption{Examples of graphs as in Corollaries \ref{cor:3half2deg}--\ref{cor:2half3deg}--\ref{cor:2half2deg}.}
	\label{fig:cor}
\end{figure}

The results discussed so far outline a comprehensive description of how the ground states problem on graphs with half--lines is affected by the topology of the network. However, the presence of various topological features as in Theorems \ref{3half-deg3}--\ref{3half-deg2}--\ref{2half}, each guaranteeing on its own the existence of ground states in different mass regimes, raises new questions. For instance, Corollary \ref{cor:3half2deg} provides a class of graphs where small masses ground states exist because of the similarity between general non--compact graphs and star graphs, whereas ground states at large masses exist due to vertices with degree 2, that make the problem on graphs resembles the corresponding one on the line. This seems to suggest that the thresholds $\underline{\mu}_{p,q}$, $\widetilde{\mu}_{p,q}$ are rooted in two unrelated properties of the graph. It is then natural to wonder if a general relation between these two values holds for every graph in Corollary \ref{cor:3half2deg} or if it depends on further properties of the graph. Analogous problems are posed by Corollary \ref{cor:2half3deg} and Corollary \ref{cor:2half2deg}. 

To partially answer this kind of questions, we have the following results.

\begin{theorem}
	\label{thm:metric1}
	Let $q<\f p2+1$. Then
	\begin{itemize}
		\item[(i)] there exists a graph $\G^1$ satisfying Assumption (H), with at least $3$ half--lines and at least a vertex of degree $2$, so that ground states of $F_{p,q}$ at mass $\mu$ exist for every $\mu>0$;
		\item[(ii)] there exists a graph $\G^2$ satisfying Assumption (H), with at least $3$ half--lines and at least a vertex of degree $2$, and a value $m>0$ so that ground states of $F_{p,q}$ at mass $m$ on $\G^2$ do not exist. 
	\end{itemize}
\end{theorem}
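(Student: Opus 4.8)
The plan is to realize both graphs inside a single one–parameter metric family and let one length $\ell>0$ toggle between the two behaviours. Throughout I use the compactness criterion of Corollary \ref{compactcor}: on a graph $\G$ satisfying Assumption (H) one has $\ee(\mu,\G)=\ee(\mu,\R)$, hence $\F_{p,q}(\mu,\G)\le\ee(\mu,\R)$ for every $\mu$, and a ground state exists if and only if this inequality is strict, non--existence being equivalent to $\F_{p,q}(\mu,\G)=\ee(\mu,\R)$. Let $S_3$ be the $3$--star (center $v_1$, three half--lines), for which \cite{ABD} gives, since $q<\f p2+1$, a critical mass $\underline{\mu}_{S_3}>0$ below which ground states exist and above which they do not. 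Fix one half--line of $S_3$ and mark on it a point $v_2$ at distance $\ell$ from $v_1$: the resulting graph $\G_\ell$ satisfies Assumption (H), has $3$ half--lines, and has the degree--$2$ vertex $v_2$, for every $\ell>0$. As a metric space $\G_\ell$ coincides with $S_3$, while its functional carries an extra pointwise term at $v_2$; consequently
\[
F_{p,q}(u,\G_\ell)=F_{p,q}(u,S_3)-\tfrac1q|u(v_2)|^q\le F_{p,q}(u,S_3),\qquad\text{so}\qquad \F_{p,q}(\mu,\G_\ell)\le\F_{p,q}(\mu,S_3).
\]
I will take $\ell$ large for $\G^1$ and $\ell$ small for $\G^2$.

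For part (i) I set $\G^1=\G_\ell$ with $\ell$ large and split the mass axis. For small masses, existence on $S_3$ together with the displayed monotonicity gives $\F_{p,q}(\mu,\G^1)<\ee(\mu,\R)$ for all $\mu\le\underline{\mu}_{S_3}$, with $\underline{\mu}_{S_3}$ independent of $\ell$. For $\mu\ge\mu_0$, where $\mu_0\in(0,\underline{\mu}_{S_3})$ is fixed, I use a test function: the concatenation of the half--line at $v_2$, the edge $e$ and one half--line at $v_1$ is an isometric copy of $\R$ through $v_2$, on which I place the line--with--one--delta minimizer $\psi_\mu$ of \cite{BD} peaked at $v_2$, extended continuously along the remaining half--line at $v_1$. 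Since $\psi_\mu$ has an exponential tail with rate bounded below for $\mu\ge\mu_0$, the extension cost is $O(\psi_\mu(\ell)^2)=O(e^{-c\ell})$ uniformly in $\mu\ge\mu_0$, whereas $g(\mu):=\ee(\mu,\R)-\F_{p,q}(\mu,\R)$ (the gain on the line with one delta) is bounded below on $[\mu_0,\infty)$, being continuous, positive by \cite{BD}, and diverging as $\mu\to\infty$. Hence for $\ell$ large the test function yields $\F_{p,q}(\mu,\G^1)<\ee(\mu,\R)$ for every $\mu\ge\mu_0$, and together with the small--mass range this gives existence for all $\mu>0$.

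For part (ii) I set $\G^2=\G_\ell$ with $\ell$ small and fix a mass $m$. Since $q<\f p2+1$, the large--mass non--existence on stars of \cite{ABD} persists for a pointwise term of any fixed amplified strength; choosing $m$ above the corresponding threshold produces a constant $k$ with
\[
E(u,\G_\ell)\ge\ee(m,\R)+\tfrac kq|u(v_1)|^q\qquad\text{for all }u\in H^1_m(\G_\ell).
\]
I then claim $\F_{p,q}(m,\G^2)=\ee(m,\R)$, i.e. the extra delta at $v_2$ produces no gain. Writing $a=|u(v_1)|$ and $b=|u(v_2)|$, the target bound $E(u,\G_\ell)-\tfrac1q a^q-\tfrac1q b^q\ge\ee(m,\R)$ is established in two regimes. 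When $a$ and $b$ are comparable, the displayed inequality with $k$ large enough absorbs $\tfrac1q b^q$. When $b\gg a$, the drop from $b$ to $a$ across the short edge forces $\tfrac12\|u'\|_{L^2(e)}^2\gtrsim b^2/\ell$, a kinetic cost that dominates the gain $\tfrac1q b^q$ once $\ell$ is small, by the a priori $L^\infty$ bound on mass--$m$ functions of negative energy. Combining the two regimes gives $\F_{p,q}(m,\G^2)=\ee(m,\R)$, whence no ground state exists at mass $m$ on $\G^2$.

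The only delicate point is the non--existence lower bound in part (ii). The soft limit $\ell\to0$, along which $\G_\ell$ degenerates to $S_3$ carrying a doubled delta, only yields $\F_{p,q}(m,\G_\ell)\to\ee(m,\R)$, i.e. a \emph{vanishing} gain, whereas non--existence on an (H)--graph requires the gain to be \emph{exactly} zero for a fixed small $\ell$. The main obstacle is therefore the quantitative argument that the pointwise gain at the degree--$2$ vertex $v_2$ is fully compensated—either by the splitting cost at the degree--$3$ vertex $v_1$ through the amplified--star inequality, or by the kinetic cost on the short edge—uniformly over all competitors at the chosen mass; this is where the new ideas of the non--existence analysis enter. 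Everything else (the compactness criterion of Corollary \ref{compactcor}, the star results of \cite{ABD}, and the line estimates of \cite{BD}) is used as a black box.
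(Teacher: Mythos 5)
Your part (i) is essentially sound and takes a genuinely different route from the paper: you place the degree--$2$ vertex at distance $\ell$ along a half--line of $S_3$ and test with the line--with--delta minimizer of \cite{BD} peaked there, whereas the paper (Proposition \ref{prop:metrex}) works with a $4$--half--line graph and truncated solitons centred at the degree--$2$ vertex; both reduce to Corollary \ref{compactcor}, and your version buys a shorter construction at the price of having to check that the exponential decay rate of $\psi_\mu$ is uniform for $\mu\ge\mu_0$ (true, but it needs a line). Note also that for the small--mass range you need not pass through $S_3$ at all: Proposition \ref{exsubdiag-smallmu} applies verbatim to your $\G_\ell$ with a threshold independent of $\ell$.

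Part (ii) has a genuine gap. First, a logical point: non--existence is \emph{not} equivalent to $\F_{p,q}(m,\G)=\ee(m,\R)$; Proposition \ref{compactth} gives only one implication, and a minimizer $u$ with $F_{p,q}(u,\G)=\ee(m,\R)$ is a priori compatible with $E(u,\G)>\ee(m,\R)$ as long as some $u(\vv)\neq0$, so you must prove the \emph{strict} bound $E(u,\G_\ell)-\f1q a^q-\f1q b^q>\ee(m,\R)$ for every competitor. Second, and more seriously, your two regimes do not combine. In the regime $b\gg a$ you want to set the kinetic cost $\f{(b-a)^2}{2\ell}$ on the short edge against the gain $\f1q b^q$, but that kinetic energy is already consumed by the amplified--star inequality $E(u,\G_\ell)>\ee(m,\R)+\f kq a^q$, which uses the full energy on $S_3$: you cannot count it twice. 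The natural repair --- cut out the edge $e$ and bound the energy of the remainder from below by $\ee(\cdot,\R)$ --- fails, because after removing $e$ the component attached to $v_2$ is a half--line with a free endpoint, on which the infimum of $E$ at mass $\nu$ is $2^{2\beta}\ee(\nu,\R)<\ee(\nu,\R)$ (half a soliton sitting at the free end); and in the regime where $a$ and $b$ are both small but not comparable, neither the amplified term $\f kq a^q$ nor any quantitative excess of $E(u)$ over $\ee(m,\R)$ is available to absorb $\f1q b^q$. This is precisely the difficulty the paper circumvents in Proposition \ref{prop:metrnonex1} by a compactness argument instead of a pointwise estimate: assume ground states $u_\ell$ exist for every small $\ell$, obtain uniform $H^1$ bounds from $F_{p,q}(u_\ell,\G_\ell)\le-\theta_p m^{2\beta+1}$, observe that $|u_\ell(v_1)-u_\ell(v_2)|\le\sqrt{\ell}\,\|u_\ell'\|_2\to0$, and pass to the limit to produce a function in $H^1_m$ of the limiting star carrying the \emph{summed} delta (in your setting $F_{p,q,2}$ on $S_3$; in the paper's, $F_{p,q,3}$ on $S_4$) at a level $\le-\theta_p m^{2\beta+1}$, contradicting Proposition \ref{exstarg} once $m>\mu^{*}(p,q,2,3)$. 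Your graph works for this; your method, as written, does not.
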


\begin{theorem}
	\label{thm:metric2}
	Let $q>\f p2+1$. Then there exists a graph $\G$ satisfying Assumption (H), with exactly $2$ half--lines and no vertex of degree $2$, and a value $m>0$ so that ground states of $F_{p,q}$ at mass $m$ on $\G$ do not exist. 
\end{theorem}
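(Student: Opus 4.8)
The plan is to produce $\G$ as a long, non--symmetric chain and to realise the non--existence as a loss of compactness toward infinity: for a suitable intermediate mass $m$ the level $\F_{p,q}(m,\G)$ will coincide with the energy $\ee(m,\R)$ of a soliton escaping along a half--line, without being attained. First I would fix the topology and metric, taking $\G$ to be the two--half--line, non--tower counterpart of the graph behind Theorem \ref{thm:metric1}(ii): a chain of small identical cells, each with all vertices of degree at least $3$ and arranged so that $\G$ is neither the line nor a tower of bubbles (for instance a string of triangles, or of $4$--cycles with a diagonal), with one half--line attached at each of the two extremal vertices. Such a $\G$ has exactly two half--lines, no vertex of degree $2$ and satisfies Assumption (H); it carries two independent metric scales, a fixed cell size $\eta$ and a number of cells $n$, hence a core length $L\sim n\eta$ that can be made as large as we please.

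For the reduction I would use the universal lower bound of \cite{AST}, namely $E(u,\G)\ge\ee(m,\R)$ for every $u\in H^1_m(\G)$. Writing the energy as $F_{p,q}(u,\G)=E(u,\G)-\f1q\sum_{\vv\in V}|u(\vv)|^q$, the non--existence at mass $m$ then reduces to the strict pointwise domination
\[
\f1q\sum_{\vv\in V}|u(\vv)|^q < E(u,\G)-\ee(m,\R)\qquad\text{for every }u\in H^1_m(\G),
\]
to be paired with the trivial bound $\F_{p,q}(m,\G)\le\ee(m,\R)$ obtained by letting a soliton $\phi_m$ run to infinity along a half--line, where the pointwise terms are not felt. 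Indeed, the displayed inequality forces $\F_{p,q}(m,\G)=\ee(m,\R)$ with the infimum approached only in the escaping limit, so that no minimiser exists. The role of the two scales is to place $m$ strictly between the two existence regimes: one tunes $L$ so that the soliton width $m^{-\beta}$ (see \eqref{eq:phimu}--\eqref{alfabeta}) satisfies $\eta\ll m^{-\beta}\sim L$, so that the competing profile can neither be essentially flat over the whole core (the mechanism behind the small--mass existence of Theorem \ref{2half}) nor concentrate within a single cell at one vertex (the large--mass mechanism of Propositions \ref{exsubdiag-smallmu}--\ref{deg3-largemu}).

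The heart of the argument, and the step I expect to be the main obstacle, is precisely this domination. On the one hand, the excess $E(u,\G)-\ee(m,\R)$ is strictly positive for every $u\in H^1_m(\G)$ exactly because $\G$ is an (H) graph different from the line and the towers of bubbles, so that no soliton fits on it without paying extra kinetic energy at the degree $\ge 3$ vertices; on the other hand, the pointwise gain $\f1q\sum_{\vv\in V}|u(\vv)|^q$ must be controlled in terms of this same excess. Making the two quantities comparable, and then checking that at the chosen $m$ the excess strictly dominates the gain, requires a quantitative refinement of the rearrangement/graph--surgery estimates underpinning the non--existence part of Theorem \ref{3half-deg3}, carefully calibrated against $\eta$ and $L$. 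The regime $q>\f p2+1$ enters here in an essential way, since it is equivalent to $\alpha q>2\beta+1$: this is exactly the scaling under which the $L^q$ pointwise gain is subdominant with respect to the kinetic excess in the intermediate window where the soliton resolves the long core, while it overwhelms the excess at large masses (concentration) and the excess itself collapses at small masses (flat profile), consistently with the existence statements above. Taking $n$ large opens a nonempty window of such intermediate masses, and any $m$ inside it is a mass at which ground states of $F_{p,q}$ on $\G$ fail to exist, as claimed; this is the mirror image, in the regime $q>\f p2+1$, of the construction carried out for Theorem \ref{thm:metric1}(ii).
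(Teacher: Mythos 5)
There is a genuine gap here: your whole argument funnels into the uniform strict inequality
$\f1q\sum_{\vv\in V}|u(\vv)|^q<E(u,\G)-\ee(m,\R)$ for \emph{every} $u\in H^1_m(\G)$, and you then identify this as ``the main obstacle'' and leave it unproved, gesturing at ``a quantitative refinement of the rearrangement/graph--surgery estimates'' calibrated against $\eta$ and $L$. But that inequality \emph{is} the theorem (it immediately forces $\F_{p,q}(m,\G)=\ee(m,\R)$ to be unattained), so nothing has actually been established. The tools you invoke do not obviously deliver it: Lemmas \ref{rearr1}--\ref{rearr-halflines} produce comparison functions on $S_3$ or on $\R$ with smaller energy, i.e.\ lower bounds for $E(u,\G)$ in terms of $\ee$, but they offer no mechanism for dominating the vertex sum $\sum_{\vv}|u(\vv)|^q$ by the kinetic excess uniformly over $H^1_m(\G)$; note moreover that in your construction the number of vertices grows with the number of cells $n$, so the gain term is not even a priori small. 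Your scaling heuristic ($\alpha q>2\beta+1$ making the pointwise gain subdominant in an intermediate window) is a plausible guide but is never converted into an estimate, and the window itself is only asserted to be nonempty.

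For comparison, the paper takes a structurally different and ultimately easier route. It fixes $m<\mu^*(p,q,1,3)$, the small--mass non--existence threshold on $S_3$ from Proposition \ref{exstarg}(ii) --- this is where $q>\f p2+1$ really enters --- and considers $\G_k$ with three parallel unit edges joining $\vv_1$ to $\vv_2$, $k$ parallel unit edges joining $\vv_2$ to $\vv_3$, and two half--lines at $\vv_3$: the core has bounded diameter but total length tending to infinity. Assuming ground states $u_k$ exist for all $k$, the uniform $H^1$ bound forces $\|u_k\|_{L^2}\to0$ on at least one of the $k$ parallel edges, hence $u_k(\vv_2),u_k(\vv_3)\to0$; the energy then decomposes in the limit into an $S_3$--piece (strictly above the soliton level at its own mass, precisely by non--existence on $S_3$ below $\mu^*$) plus a line piece (at least the soliton level at its mass), and superadditivity of $\mu\mapsto\mu^{2\beta+1}$ gives $\liminf_k F_{p,q}(u_k,\G_k)>-\theta_p m^{2\beta+1}$, contradicting Corollary \ref{compactcor}. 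No uniform pointwise domination over all competitors is ever needed. A secondary issue with your setup: the suggested cells (a string of triangles, or $4$--cycles with a single diagonal) generically contain degree--$2$ vertices, so the topology would need more care to satisfy the hypotheses of the theorem.
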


\begin{theorem}
	\label{thm:metric3}
	There exists a graph $\G$, with exactly $2$ half--lines and at least a vertex of degree $2$, and a value $m>0$ so that ground states of $F_{p,q}$ at mass $m$ on $\G$ do not exist.
\end{theorem}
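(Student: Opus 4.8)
The plan is to exhibit a suitable graph $\G$ with exactly two half--lines and a single vertex $\ww$ of degree $2$, and to tune its metric together with the mass $m$ so that $\F_{p,q}(m,\G)$ is reached only in the limit of mass escaping to infinity. Concretely, I would take $\G$ to be a ``bubble with two antennae'': two vertices $\vv_1,\vv_2$ of degree $3$ joined by two arcs, a short one of length $\ell$ carrying the degree--$2$ vertex $\ww$ at its midpoint and a long one, with a half--line attached at each of $\vv_1,\vv_2$. This graph has exactly two half--lines and one vertex of degree $2$ (and in fact satisfies Assumption (H), so it belongs to the class of Corollary \ref{cor:2half2deg}). The free quantities are the two arc lengths and $m$.

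The first step is to pin down the competing energy level. Translating a standard soliton $\phi_m$ out to infinity along a half--line makes all vertex values vanish, so the delta terms disappear in the limit and one obtains the upper bound
\begin{equation*}
\F_{p,q}(m,\G)\le \ee(m,\R).
\end{equation*}
Moreover, writing $\ee(m,\R)=-C_p m^{\f{p+2}{6-p}}$ for a positive constant $C_p$, one has $\ee(m_1+m_2,\R)<\ee(m_1,\R)+\ee(m_2,\R)$, so keeping the escaping mass in a single soliton is optimal and $\ee(m,\R)$ is exactly the level reached by any sequence letting all the mass run away. The whole point is therefore to choose $\ell$ and $m$ so that no genuine function on $\G$ can do strictly better, namely so that
\begin{equation*}
\F_{p,q}(m,\G)=\ee(m,\R)\quad\text{is not attained.}
\end{equation*}

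The core of the argument is the \emph{strict} matching lower bound $F_{p,q}(u,\G)>\ee(m,\R)$ for every $u\in H^1_m(\G)$. To beat $\ee(m,\R)$ one must extract a net gain from the focusing vertex terms, and this can only happen if $u$ concentrates mass near some vertex. I would rule this out at a suitably chosen \emph{intermediate} mass by playing the two existence mechanisms of Theorems \ref{2half} and \ref{3half-deg2} against each other. Choosing the short arc length $\ell$ small prevents the soliton from developing around the degree--$2$ vertex $\ww$, so the line--like mechanism responsible for large--mass existence (which would recover $\F_{p,q}(m,\R)<\ee(m,\R)$) is unavailable below the corresponding threshold $\widetilde{\mu}_{p,q}(\ell)$; choosing $m$ large enough neutralises the small--mass mechanism of Theorem \ref{2half}, which rests on the delta structure of the compact core. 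Quantitatively, I would bound the delta gain $\f1q\sum_{\vv}|u(\vv)|^q$ by the local mass and kinetic energy of $u$ near the vertices through a trace/Gagliardo--Nirenberg inequality on the short edges, and compare the resulting functional with the pure standard--nonlinearity problem. The branching at $\vv_1,\vv_2$ forces a strictly positive kinetic surplus over the line whenever mass is kept near those vertices, and for $\ell$ small and $m$ in the intermediate window $(\widehat{\mu}_{p,q},\widetilde{\mu}_{p,q}(\ell))$ this surplus dominates every available delta gain; hence each finite configuration stays strictly above $\ee(m,\R)$.

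Finally, non--existence follows: if a minimizer $u$ existed it would realise $F_{p,q}(u,\G)=\ee(m,\R)$, contradicting the strict lower bound just established; equivalently, a concentration--compactness analysis of a minimising sequence shows that all the mass must escape along a half--line, which is incompatible with $u\in H^1_m(\G)$. I expect the main obstacle to be precisely this sharp lower bound with strictness: because the delta nonlinearities are focusing they tend to push the energy \emph{below} the line level, so one must quantitatively trap the vertex contributions using the metric, and---most delicately---do so at a single intermediate mass at which \emph{both} the small--mass and the large--mass existence mechanisms are simultaneously defeated. This is the step where the interplay between metric and mass, which is the very content of the theorem, has to be made precise.
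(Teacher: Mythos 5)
Your overall framing is correct: since $\F_{p,q}(m,\G)\le\ee(m,\R)$ always holds, non--existence at mass $m$ is equivalent to the strict inequality $F_{p,q}(u,\G)>\ee(m,\R)$ for every $u\in H^1_m(\G)$. But the proposal never proves this inequality --- it is exactly the hard content of the theorem, and the paragraph that is supposed to deliver it (``the branching at $\vv_1,\vv_2$ forces a strictly positive kinetic surplus \dots which dominates every available delta gain'') is a heuristic, not an argument. Worse, the tools the paper has for converting branching into a quantitative kinetic surplus (Lemmas \ref{rearr1} and \ref{rearr-halflines}, used in Propositions \ref{noex-smallmu}--\ref{noex-largemu}) require at least $3$ half--lines, precisely because a value attained only twice rearranges onto $\R$ with no surplus at all; on your graph, which has exactly $2$ half--lines, the levels below $\min_{\vv\in V^+}u(\vv)$ are attained only twice, so any surplus must be extracted from the compact core, and nothing in your proposal quantifies it. It is also not established --- and not obvious --- that your ``bubble with two antennae'' admits any mass at which non--existence holds, nor that the window $(\widehat{\mu}_{p,q},\widetilde{\mu}_{p,q}(\ell))$ is nonempty and contains a mass at which both existence mechanisms fail simultaneously; the paper itself stresses that the interplay between these thresholds is a delicate metric issue it cannot resolve in general.

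The paper avoids the direct lower bound altogether. It starts from the graph $\G_k$ of Theorem \ref{thm:metric2} --- two half--lines issuing from a single vertex and a compact core made of many parallel unit edges, whose large total length relative to its diameter is what forces the vertex values of any would--be minimizer to vanish --- and inserts a degree--$2$ vertex via two edges of length $l$ between that core and the half--lines. Non--existence on the modified graph is then obtained by contradiction and compactness as in Proposition \ref{prop:metrnonex1}: if ground states $u_l$ at the mass $m$ of Theorem \ref{thm:metric2} existed for every small $l$, they would produce in the limit $l\to0$ a competitor on $\G_k$ with $F_{p,q}\le-\theta_p m^{2\beta+1}$, contradicting Theorem \ref{thm:metric2}. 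To salvage your approach you should either adopt this limiting strategy (which requires a limit graph already known to admit no ground state at some mass --- your bubble graph is not known to be one) or actually prove the quantitative lower bound, which would amount to redoing a substantial part of Section \ref{sec:nonex} in a setting where its rearrangement lemmas do not apply.
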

Theorems \ref{thm:metric1}--\ref{thm:metric2}--\ref{thm:metric3} exploit the key role of the metric of the graph. On the one hand, Theorem \ref{thm:metric1} is proved by considering graphs with exactly one vertex with degree 2 (see Figure \ref{fig:metr1}) and investigating the dependence of ground states at prescribed mass on the length of the edges emanating from this vertex. It turns out that ground states always exist when these two edges are sufficiently long (Proposition \ref{prop:metrex}), whereas there are masses at which they do not exist when the edges emanating from the vertex of degree 2 are too short (Proposition \ref{prop:metrnonex1}). On the other hand, the proof of Theorem \ref{thm:metric2} is based on graphs with exactly 2 half--lines and whose compact core has total length way larger than its diameter (see Figure \ref{fig:metr2}). Theorem \ref{thm:metric3} then combines these features, since to exhibit a graph as in the statement of the theorem (Figure \ref{fig:metr3}) we add a vertex of degree 2 with two sufficiently short edges to the graph provided by Theorem \ref{thm:metric2}. 

Let us highlight that Theorem \ref{thm:metric1} is somewhat complete, showing that under the hypotheses of Corollary \ref{cor:3half2deg} there are both graphs for which ground states exist for every mass (as on the real line) and graphs where non--existence occurs at certain masses (akin to star--graphs). Conversely, Theorems \ref{thm:metric2}--\ref{thm:metric3} provide a partial information only, since at present we are not able to exhibit graphs fulfilling the hypotheses of Corollary \ref{cor:2half3deg} or Corollary \ref{cor:2half2deg} for which ground states always exist. Actually, it is not even clear to us whether graphs like this do exist. Roughly, the problem is the following. On the one hand, to keep the threshold $\widehat{\mu}_{p,q}$ in Corollaries \ref{cor:2half3deg}--\ref{cor:2half2deg} bounded away from zero, one would need the total length of the compact core to be not too large. On the other hand, to tune $\overline{\mu}_{p,q}$ of Corollary \ref{cor:2half3deg} and $\widetilde{\mu}_{p,q}$ of Corollary \ref{cor:2half2deg} to sufficiently small values, it seems to be necessary that the lengths of specific edges are large enough. It is an open problem to understand whether one can balance these conflicting features to obtain existence of ground states at every mass.

To conclude this section, we would like to briefly comment on the proof of Theorem \ref{3half-deg3}. In fact, the existence results follow easily by adapting techniques already used on star graphs. Conversely, the proof of the non--existence statement is more involved and technically demanding. In a nutshell, our argument combines the available knowledge on star graphs with a new way to exploit the natural scaling of the problem on general graphs. Here we just sketch the main idea, redirecting to Section \ref{sec:nonex} below for the details. 

On the one hand, if $u\in H_\mu^1(\G)$ is a ground state of $F_{p,q}$ at mass $\mu$, then by Corollary \ref{compactcor} below its doubly nonlinear energy satisfies
\[
F_{p,q}(u,\G)\leq\ee(\mu,\R),
\]
which gives the following upper bound on the standard energy $E$ of $u$
\begin{equation}
\label{upper}
E(u,\G)\leq\ee(\mu,\R)+\f1q\sum_{\vv\in V}|u(\vv)|^q\,.
\end{equation}
On the other hand, since we are considering graphs fulfilling Assumption (H), it is well--known \cite{AST} that for every $u\in H_\mu^1(\G)$
\[
E(u,\G)>\ee(\mu,\R)\,.
\]
The key point in the proof of the non--existence results of Theorem \ref{3half-deg3} is to improve this lower bound in a quantitative version like
\begin{equation}
\label{lower}
E(u,\G)\geq\ee(\mu,\R)+\mathcal{R}(u),
\end{equation}
for some suitable remainder term $\mathcal{R}$ depending on $u$. To obtain such a remainder, we rely on the invariance of the normalized energy $E(u,\G)/\mu^{2\beta+1}$ under the scaling
\[
\G\mapsto t^{-\beta}\G,\quad u(\cdot)\mapsto t^\alpha u(t^\beta\cdot),\quad t>0, 
\]
where $\alpha,\beta$ are as in \eqref{alfabeta} (see Remark \ref{rem:omot}). This allows to pass from the upper bound \eqref{upper} for functions at mass $\mu$ on $\G$ to the upper bound
\begin{equation}
\label{upper2}
E(w,\G_\mu)\leq\ee(m,\R)+\left(\f\mu m\right)^{\alpha q-(2\beta+1)}\f1q\sum_{\vv\in V}|w(\vv)|^q
\end{equation}
for functions $w$ (the scaled version of $u$ with $t=m/\mu$) at mass $m$ on $\G_\mu:=\f m\mu\G$, for any desired mass $m>0$. This upper bound involves a weight depending on the original mass $\mu$ and on the quantity $\alpha q-(2\beta+1)$, whose sign depends on $q$ being smaller or larger than $\f p2+1$. Our argument will then proceed as follow. We will first assume by contradiction that ground states exist in those regimes where Theorem \ref{3half-deg3} asserts non--existence. Hence, taking sequences of ground states indexed by the mass (i.e. ground states for $\mu\to0$ if $q>\f p2+1$ and ground states for $\mu\to+\infty$ if $q<\f p2+1$), we will exploit the scaling to construct sequences of functions at a prescribed mass $m$, choosing $m$ so that ground states of $F_{p,q}$ at mass $m$ do not exist on star graphs. This leaves us with a sequence of functions at mass $m$ supported on $\G_\mu$. In particular, the lengths of the bounded edges will vary according to $\mu$. This will be crucial to obtain a scaled version of the lower bound with remainder \eqref{lower} for $w$, i.e. an inequality in the form
\begin{equation*}
	E\left(w,\G_\mu\right)\geq\ee(m,\R)+\mathcal{R}(\mu,w)
\end{equation*}
that, combined with \eqref{upper2}, will provide the contradiction we seek. 

\vspace{.1cm}
The paper is organized as follows. Section \ref{sec:prel} collects some preliminary results on the doubly nonlinear ground states problem on graphs. Sections \ref{sec:ex}--\ref{sec:nonex} provide the proof of the topological results: in Section \ref{sec:ex} we consider the existence statements of Theorem \ref{3half-deg3} and Theorems \ref{3half-deg2}--\ref{2half}, whereas in Section \ref{sec:nonex} we conclude the proof of Theorem \ref{3half-deg3} with the non--existence results. Section \ref{sec:metr} discusses the role of the metric and gives the proof of Theorems \ref{thm:metric1}--\ref{thm:metric2}--\ref{thm:metric3}. Finally, Appendices \ref{sec:appA}--\ref{sec:appB} contain some technical results used here and there in the paper.

\smallskip
\textbf{Notation.} Throughout, norms will be denoted avoiding the domain of integration whenever possible (e.g. $\|u\|_p$ will stand for $\|u\|_{L^p(\G)}$), using full notation only where necessary.

\section{Preliminaries}
\label{sec:prel}
In this section we begin with some preliminary results that will be important in our discussion. 

To start with, let us introduce some notation we will stick to all along the paper.  
We recall that here a non--compact metric graph $\G=(V,E)$ is a metric graph for which both the set of vertices $V$ and the set of edges $E$ are finite, but $E$ contains both bounded and unbounded edges. As usual, each bounded edge is identified with a bounded interval, while every unbounded edge $\HH_i$, $i=1,\, \dots,N$, is identified with (a copy of) the half--line.
The set of all the bounded edges of $\G$ will be called its compact core and will be denoted by $\K$. Whenever needed, we will use $\ell:=|\K|$ for the total length of the compact core. Moreover, we will denote by $n:=\#V$ the total number of vertices of $\G$ and by $V^{+}\subseteq V$ the subset of vertices $\vv\in V$ attached to at least one unbounded edge.

\subsection{Ground states on general non--compact graphs: properties and existence criteria}
Let us first recall the following Gagliardo--Nirenberg inequalities 
\begin{align}
\|u\|_{p}^{p}&\le K_{p}\|u\|_{2}^{\f{p}{2}+1}\|u'\|_{2}^{\f{p}{2}-1},\qquad p>2\,,\label{GnpG}\\
\|u\|_{\infty}^{2}&\le K\|u\|_{2}\|u'\|_{2},\label{GninfG}
\end{align}
holding on every non--compact graph $\G$, for every $u\in H^1(\G)$. Here $K_p,K$  are positive constants depending on $p$ and $\G$ only.

Observe also that, for every $u\in H_\mu^1(\G)$, it is evident that $F_{p,q}(u,\G)=E(u,\G)-\f1q\sum_{\vv\in V}|u(\vv)|^q\leq E(u,\G)$. Hence, for every graph $\G$ and mass $\mu$ we have
\begin{equation}
\label{eq:F leq E}
\F_{p,q}(\mu,\G)\leq\ee(\mu,\G)\,.
\end{equation}
If $\G$ is a non--compact graph with half--lines, \cite[Theorem 2.2]{ASTcpde} shows that $\ee(\mu,\G)\leq\ee(\mu,\R)=E(\phi_\mu,\R)$. Moreover, according to \eqref{eq:phimu} one has
\begin{equation}
	\label{Ephimu}
	E(\phi_\mu,\R)=-\theta_p\mu^{2\beta+1},\qquad\theta_p:=-E(\phi_1,\R)>0
\end{equation}
where $\beta$ is as in \eqref{alfabeta}. Summing up, for every non--compact graph $\G$ and mass $\mu$ it always holds
\begin{equation}
	\label{FGmu}
	\F_{p,q}(\mu,\G)\leq-\theta_p\mu^{2\beta+1}\,.
\end{equation}
The next lemma provides a priori estimates for functions with energy $F_{p,q}$ sufficiently close to the ground state level $\F_{p,q}$. Similar estimates were obtained in \cite[Lemma 2.6]{ASTcpde} for the problem \eqref{eq:problemE} with the standard nonlinearity only, for every value of the mass $\mu$. In our setting, it is possible to recover these results in certain mass regimes only.
\begin{remark}
	\label{rem:diagpq}
	Since it will be frequently used in the following, we recall here that, given $\alpha,\beta$ as in \eqref{alfabeta}, the relation $\alpha q< 2\beta+1$ holds if and only if $q<\f p2+1$.
\end{remark}
\begin{lemma}
\label{limitest-norms}
Let $\G$ be a non--compact graph and $p\in(2,6)$, $q\in(2,4)$. Then there exists $\mu^*:=\mu^*(p,q,\G)>0$ so that
\begin{itemize}
	\item[(i)] for every $\mu\geq\mu^*$ if $q<\f{p}{2}+1$, or
	\item[(ii)] for every $\mu\leq\mu^*$ if $q>\f{p}{2}+1$,
\end{itemize}
if $u\in H^{1}_{\mu}(\G)$ satisfies $F_{p,q}(u,\G)\le\f{1}{2}\F_{p,q}(\mu,\G)$, then
\begin{equation}
\label{kinen-mu}
C_{p}^{-1}\mu^{2\beta+1}\le \|u'\|_2^{2} \le C_{p}\mu^{2\beta+1},
\end{equation}
\begin{equation}
\label{pnorm-mu}
C_{p}^{-1}\mu^{2\beta+1} \le \|u\|_p^{p} \le C_{p}\mu^{2\beta+1},
\end{equation}
\begin{equation}
\label{infnorm-mu}
C_{p}^{-1}\mu^{\alpha}\le \|u\|_{\infty} \le C_{p}\mu^{\alpha}
\end{equation}
for some constant $C_{p}>0$ depending on $p$ and $\G$ only.
\end{lemma}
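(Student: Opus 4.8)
The plan is to reduce both chains of inequalities to the analysis of a single real function of the normalized kinetic energy, exploiting the homogeneity of the soliton scaling. Write $n=\#V$ and introduce the normalized variable $\sigma:=\mu^{-(\beta+\frac12)}\|u'\|_2$, so that $\|u'\|_2^2=\mu^{2\beta+1}\sigma^2$. Inserting the Gagliardo--Nirenberg inequalities \eqref{GnpG} and \eqref{GninfG} (the latter raised to the power $q/2$, summed over the $n$ vertices, and combined with $\|u\|_2^2=\mu$) into the definition \eqref{Fpq1} of $F_{p,q}$, one checks by a direct computation of the exponents that
\begin{equation*}
	\tfrac1p\|u\|_p^p\le \tfrac{K_p}{p}\mu^{2\beta+1}\sigma^{\frac p2-1},\qquad \tfrac1q\sum_{\vv\in V}|u(\vv)|^q\le \tfrac{nK^{q/2}}{q}\mu^{\alpha q}\sigma^{\frac q2}.
\end{equation*}
The crucial point is that the standard nonlinearity carries the weight $\mu^{2\beta+1}$, whereas the pointwise nonlinearity carries $\mu^{\alpha q}=\mu^{\alpha q-(2\beta+1)}\,\mu^{2\beta+1}$; the two therefore differ by the factor $\mu^{\gamma}$ with $\gamma:=\alpha q-(2\beta+1)$.

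Dividing the assumption $F_{p,q}(u,\G)\le\frac12\F_{p,q}(\mu,\G)$ by $\mu^{2\beta+1}$ and using \eqref{FGmu} to bound the right-hand side by $-\frac{\theta_p}2$, I would obtain the scalar inequality
\begin{equation*}
	\tfrac12\sigma^2-\tfrac{K_p}{p}\sigma^{\frac p2-1}-\tfrac{nK^{q/2}}{q}\mu^{\gamma}\sigma^{\frac q2}\le-\tfrac{\theta_p}2.
\end{equation*}
By Remark \ref{rem:diagpq}, $\gamma<0$ exactly when $q<\frac p2+1$ and $\gamma>0$ exactly when $q>\frac p2+1$, so in either case $\mu^\gamma\to0$ along the admissible regime ($\mu\to+\infty$ in case (i), $\mu\to0$ in case (ii)). I would then fix $\mu^*$ so that $\mu^\gamma\le1$ there and compare the left-hand side with the $\mu$--independent function $g(\sigma):=\frac12\sigma^2-\frac{K_p}{p}\sigma^{\frac p2-1}-\frac{nK^{q/2}}{q}\sigma^{\frac q2}$. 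Since $\frac p2-1<2$ and $\frac q2<2$ by \eqref{eq:pq_sub}, one has $g(\sigma)\to0^-$ as $\sigma\to0^+$ and $g(\sigma)\to+\infty$ as $\sigma\to+\infty$, so the sublevel set $\{g\le-\frac{\theta_p}2\}$ is a compact interval $[\sigma_-,\sigma_+]\subset(0,+\infty)$; as the left-hand side dominates $g$ when $\mu^\gamma\le1$, the admissible $\sigma$ must lie in $[\sigma_-,\sigma_+]$, which is precisely the two-sided estimate \eqref{kinen-mu}.

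The remaining bounds follow from this. The upper bounds in \eqref{pnorm-mu} and \eqref{infnorm-mu} come from plugging $\|u'\|_2\le\sigma_+\mu^{\beta+\frac12}$ back into \eqref{GnpG} and \eqref{GninfG} and checking that the $\mu$--exponents produced equal $2\beta+1$ and $\alpha$ respectively. For the lower bound in \eqref{pnorm-mu} I would instead rearrange the energy assumption as $\frac1p\|u\|_p^p\ge\frac{\theta_p}2\mu^{2\beta+1}-\frac1q\sum_{\vv\in V}|u(\vv)|^q$ and absorb the pointwise term, which is at most $\frac{nK^{q/2}}{q}\sigma_+^{q/2}\mu^{\gamma}\mu^{2\beta+1}$, into the leading $\frac{\theta_p}2\mu^{2\beta+1}$ by possibly adjusting $\mu^*$ so that $\mu^\gamma$ is small enough; this yields $\frac1p\|u\|_p^p\ge\frac{\theta_p}4\mu^{2\beta+1}$. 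Finally the lower bound in \eqref{infnorm-mu} follows from the elementary interpolation $\|u\|_p^p\le\|u\|_\infty^{p-2}\|u\|_2^2=\|u\|_\infty^{p-2}\mu$ together with the lower bound just obtained, since $2\beta/(p-2)=\alpha$.

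The main obstacle is exactly the mismatch in scaling between the two nonlinearities encoded by the weight $\mu^\gamma$: unlike in the single--nonlinearity setting of \cite[Lemma 2.6]{ASTcpde}, the pointwise term cannot be controlled for all masses, and it is the sign of $\gamma$ that forces the dichotomy between large masses (case (i)) and small masses (case (ii)). Keeping track of the interplay between the threshold needed to make $\mu^\gamma\le1$ (for the kinetic estimate) and the possibly stricter one needed to make $\mu^\gamma$ genuinely negligible (for the lower bound on $\|u\|_p^p$) is the only delicate bookkeeping; all the exponent computations are routine.
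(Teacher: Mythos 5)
Your argument is correct and all the exponent computations check out: $(p+2)/4+(\beta+\tfrac12)(\tfrac p2-1)=2\beta+1$, $\tfrac q4+(\beta+\tfrac12)\tfrac q2=\alpha q$, and $2\beta/(p-2)=\alpha$ for the last step. It rests on exactly the same ingredients as the paper's proof --- the Gagliardo--Nirenberg inequalities \eqref{GnpG}--\eqref{GninfG}, the a priori bound \eqref{FGmu}, and the sign of $\gamma=\alpha q-(2\beta+1)$ from Remark \ref{rem:diagpq} --- but organizes them differently. The paper proceeds by contradiction along a subsequence of masses: it first excludes $\|u'\|_2^2/\mu_k^{2\beta+1}\to+\infty$ to get the upper bound in \eqref{kinen-mu}, deduces the upper bounds in \eqref{pnorm-mu}--\eqref{infnorm-mu}, then excludes $\|u\|_p^p/\mu_k^{2\beta+1}\to0$ to get the lower bound in \eqref{pnorm-mu}, and only at the end recovers the lower bounds in \eqref{kinen-mu} and \eqref{infnorm-mu} from \eqref{GnpG} and from $\|u\|_p^p\le\|u\|_\infty^{p-2}\mu$. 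Your comparison with the $\mu$--independent function $g$ delivers the two--sided kinetic estimate in a single stroke and replaces both contradiction arguments with direct quantitative bounds; the remaining estimates are then obtained essentially as in the paper, with the logical order of \eqref{kinen-mu} and \eqref{pnorm-mu} interchanged. Two cosmetic remarks: the sublevel set $\{g\le-\theta_p/2\}$ need not be an interval, but all you actually use is that it is compact and contained in $(0,+\infty)$ (which follows from $g\to0^-$ at $0^+$ and $g\to+\infty$ at $+\infty$), so nothing is lost; and the further tightening of $\mu^*$ needed to absorb the vertex term in the lower bound for $\|u\|_p^p$, which you correctly flag, is the exact analogue of the paper's ``up to possibly changing the value of $\mu^*$''.
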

\begin{proof}
We split the proof in two parts.

\smallskip
\noindent\textit{Part 1: $q<\f p2+1$.} We need to prove that there exists $\mu^*>0$ so that \eqref{kinen-mu}--\eqref{pnorm-mu}--\eqref{infnorm-mu} hold for every $\mu\geq\mu^*$. Let $u\in H^{1}_{\mu}(\G)$ be such that $F_{p,q}(u,\G)\le\f{1}{2}\F_{p,q}(\mu,\G)$ and consider the notation $T:=\|u'\|_{2}^{2}$, $P:=\|u\|_{p}^{p}$ and $D:=\|u\|_{\infty}^2$, so that combining with \eqref{FGmu} gives
\begin{equation}
\label{FGmu1}
\f{1}{2}T-\f{1}{p}P-\f{1}{q}\sum_{\vv\in V}|u(\vv)|^{q}\le-\f{\theta_{p}}{2}\mu^{2\beta+1},
\end{equation}
whereas \eqref{GnpG} and \eqref{GninfG} become respectively
\begin{equation}
\label{Gnp2}
P\le K_{p}\mu^{\f{p+2}{4}}T^{\f{p-2}{4}}
\end{equation}
and
\begin{equation}
\label{Gninf2}
D\le K \mu^{\f{1}{2}}T^{\f{1}{2}}.
\end{equation}
We start by proving the upper bound in \eqref{kinen-mu}. By contradiction, assume that there exists a subsequence $\mu_{k}\to+\infty$ as $k\to +\infty$ such that 
\begin{equation}
\label{Tmuinf}
\lim_{k\to+\infty}\f{T}{\mu_{k}^{2\beta+1}}=+\infty.
\end{equation}
By \eqref{FGmu1}, \eqref{Gnp2} and \eqref{Gninf2} (and recalling \eqref{alfabeta})
\begin{equation*}
\f{1}{2}T\le\f{K_p}{p}\mu_k^{\f{p+2}{4}}T^{\f{p-2}{4}}+\f{nK^{\f q2}}{q}\mu_k^{\f{q}{4}}T^{\f{q}{4}},
\end{equation*}
hence
\begin{align*}
\f{T}{\mu_k^{2\beta+1}}&\le \f{2K_p}{p}\mu_k^{\f{p+2}{4}-(2\beta+1)}T^{\f{p-2}{4}}+\f{2nK^{\f q2}}{q}\mu_k^{\f{q}{4}-(2\beta+1)}T^{\f{q}{4}}\\
&=\f{2K_p}{p}\left(\f{T}{\mu_k^{2\beta+1}}\right)^{\f{p-2}{4}}+\f{2nK^{\f q2}}{q}\left(\f{T}{\mu_k^{2\beta+1}}\right)^{\f{q}{4}}\mu_k^{\alpha q-(2\beta+1)}\,.
\end{align*}
Dividing the last inequality by $\f{T}{\mu_k^{2\beta+1}}$ then yields
\begin{equation}
\label{contrTmu}
1\le \f{2K_p}{p}\left(\f{T}{\mu_k^{2\beta+1}}\right)^{-\f{6-p}{4}}+\f{2nK^{\f q2}}{q}\left(\f{T}{\mu_k^{2\beta+1}}\right)^{-\f{4-q}{4}}\mu_k^{\alpha q-(2\beta+1)}.
\end{equation}
Since $p<6$, $q<4$ and $q<\f{p}{2}+1$, by Remark \ref{rem:diagpq} the right hand--side in \eqref{contrTmu} goes to zero as $k\to+\infty$, providing the contradiction we seek. Thus, there exists $\mu^*>0$ so that the upper bound in \eqref{kinen-mu} holds for every $\mu\geq\mu^*$. Combining with \eqref{Gnp2}--\eqref{Gninf2}, we also get for every $\mu\geq\mu^*$
\begin{align*}
P\le& K_{p}'\mu^{2\beta+1}\\
D\le& K' \mu^{\beta+1}\,,
\end{align*}
which prove the upper bounds in \eqref{pnorm-mu}--\eqref{infnorm-mu} (note that $2\alpha=\beta+1$).

As for the lower bounds, assume by contradiction that there exists a subsequence $\mu_{k}\to +\infty$ as $k\to +\infty$ such that 
\begin{equation}
\label{Vmu0}
\lim_{k\to+\infty}\f{P}{\mu_{k}^{2\beta+1}}=0.
\end{equation}
By \eqref{FGmu1} and the fact that $T\ge 0$, we already know that
\begin{equation*}
\f{1}{p}\f{P}{\mu_k^{2\beta+1}}+\f{1}{q}\f{\sum_{\vv\in V}|u(\vv)|^{q}}{\mu_k^{2\beta+1}}\ge \f{\theta_p}{2}
\end{equation*} 
and, taking advantage of the upper bound in \eqref{infnorm-mu}, this gives
\begin{equation}
\label{contrVmu}
\f{1}{p}\f{P}{\mu_k^{2\beta+1}}+\f{nC_{p}^q}{q}\mu_k^{\alpha q-(2\beta+1)}\ge \f{\theta_p}{2}.
\end{equation}
Therefore, as $\f{P}{\mu_k^{2\beta+1}}\to 0$, the left hand--side of \eqref{contrVmu} tends to zero by Remark \ref{rem:diagpq}, i.e. a contradiction. Consequently, up to possibly changing the value of $\mu^*$, the lower bound in \eqref{pnorm-mu} is proved for every $\mu\geq\mu^*$.
The lower bounds in \eqref{kinen-mu}--\eqref{infnorm-mu} follow by \eqref{Gnp2} and by the fact that $P\le ||u||_{\infty}^{p-2}\mu$ respectively.

\smallskip
\noindent\textit{Part 2: $q>\f p2+1$.} The argument is analogous to that in Part 1. We assume first by contradiction that there exists a subsequence $\mu_{k}\to 0$ as $k\to +\infty$ such that \eqref{Tmuinf} holds and show that this is impossible by Remark \ref{rem:diagpq} since $q>\f p2+1$. This is enough to prove the upper bounds in \eqref{kinen-mu}--\eqref{pnorm-mu}--\eqref{infnorm-mu} for every $\mu\leq\mu^*$, for some $\mu^*>0$. To prove the lower bounds we argue by contradiction, assuming the existence of a subsequence $\mu_{k}\to 0$ as $k\to +\infty$ such that \eqref{Vmu0} holds and adapting the previous argument to the case $q>\f p2+1$.
\end{proof}
\begin{remark}
Note that the condition $F_{p,q}(u,\G)\leq\f12\F_{p,q}(\mu,\G)$ is non--empty, as $\F_{p,q}(\mu,\G)<0$ by \eqref{FGmu} for every $\mu>0$.
\end{remark}
With the following lemma, we provide another qualitative property of doubly nonlinear ground states.
\begin{lemma}
	\label{dec-2half}
	Let $\G$ be a non--compact graph and $u\in H^{1}_{\mu}(\G)$ be a ground state of $F_{p,q}$ at mass $\mu$ on $\G$. If $\vv\in V^{+}$ is a vertex attached to $N\ge2$ half--lines, then $u$ is symmetrically decreasing on the $N$ half--lines emanating from $\vv$.
\end{lemma}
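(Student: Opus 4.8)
The plan is to exploit minimality through a rearrangement acting only on the $N$ half-lines issuing from $\vv$, leaving the rest of $\G$ untouched. First I would reduce to $u\ge0$: since the mass, the quantity $\int_\G|u|^p$ and the value $|u(\vv)|$ are unchanged when passing from $u$ to $|u|$, while $\||u|'\|_2\le\|u'\|_2$, one has $F_{p,q}(|u|,\G)\le F_{p,q}(u,\G)=\F_{p,q}(\mu,\G)$, so $|u|$ is again a ground state; moreover equality in the gradient inequality forces $u$ to have constant sign, so we may assume $u\ge0$. Write $S=\HH_1\cup\dots\cup\HH_N$ for the union of the half-lines at $\vv$, each identified with $[0,\infty)$ via $0\leftrightarrow\vv$. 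By continuity, $u(\vv)=:c$ is the common value of $u$ at the origin of every $\HH_i$, and $u\to0$ at infinity along each ray.

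Next I would introduce the symmetric decreasing rearrangement $u^\star$ of $u|_S$ on the $N$-star $S$: the unique function that is identical on all rays, nonincreasing along each ray, and equimeasurable with $u|_S$. By equimeasurability it preserves every $L^r$-norm on $S$, so replacing $u|_S$ by $u^\star$ keeps both the mass and $\int_\G|u|^p$; by a P\'olya--Szeg\H{o} inequality on the star it does not increase the kinetic energy, $\int_S|(u^\star)'|^2\le\int_S|u'|^2$; and, being decreasing, it satisfies $u^\star(\vv)=\operatorname{ess\,sup}_S u\ge c$. The decisive point for the conclusion is the equality case: $\int_S|(u^\star)'|^2=\int_S|u'|^2$ forces $u|_S$ to coincide with $u^\star$ (the rays being pinned at $\vv$, there is no translation freedom and, $u$ being non-constant and smooth on edges, equality is rigid), i.e. $u$ is already symmetric decreasing on $S$.

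The competitor is then $w:=u^\star$ on $S$ and $w:=u$ on $\G\setminus S$. Granting for a moment that $w\in H_\mu^1(\G)$, the only affected terms are the kinetic energy on $S$ and the pointwise term at $\vv$, so
\[
F_{p,q}(w,\G)-F_{p,q}(u,\G)=\f12\left(\int_S|(u^\star)'|^2-\int_S|u'|^2\right)-\f1q\left(u^\star(\vv)^q-c^q\right)\le 0,
\]
both summands being nonpositive by the P\'olya--Szeg\H{o} inequality and by $u^\star(\vv)\ge c$. Since $u$ is a ground state, each summand must vanish: in particular the two kinetic terms coincide, so by the equality case $u$ is symmetric decreasing on $S$, which is the claim (and simultaneously $u^\star(\vv)=c$).

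The main obstacle is exactly the admissibility of $w$, i.e. its continuity at $\vv$. This is automatic when $\vv$ carries no edge besides the $N$ half-lines, but when $\vv$ is also attached to bounded edges one has $w=c$ there while $w(\vv)=u^\star(\vv)$ on $S$, so $w\in H^1(\G)$ only if $u^\star(\vv)=c$, that is, only if $\max_S u$ is attained at $\vv$. I would therefore establish first that a ground state attains its maximum over $S$ at $\vv$; this is where the attractive pointwise nonlinearity enters decisively, as raising $|u(\vv)|$ lowers the energy. Concretely, on each half-line $u$ solves the Euler--Lagrange equation $u''=\omega u-u^{p-1}$ with $\omega=-\lambda>0$, so the Hamiltonian $\f12(u')^2-\f{\omega}{2}u^2+\f1p u^p$ is constant and, by decay at infinity, vanishes identically; this identifies $u$ on each ray as a soliton piece, and the vertex condition $\sum_{e\ni\vv}\partial_\nu u_e(\vv)=-u(\vv)^{q-1}<0$ combined with minimality excludes an interior peak, yielding $\max_S u=u(\vv)$ and making $w$ admissible, which closes the argument.
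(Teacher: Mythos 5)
Your overall strategy (rearrange on the star $S$ of half--lines, use equimeasurability plus a P\'olya--Szeg\H{o} inequality, and conclude via the equality case) is a reasonable template, but it has a genuine gap exactly where the lemma is hard: ruling out an interior maximum of $u$ on one of the half--lines. Two things go wrong. First, the P\'olya--Szeg\H{o} inequality for the symmetric decreasing rearrangement onto an $N$--star requires that almost every level of $u|_S$ be attained at least $N$ times. Levels $t\in(0,u(\vv))$ are indeed attained at least once on each ray, but levels $t\in(u(\vv),\max_S u)$ are attained only twice (both preimages on the single ray carrying the bump), so for $N\ge3$ the inequality $\int_S|(u^\star)'|^2\le\int_S|u'|^2$ is unjustified --- the rearrangement tends to \emph{increase} the Dirichlet energy on those levels by a factor of order $N^2/4$. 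Hence your energy comparison for the competitor $w$, and the subsequent conclusion that ``each summand must vanish'', are unavailable precisely in the configuration you need to exclude. Second, your proposed repair --- establish $\max_S u=u(\vv)$ first from the Euler--Lagrange equation and the vertex condition --- does not go through as sketched. The ODE does identify $u$ on each ray as a translate of a soliton, but the vertex condition only constrains the \emph{sum} of the outgoing derivatives of $u$ at $\vv$ to equal $-|u(\vv)|^{q-2}u(\vv)<0$: a single ray with positive outgoing derivative (an interior peak) is perfectly compatible with this as long as the remaining edges at $\vv$ compensate. The phrase ``combined with minimality'' is where all the work is hidden, and no argument is supplied.

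This missing step is exactly what the paper's proof handles with a genuine variational computation. After invoking the dichotomy of \cite[Lemma 3.4]{ABD} (the restriction of $u$ to $S$ is either symmetric and decreasing, or symmetric with a single bump ray $\HH_1$), it assumes the bump case, splits the mass between $\HH_1\cup\HH_2$ and the rest of $\G$, and constructs an admissible family $u_\nu$ by rescaling the two pieces and translating along $\HH_1$ --- a translation which restores continuity at $\vv$ precisely because $u$ is increasing at $\vv$ on the bump ray. It then shows $\tfrac{d^2}{d\nu^2}F_{p,q}(u_\nu,\G)\big|_{\nu=0}<0$, so that $F_{p,q}(u_\nu,\G)$ is strictly concave near $\nu=0$ and $u$ cannot be a global minimizer. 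Some substitute for this concavity argument (or another quantitative mechanism that penalizes the interior peak) is needed to close your proof; the rearrangement machinery alone does not do it.
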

\begin{proof}
	Clearly, when there is no vertex of $\G$ attached to more than one half--line, there is nothing to prove. Let then $\vv\in V^{+}$ be a vertex with $N$ half--lines $(\HH_{i})_{i=1}^N$ emanating from it. Note that $\bigcup_{i=1}^N\HH_{i}$ can be interpreted as a (copy of a) star graph $S_{N}$ with $N$ half--lines. Hence, arguing as in the proof of \cite[Lemma 3.4]{ABD}, if $u\in H_\mu^1(\G)$ is a ground state of $F_{p,q}$ at mass $\mu$ on $\G$, then its restriction $u_{|\bigcup_{i=1}^N\HH_{i}}$ to $\bigcup_{i=1}^N\HH_{i}$ is either symmetric with respect to $\vv$ and monotonically decreasing on each half--line, or it is symmetric with respect to $\vv$ and monotonically decreasing on $N-1$ half--lines and on the remaining one, say $\HH_{1}$, it is non--decreasing from the origin to a unique maximum point and then non--increasing on the rest of $\HH_1$. To prove the lemma, we are thus left to rule out the latter case. To this end, assume by contradiction that $u$ is as in the second case. Let $u_1$ be the restriction of $u$ to $\HH_{1}\cup\HH_{2}$ and $u_2$ be the restriction of $u$ to $\G\setminus\left(\HH_1\cup\HH_2\right)$, and set 
	\begin{equation*}
	\mu_1:=\int_{\HH_1\cup\HH_2}\abs{u_1}^2\,dx,\quad \mu_2:=\int_{\G \setminus(\HH_{1}\cup\HH_2)}\abs{u_2}^2\,dx.
	\end{equation*}
	For $\varepsilon>0$ small enough, let $\nu\in(-\varepsilon,\varepsilon)$. Since by assumption $u$ is increasing at $\vv$ on $\HH_1$, it has no maximum point at $\vv$, so it is possible to define $u_\nu:\G\to\R$
	\begin{equation*}
	u_\nu(x):=\begin{cases}
	\sqrt{\frac{\mu_1+\nu}{\mu_1}}u_1(x+T(\nu)) & x\in\HH_{1}\cup\HH_2\\
	\sqrt{\frac{\mu_2-\nu}{\mu_2}}u_2(x) & x\in \G\setminus(\HH_1\cup\HH_2)\,,
	\end{cases} 
	\end{equation*}
	in such a way that the shift $T(\nu)$ satisfies $T(0)=0$ and $u_\nu$ is continuous at $\vv$. It then follows that $u_\nu\in H_\mu^1(\G)$ for every $\nu$ and
	\begin{equation*}
	\begin{split}
	\frac{d^2}{d\nu^2}F_{p,q}(u_\nu,\G)\Big|_{\nu=0}=&-\frac{p-2}{4}\left[\f{1}{\mu_1^{2}}\int_{\HH_{1}\cup \HH_{2}}\abs{u_1}^p\,dx+\f{1}{\mu_2^{2}}\int_{\G\setminus(\HH_{1}\cup \HH_{2})}\abs{u_2}^p\,dx\right]\\
	&-\frac{q-2}{4\mu_2^{2}} \sum_{\ww\in V}\abs{u(\ww)}^q\\
	\le& -\frac{p-2}{4\mu^{2}}\int_{\G}\abs{u}^p\,dx-\frac{q-2}{4\mu_2^{2}}\sum_{\ww\in V} \abs{u(\ww)}^q<0,
	\end{split}
	\end{equation*}
	so that, choosing $\varepsilon$ small enough, we get a contradiction with the fact that $u$ is a ground state of $F_{p,q}$ at mass $\mu$ on $\G$.
\end{proof}
The next proposition establishes an existence criterion for ground states of $F_{p,q}$ at prescribed mass.
\begin{proposition}
\label{compactth}
Let $\G$ be a non--compact graph and $\mu>0$. If $\F_{p,q}(\mu,\G)<\Eps(\mu,\R)$, then ground states of $F_{p,q}$ at mass $\mu$ exist.
\end{proposition}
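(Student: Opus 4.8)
The plan is to run the direct method of the calculus of variations and to isolate the single mechanism that can break compactness, namely the escape of mass to infinity along the half--lines. Let $(u_n)\subset H^1_\mu(\G)$ be a minimizing sequence for $\F_{p,q}(\mu,\G)$. First I would show that $(u_n)$ is bounded in $H^1(\G)$: inserting \eqref{GnpG}--\eqref{GninfG} into \eqref{Fpq1}, the two negative terms are controlled by $\|u_n'\|_2^{(p-2)/2}$ and $\|u_n'\|_2^{q/2}$ with exponents $(p-2)/2<2$ and $q/2<2$ (this is exactly where both $p<6$ and $q<4$ are used), so that $F_{p,q}(\cdot,\G)$ is coercive in $\|u'\|_2$ on $H^1_\mu(\G)$ and $\|u_n'\|_2$ stays bounded. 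Up to subsequences, $u_n\deb u$ in $H^1(\G)$; by boundedness and Arzel\`a--Ascoli on each edge, $u_n\to u$ locally uniformly, whence $u_n(\vv)\to u(\vv)$ for the finitely many $\vv\in V$, and $u_n\to u$ strongly in $L^2(\K)\cap L^\infty(\K)$ on the compact core. Set $m:=\|u\|_2^2\in[0,\mu]$.

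The favourable case is $m=\mu$. Then $u_n\deb u$ together with $\|u_n\|_2\to\|u\|_2$ forces $u_n\to u$ strongly in $L^2(\G)$, and since $\|u_n-u\|_p^p\le\|u_n-u\|_\infty^{p-2}\|u_n-u\|_2^2$ with $\|u_n-u\|_\infty$ bounded, also $u_n\to u$ strongly in $L^p(\G)$. Combining the convergence of the vertex terms with the weak lower semicontinuity of $u\mapsto\|u'\|_2^2$ yields $F_{p,q}(u,\G)\le\liminf_n F_{p,q}(u_n,\G)=\F_{p,q}(\mu,\G)$; as $u\in H^1_\mu(\G)$ this is an equality and $u$ is the sought ground state.

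It thus remains to exclude $m<\mu$, and this is where the hypothesis $\F_{p,q}(\mu,\G)<\Eps(\mu,\R)$ must enter. Writing $r_n:=u_n-u$, I would combine the Brezis--Lieb lemma for $\|\cdot\|_p^p$, the weak--convergence expansion $\|u_n'\|_2^2=\|u'\|_2^2+\|r_n'\|_2^2+o(1)$, and the fact that $r_n(\vv)\to0$ (so the remainder carries no vertex contribution in the limit) to split $\F_{p,q}(\mu,\G)=F_{p,q}(u,\G)+\lim_n E(r_n,\G)$. Since $u_n\to u$ strongly on $\K$, the mass $\mu-m$ of $r_n$ asymptotically lives on the half--lines and away from every vertex; extending the restriction of $r_n$ to each half--line $\HH_i$ by $0$ past its endpoint (legitimate up to an $o(1)$ error because $r_n(\vv)\to0$) makes it an admissible competitor on $\R$, so that $E(r_n,\HH_i)\ge\Eps(\nu_{i,n},\R)+o(1)$ with $\nu_{i,n}:=\|r_n\|_{L^2(\HH_i)}^2$. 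Summing over the finitely many half--lines, using \eqref{Ephimu} and the (strict) superadditivity of $\nu\mapsto\nu^{2\beta+1}$, I obtain $\lim_n E(r_n,\G)\ge\Eps(\mu-m,\R)$, hence
\[
\F_{p,q}(\mu,\G)\ge F_{p,q}(u,\G)+\Eps(\mu-m,\R).
\]
For $m=0$ this reads $\F_{p,q}(\mu,\G)\ge\Eps(\mu,\R)$, directly contradicting the hypothesis.

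The genuine difficulty is the intermediate regime $0<m<\mu$ (the dichotomy case). Here the displayed estimate, combined with the always--true gluing bound $\F_{p,q}(\mu,\G)\le\F_{p,q}(m,\G)+\Eps(\mu-m,\R)$ (place a far--out real--line soliton of mass $\mu-m$ beside a near--minimizer at mass $m$) and with $F_{p,q}(u,\G)\ge\F_{p,q}(m,\G)$, forces the whole chain to collapse to equalities, so that $u$ would be a ground state at mass $m$ and $\F_{p,q}(\mu,\G)=\F_{p,q}(m,\G)+\Eps(\mu-m,\R)$. To contradict this I would prove the strict subadditivity $\F_{p,q}(\mu,\G)<\F_{p,q}(m,\G)+\Eps(\mu-m,\R)$: since both nonlinearities are focusing, superimposing the retained profile $u$ and the escaping soliton near a common vertex makes the interaction terms in $\int_\G|u|^p$ and in $\sum_\vv|u(\vv)|^q$ strictly lower the total energy below the sum of the two separate contributions (the monotonicity/concentration properties of Lemma \ref{dec-2half} help locate the beneficial vertex). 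I expect this strict gain on a general graph to be the main obstacle to make fully rigorous. It contradicts the saturating equality, rules out $0<m<\mu$, and therefore forces $m=\mu$, bringing us back to the favourable case and yielding existence.
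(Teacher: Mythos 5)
Your skeleton — bounded minimizing sequence, weak limit $u$ with mass $m:=\|u\|_2^2\in[0,\mu]$, compactness if $m=\mu$, contradiction with the hypothesis $\F_{p,q}(\mu,\G)<\ee(\mu,\R)$ if $m=0$ — is sound and coincides with the paper's strategy, which defers to \cite[Proposition 3.1]{ABD} and spells out in detail only the vanishing alternative, exactly as you handle it: the vertex terms and $\|u_n\|_{L^p(\K)}$ disappear, the surviving mass escapes along the half--lines, and one deduces $\F_{p,q}(\mu,\G)\geq\ee(\mu,\R)$.

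The genuine gap is in the dichotomy case $0<m<\mu$. You reduce it to the strict subadditivity $\F_{p,q}(\mu,\G)<\F_{p,q}(m,\G)+\ee(\mu-m,\R)$ and propose to prove it by superimposing the retained profile and the escaping soliton near a common vertex, but you then concede that making this rigorous on a general graph is ``the main obstacle'': as written the step is not proved, and the interaction gain is indeed delicate to quantify (the two profiles concentrate in different regions, and the cross terms in $\int_\G|u|^p$ and $\sum_{\vv}|u(\vv)|^q$ must be shown to beat the kinetic cost of gluing). The intended mechanism is much more elementary and avoids any superposition: pure mass scaling. For $\lambda\geq1$ one has $F_{p,q}(\sqrt{\lambda}\,v,\G)\leq\lambda F_{p,q}(v,\G)$, because the two focusing terms scale like $\lambda^{p/2}$ and $\lambda^{q/2}$, both $\geq\lambda$ since $p,q>2$; applied with $v=u$ and $\lambda=\mu/m$ this gives $F_{p,q}(u,\G)\geq\f{m}{\mu}\F_{p,q}(\mu,\G)$. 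Similarly, since $2\beta>0$, $\ee(\mu-m,\R)=-\theta_p(\mu-m)^{2\beta+1}>-\theta_p(\mu-m)\mu^{2\beta}=\f{\mu-m}{\mu}\ee(\mu,\R)>\f{\mu-m}{\mu}\F_{p,q}(\mu,\G)$, the last inequality by the hypothesis $\F_{p,q}(\mu,\G)<\ee(\mu,\R)$. Plugging both bounds into the lower bound you already established, $\F_{p,q}(\mu,\G)\geq F_{p,q}(u,\G)+\ee(\mu-m,\R)$, yields $\F_{p,q}(\mu,\G)>\f{m}{\mu}\F_{p,q}(\mu,\G)+\f{\mu-m}{\mu}\F_{p,q}(\mu,\G)=\F_{p,q}(\mu,\G)$, a contradiction; no gluing upper bound and no strict subadditivity of the infimum levels are needed. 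With this replacement your argument closes.
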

\begin{proof}
	The proof is almost identical to that of \cite[Proposition 3.1]{ABD}, so that here we just sketch the argument to stress the unique minor modification that is needed. Let $(u_n)\subset H_\mu^1(\G)$ be a minimizing sequence for $F_{p,q}$, i.e. $F_{p,q}(u_n,\G)\to\F_{p,q}(\mu,\G)$ as $n\to+\infty$. By \eqref{GnpG}--\eqref{GninfG}, $u_n\rightharpoonup u$ in $H^1(\G)$ and $u_n\to0$ in $L_{\text{loc}}^\infty(\G)$, for some $u\in H^1(\G)$. Arguing as in \cite[Proposition 3.1]{ABD}, one obtains that either $u\equiv0$ on $\G$ or $u$ is a ground state at mass $\mu$. To rule out the former case, it is enough to note that if $u_n\to0$ in $L_{\text{loc}}^\infty(\G)$, then $u_n\to0$ in $L^\infty(\K)$, so that both $\sum_{\vv\in V}|u_n(\vv)|^q\to0$ and $\|u_n\|_{L^p(\K)}\to0$ as $n\to+\infty$. Hence, arguing as in the proof of \cite[Theorem 3.3]{AST} leads to
	\[
	\ee(\mu,\R)>\F_{p,q}(\mu,\G)=\lim_{n} F_{p,q}(u_n,\G)\geq\liminf_{n\to+\infty}E(u_n,\G)\geq\ee(\mu,\R),
	\]
	i.e. a contradiction.
\end{proof}
\begin{corollary}
\label{compactcor}
Let $\G$ be a non--compact graph and $\mu>0$. If there exists $u\in H^1_\mu(\G)$ such that $F_{p,q}(u,\G)\le \Eps(\mu,\R)$, then ground states of $F_{p,q}$ at mass $\mu$ exist.
\end{corollary}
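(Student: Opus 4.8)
The plan is to reduce the statement to Proposition \ref{compactth} by a simple dichotomy on the value of the ground state level. First I would observe that the hypothesis directly controls $\F_{p,q}(\mu,\G)$: since $u\in H_\mu^1(\G)$ is an admissible competitor, the very definition \eqref{eq:problem} of $\F_{p,q}(\mu,\G)$ as an infimum yields
\[
\F_{p,q}(\mu,\G)\le F_{p,q}(u,\G)\le\ee(\mu,\R).
\]
Thus the hypothesis pins the ground state level below or at $\ee(\mu,\R)$, and the whole argument splits according to whether the rightmost inequality is strict or an equality.

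In the strict case $\F_{p,q}(\mu,\G)<\ee(\mu,\R)$, Proposition \ref{compactth} applies verbatim and already produces a ground state, so there is nothing left to do. The only situation not covered by Proposition \ref{compactth} is the borderline equality $\F_{p,q}(\mu,\G)=\ee(\mu,\R)$, and this is precisely where the extra information carried by $u$ becomes essential. Indeed, in that case the displayed chain collapses to $F_{p,q}(u,\G)=\F_{p,q}(\mu,\G)$, so $u$ itself realizes the infimum and is therefore a ground state of $F_{p,q}$ at mass $\mu$ on $\G$.

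I do not expect any genuine obstacle here, as the statement is a soft corollary of Proposition \ref{compactth}. The only point that needs care is not to overlook the equality case: in it no compactness of minimizing sequences is required at all, because the prescribed competitor $u$ is already optimal, whereas in the strict case the compactness argument is entirely delegated to Proposition \ref{compactth}. Together the two cases exhaust the hypothesis and deliver the claim.
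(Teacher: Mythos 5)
Your argument is correct and is exactly the ``straightforward consequence'' the paper has in mind: either $\F_{p,q}(\mu,\G)<\ee(\mu,\R)$ and Proposition \ref{compactth} applies, or equality holds and the competitor $u$ itself attains the infimum. No gaps; this matches the paper's (unwritten) intended proof.
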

\begin{proof}
	It is a straightforward consequence of Proposition \ref{compactth}.
\end{proof}
\begin{remark}
	\label{rem:exF_da_exE}
	By \eqref{eq:F leq E} and Corollary \ref{compactcor}, it follows immediately that if ground states of $E$ at mass $\mu$ exist on $\G$, then also ground states of $F_{p,q}$ at mass $\mu$ exist. Indeed, by \cite[Theorem 2.2]{AST}, if $u\in H_\mu^1(\G)$ is a ground state of $E$ in $H_\mu^1(\G)$, then necessarily $E(u,\G)\leq\ee(\mu,\R)$, so that $F_{p,q}(u,\G)\leq E(u,\G)\leq\ee(\mu,\R)$ and ground states of $F_{p,q}$ exist too.
\end{remark}

\subsection{Ground states on the real line and on star graphs}
To conclude this preliminary section, we report here some results for the doubly nonlinear problem on the real line and on star graphs. Almost all of the following has already been proved or it is a minor modification of the analysis in \cite{ABD,BD}. For the sake of completeness, the proof of what is new is provided here whenever needed.

The first result concerns the problem on the real line. 
\begin{proposition}
\label{exline}
Let $p\in(2,6)$, $q\in(2,4)$. On the real line $\R$, for every $\mu>0$ there exists a unique positive ground state $\eta_{\mu}\in H_\mu^1(\R)$ of $F_{p,q}$ at mass $\mu$. Moreover, $\eta_\mu$ satisfies 
\begin{align}
F_{p,q}(\eta_{\mu},\R)<&\,E(\phi_{\mu},\R),\label{Fpq<Ephi}\\
|\eta_{\mu}(0)|>&\,|\phi_{\mu}(0)|\,, \label{etamu>phimu}\\
|\eta_{\mu}(0)|^{2}=&\,o(\mu)\quad\text{as}\quad \mu\to 0.\label{infnorm-opic}
\end{align} 
\end{proposition}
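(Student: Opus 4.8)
The plan is to treat the four assertions in increasing order of difficulty, borrowing existence, uniqueness and positivity from the real--line analysis in \cite{BD} and proving the three quantitative estimates by hand. For existence I would test the level $\F_{p,q}(\mu,\R)$ against the soliton $\phi_\mu$: since $\phi_\mu(0)\neq0$,
\[
\F_{p,q}(\mu,\R)\le F_{p,q}(\phi_\mu,\R)=E(\phi_\mu,\R)-\f1q|\phi_\mu(0)|^q<E(\phi_\mu,\R)=\ee(\mu,\R),
\]
so Proposition \ref{compactth} produces a ground state $\eta_\mu$, and the same chain gives \eqref{Fpq<Ephi} for free (as $F_{p,q}(\eta_\mu,\R)=\F_{p,q}(\mu,\R)$). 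Positivity follows by replacing $\eta_\mu$ with $|\eta_\mu|$, which does not increase $F_{p,q}$, and invoking the maximum principle on the Euler--Lagrange equation $-\eta_\mu''-|\eta_\mu|^{p-2}\eta_\mu=\lambda\eta_\mu$ on $\R\setminus\{0\}$. For uniqueness I would use that, by Lemma \ref{dec-2half} with $N=2$, every ground state is positive and symmetrically decreasing about the origin, hence on $(0,\infty)$ it solves the above ODE with $\eta_\mu\to0$ at infinity and the vertex condition $\eta_\mu'(0^+)=-\f12\eta_\mu(0)^{q-1}$; the first integral $(\eta_\mu')^2=-\lambda\eta_\mu^2-\f2p\eta_\mu^p$ then turns the mass into an explicit monotone function of $\eta_\mu(0)$ once $\lambda$ is eliminated through the vertex condition, which pins down the profile, exactly as in the star--graph analysis of \cite{ABD} (the line being the case of two half--lines). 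This step I would mostly defer to \cite{BD,ABD}.

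The estimate \eqref{etamu>phimu} I would obtain from a short variational comparison. On the one hand $\F_{p,q}(\mu,\R)\le F_{p,q}(\phi_\mu,\R)$ reads
\[
E(\eta_\mu,\R)-\f1q|\eta_\mu(0)|^q\le E(\phi_\mu,\R)-\f1q|\phi_\mu(0)|^q;
\]
on the other hand $E(\phi_\mu,\R)=\ee(\mu,\R)\le E(\eta_\mu,\R)$ since $\phi_\mu$ minimizes $E$. Subtracting yields $|\eta_\mu(0)|^q\ge|\phi_\mu(0)|^q$, hence $|\eta_\mu(0)|\ge|\phi_\mu(0)|$. Equality would force $E(\eta_\mu,\R)=\ee(\mu,\R)$, so $\eta_\mu$ would minimize the standard energy $E$ and therefore, being positive and symmetric about $0$, coincide with $\phi_\mu$; this is impossible because $\phi_\mu'(0)=0$ whereas $\eta_\mu'(0^+)=-\f12\eta_\mu(0)^{q-1}\neq0$. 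Thus the inequality is strict.

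Finally, \eqref{infnorm-opic} is the only genuinely quantitative point, and I expect it to be the main obstacle, since Lemma \ref{limitest-norms} supplies the needed sup--norm control as $\mu\to0$ only when $q>\f p2+1$. To cover all exponents I would start from $F_{p,q}(\eta_\mu,\R)<-\theta_p\mu^{2\beta+1}<0$ (from the chain above together with \eqref{Ephimu}) to get, with $T:=\|\eta_\mu'\|_2^2$ and the Gagliardo--Nirenberg inequalities \eqref{GnpG}--\eqref{GninfG},
\[
\tfrac12 T\le\f{K_p}{p}\mu^{\f{p+2}{4}}T^{\f{p-2}{4}}+\f{K^{q/2}}{q}\mu^{\f{q}{4}}T^{\f q4}.
\]
Dividing by $T$ shows that at least one of the two terms on the right is $\ge\f14$, which forces $T\le C\mu^{\min\{2\beta+1,\,q/(4-q)\}}$ for small $\mu$. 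Inserting this into \eqref{GninfG} gives
\[
|\eta_\mu(0)|^2\le\|\eta_\mu\|_\infty^2\le K\mu^{1/2}T^{1/2}\le C'\mu^{(1+\min\{2\beta+1,\,q/(4-q)\})/2},
\]
and since $2\beta+1>1$ and $q/(4-q)>1$ (because $q>2$) the resulting exponent exceeds $1$, whence \eqref{infnorm-opic}. The delicate regime is precisely $q<\f p2+1$, where the delta scaling $q/(4-q)$ is the one that dominates and no a priori bound from Lemma \ref{limitest-norms} is available.
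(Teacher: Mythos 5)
Your proposal is correct, and for three of the four assertions it follows essentially the same route as the paper: existence and uniqueness are ultimately deferred to \cite{BD} (the paper cites \cite[Theorem 1.3]{BD} directly rather than re-running the concentration--compactness step through Proposition \ref{compactth}, but your version is legitimate), \eqref{Fpq<Ephi} comes from testing with the soliton, and \eqref{etamu>phimu} from combining \eqref{Fpq<Ephi} with $E(\eta_\mu,\R)\ge E(\phi_\mu,\R)$ --- the paper obtains strictness directly from $\F_{p,q}(\mu,\R)<F_{p,q}(\phi_\mu,\R)$, you obtain it by ruling out the equality case via the jump condition at the origin; both work. The genuine difference is in \eqref{infnorm-opic}. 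The paper argues by contradiction: it first shows $\|\eta_\mu'\|_2$ is bounded, deduces $\|\eta_\mu\|_p^p=o(\mu)$, then assumes $\|\eta_\mu'\|_2^2\ge C\mu$ and derives $1\le C'\mu^{(q-2)/2}$, which is absurd since $q>2$. Your dichotomy (one of the two terms on the right of $\tfrac12\le A+B$ is at least $\tfrac14$) is a direct, one-shot argument yielding the quantitative bound $\|\eta_\mu'\|_2^2\le C\mu^{\min\{2\beta+1,\,q/(4-q)\}}$, whose exponent exceeds $1$ precisely because $p>2$ and $q>2$; plugging into \eqref{GninfG} then gives $|\eta_\mu(0)|^2=O\big(\mu^{(1+\min\{2\beta+1,\,q/(4-q)\})/2}\big)=o(\mu)$ with an explicit rate, which is slightly more than the paper proves and avoids the intermediate estimate $\|\eta_\mu\|_p^p=o(\|\eta_\mu'\|_2^2)$. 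Note finally that your closing worry about the regime $q<\f p2+1$ is already answered by your own argument: both your proof and the paper's use only $F_{p,q}(\eta_\mu,\R)<0$ together with \eqref{GnpG}--\eqref{GninfG}, so neither needs Lemma \ref{limitest-norms}, and no distinction between $q\lessgtr\f p2+1$ ever enters.
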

\begin{proof} 
The existence of a unique positive ground state $\eta_\mu$ on $\R$ is the content of \cite[Theorem 1.3]{BD}. Moreover, computing the Euler--Lagrange equation of the problem shows that $\eta_\mu$ cannot coincide with the soliton $\phi_{\mu}$, so that
\begin{equation*}
F_{p,q}(\eta_{\mu},\R)=\F_{p,q}(\mu,\R)<F_{p,q}(\phi_{\mu},\R)=E(\phi_{\mu},\R)-\f{1}{q}|\phi_{\mu}(0)|^{q}<E(\phi_{\mu},\R)\,,
\end{equation*}
entailing \eqref{Fpq<Ephi}. On the contrary, since the soliton $\phi_\mu$ is the unique (up to translations) positive ground state of $E$ at mass $\mu$ on $\R$,
\[
E(\eta_{\mu},\R)\ge E(\phi_{\mu},\R)\,.
\] 
Combining with \eqref{Fpq<Ephi} gives \eqref{etamu>phimu}.

We are left to prove \eqref{infnorm-opic}. Observe first that $F_{p,q}(\eta_\mu,\R)<0$ for every $\mu>0$, by \eqref{Fpq<Ephi} and $E(\phi_\mu,\R)<0$. Coupling with \eqref{GnpG}--\eqref{GninfG}, it follows that
\begin{equation*}
\f{1}{2}\|\eta_{\mu}'\|_{2}^{2}-\f{K_{p}}{p}\mu^{\f{p+2}{4}}\|\eta_{\mu}'\|_{2}^{\f{p}{2}-1}-\f{K^{\f q2}}{q}\mu^{\f{1}{2}}\|\eta_{\mu}'\|_{2}^{\f q2}<0,
\end{equation*}
that entails the existence of a constant $M>0$ so that $\|\eta_\mu'\|_2\leq M$ for every $\mu$ small enough. In particular, this implies $\|\eta_\mu\|_\infty\to0$ by \eqref{GninfG} and thus $\|\eta_\mu\|_p^p\leq\|\eta_\mu\|_\infty^{p-2}\mu=o(\mu)$  as $\mu\to0$. Suppose then by contradiction that there exists $C>0$ such that $\|\eta_{\mu}'\|_{2}^{2}\ge C\mu$ as $\mu \to 0$. Since $F_{p,q}(\eta_{\mu})<0$, by \eqref{GninfG} we have
\begin{equation*}
\f{1}{2}\|\eta_{\mu}'\|_{2}^{2}+o\left(\|\eta_{\mu}'\|_{2}^{2}\right)=\f{1}{2}\|\eta_{\mu}'\|_{2}^{2}-\f1p\|\eta_\mu\|_p^p<\f1q|\eta_\mu(0)|^q\le\f{K^\f q2}{q}\mu^{\f{q}{4}}\|\eta_{\mu}'\|_{2}^{\f{q}{2}},
\end{equation*}
that, dividing by $\|\eta_{\mu}'\|_{2}^{\f{q}{2}}$ and using $\|\eta_{\mu}'\|_{2}^{2}\ge C\mu$, implies 
\begin{equation*}
1\leq C'\mu^{\f{2q-4}4}\quad\text{as}\quad\mu\to 0\,.
\end{equation*}
Since $q>2$, this is impossible. Hence, $\|\eta_{\mu}'\|_{2}=o(\sqrt{\mu})$ and (by \eqref{GninfG} again)
\begin{equation*}
|\eta_{\mu}(0)|^{2}=\|\eta_{\mu}\|_{\infty}^{2}\le K\sqrt{\mu}\|\eta_{\mu}'\|_{2}=o(\mu)\quad\text{as}\quad \mu\to 0\,.\qedhere
\end{equation*}
\end{proof}
The last result of the section is a generalization of \cite[Theorem 1.1]{ABD} on star graphs $S_N$ to the functional $F_{p,q,\tau}(\cdot,S_{N}):H^{1}_{\mu}(S_{N})\to\R$
\begin{equation}
\label{Fpq2}
F_{p,q,\tau}(u,S_{N})=\f{1}{2}\int_{S_{N}}|u'|^{2}\,dx-\f{1}{p}\int_{S_{N}}|u|^{p}\,dx-\f{\tau}{q}|u(0)|^{q},
\end{equation}
where $\tau>0$ is a positive parameter.
\begin{remark}
	The functional $F_{p,q,1}$ coincides with the usual functional $F_{p,q}$.
\end{remark}
\begin{proposition}
	\label{exstarg}
	Let $p\in(2,6),\,q\in(2,4)$, $q\neq\frac{p}{2}+1$ and $\tau>0$. Let $S_N$ be the star graph with $N\geq3$ half--lines. Then there exists a critical mass $\mu^{*}:=\mu^{*}(p,q,\tau,N)>0$ such that
	\begin{itemize}
		\item[(i)] if $q<\frac{p}{2}+1$, then ground states of \eqref{Fpq2} at mass $\mu$ exist if and only if $\mu\le\mu^{*}$;
		\item[(ii)] if $q>\frac{p}{2}+1$, then ground states of \eqref{Fpq2} at mass $\mu$ exist if and only if $\mu\ge\mu^{*}$.
	\end{itemize}	
	Furthermore, whenever they exist, ground states at prescribed mass are unique and radially decreasing on $S_N$, i.e. their restriction to each half--line of the graph corresponds to the same decreasing function on $\R^+$.
\end{proposition}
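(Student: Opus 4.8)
The plan is to reduce the statement to the already established case $\tau=1$, i.e. to \cite[Theorem 1.1]{ABD}, by means of a single explicit scaling of both the graph and the function. Since $S_N$ consists only of half--lines, for $\lambda,b>0$ the dilation $x\mapsto\lambda^{b}x$ maps each half--line onto itself and fixes the vertex, so for any $a\in\R$ the transformation $u(\cdot)\mapsto v:=\lambda^{a}u(\lambda^{b}\,\cdot)$ is a bijection of $H^{1}(S_N)$ onto itself preserving continuity at the vertex.

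First I would record how the three terms of \eqref{Fpq2} transform under $u\mapsto v$. A direct change of variables gives $\|v'\|_2^2=\lambda^{2a+b}\|u'\|_2^2$, $\|v\|_p^p=\lambda^{pa-b}\|u\|_p^p$, $|v(0)|^q=\lambda^{aq}|u(0)|^q$ and $\|v\|_2^2=\lambda^{2a-b}\|u\|_2^2$. I would then require the kinetic and the $L^p$ terms to scale by a common factor $c$, which forces $b=\f{p-2}2a$ and $c=\lambda^{\f{p+2}2a}$, and that the $\delta$ term scale by $c$ as well, which demands $\tau\lambda^{aq}=\lambda^{\f{p+2}2a}$, i.e. $\tau=\lambda^{\f{p+2-2q}2a}$. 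This is precisely where the hypothesis $q\neq\f p2+1$ enters: since then $p+2-2q\neq0$, this relation can be solved, for instance by taking $a=1$, $b=\f{p-2}2$ and $\lambda=\tau^{\f{2}{p+2-2q}}>0$.

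With this choice one obtains the identity $F_{p,q,\tau}(v,S_N)=c\,F_{p,q}(u,S_N)$ with $c=\lambda^{\f{p+2}2}>0$, while the mass transforms as $\|v\|_2^2=\kappa\|u\|_2^2$ with $\kappa:=\lambda^{\f{6-p}2}=\tau^{\f{6-p}{p+2-2q}}>0$. Hence $u\mapsto v$ is a bijection between $H_m^1(S_N)$ and $H_{\kappa m}^1(S_N)$ turning $F_{p,q}=F_{p,q,1}$ into a fixed positive multiple of $F_{p,q,\tau}$. In particular, $u$ is a global minimizer of $F_{p,q}$ at mass $m$ if and only if $v$ is a global minimizer of $F_{p,q,\tau}$ at mass $\mu=\kappa m$; moreover, since the dilation preserves radial symmetry and monotonicity on each half--line, the correspondence maps radially decreasing functions to radially decreasing functions and preserves uniqueness.

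The conclusion then follows by transporting \cite[Theorem 1.1]{ABD}. That result provides, for $\tau=1$, a critical mass $m^{*}=m^{*}(p,q,N)$ such that ground states of $F_{p,q}$ at mass $m$ on $S_N$ exist if and only if $m\le m^{*}$ when $q<\f p2+1$, or $m\ge m^{*}$ when $q>\f p2+1$, and that such states are unique and radially decreasing. Setting $\mu^{*}:=\kappa m^{*}=\tau^{\f{6-p}{p+2-2q}}m^{*}$ and using that $\mu=\kappa m$ with $\kappa>0$ preserves the direction of every inequality, statements (i)--(ii) together with the uniqueness and monotonicity claim follow at once. I do not expect a substantial obstacle: the only delicate point is to check that $q\neq\f p2+1$ is exactly the condition ensuring $p+2-2q\neq0$, so that $\lambda$ can be chosen to absorb $\tau$; the degenerate case $q=\f p2+1$, in which this reduction breaks down, is correctly excluded from the statement.
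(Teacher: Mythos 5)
Your proposal is correct, and it proves the proposition by a different (and arguably more informative) route than the paper. The paper's proof is a one--line deferral: it states that the argument of \cite[Theorem 1.1]{ABD} for $\tau=1$ ``applies without changes'' when the coefficient $\tau$ is inserted, i.e.\ it implicitly re--runs that proof with the extra parameter. You instead transport the $\tau=1$ \emph{statement} via the exact symmetry $v=\lambda u(\lambda^{(p-2)/2}\cdot)$ with $\lambda=\tau^{2/(p+2-2q)}$, which is legitimate precisely because $S_N$ is dilation--invariant (every edge is a half--line, so the dilation fixes the vertex and maps the graph onto itself) and because $q\neq\f p2+1$ makes the exponent solvable; your computations of how the four quantities $\|v'\|_2^2$, $\|v\|_p^p$, $|v(0)|^q$, $\|v\|_2^2$ scale are all correct, and the identity $F_{p,q,\tau}(v,S_N)=c\,F_{p,q,1}(u,S_N)$ with $c>0$ does carry existence, uniqueness and radial monotonicity across the mass change $\mu=\kappa m$, $\kappa>0$. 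What your approach buys is a genuinely black--box reduction that does not require inspecting the internals of the proof in \cite{ABD}, plus the explicit dependence $\mu^*(p,q,\tau,N)=\tau^{\f{6-p}{p+2-2q}}\mu^*(p,q,1,N)$, which the paper does not record. What it loses is generality: the scaling trick is specific to star graphs and to $q\neq\f p2+1$, whereas re--running the argument would in principle also handle the threshold case $q=\f p2+1$ (treated in \cite[Theorem 1.2]{ABD}); since that case is excluded from the statement, this costs nothing here.
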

\begin{proof}
 The statement for $\tau=1$ has been proved in \cite[Theorem 1.1]{ABD}. It is immediate to verify that the same argument applies without changes to any choice of the parameter $\tau>0$.
\end{proof}

\section{Existence results: proof of Theorems \ref{3half-deg3}--\ref{3half-deg2}--\ref{2half}}
\label{sec:ex}

In this section we address the existence of doubly nonlinear ground states, providing the proof of the existence statements in Theorem \ref{3half-deg3}, as well as the proof of Theorems \ref{3half-deg2}--\ref{2half}.

We begin with the existence results in Theorem \ref{3half-deg3}, whose proof is a straightforward consequence of the next two propositions. Note that none of them requires the validity of Assumption (H), so that they hold in a more general setting than the one of Theorem \ref{3half-deg3}.
\begin{proposition}
\label{exsubdiag-smallmu}
Let $\G$ be a non--compact graph and $q<\f{p}{2}+1$. Then there exists $\underline{\mu}_{p,q}:=\underline{\mu}_{p,q}(p,q,\G)>0$ such that ground states of $F_{p,q}$ at mass $\mu$ on $\G$ exist for every $\mu<\underline{\mu}_{p,q}$.
\end{proposition}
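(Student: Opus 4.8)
The plan is to exhibit, for every sufficiently small $\mu$, a competitor $u_\mu\in H^1_\mu(\G)$ with $F_{p,q}(u_\mu,\G)\le\ee(\mu,\R)$, so that ground states exist by Corollary \ref{compactcor}. Since $\G$ is non--compact it carries at least one half--line; let $N\ge1$ denote the number of half--lines. The guiding idea is that, because $q<\f p2+1$, Remark \ref{rem:diagpq} gives $\alpha q<2\beta+1$, so that the pointwise nonlinearity, of order $\mu^{\alpha q}$ when evaluated on functions of amplitude $\sim\mu^\alpha$, beats the standard energy, of order $\mu^{2\beta+1}$, as $\mu\to0$. Hence it suffices to build $u_\mu$ whose vertex values are of order $\mu^\alpha$ and whose standard energy exceeds $\ee(\mu,\R)$ by at most $O(\mu^{2\beta+1})$.

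Concretely, I would take $u_\mu$ equal to a constant $a:=a_\mu$ on the whole compact core $\K$ and, on each half--line $\HH_i\simeq[0,+\infty)$, equal to the decreasing half--soliton $\phi_{2\nu}|_{[0,+\infty)}$ with $\phi_{2\nu}(0)=a$, where $\nu=\nu_\mu$ is chosen so that $\|u_\mu\|_2^2=\mu$. This function is continuous (each half--line attaches at value $a$, which matches the constant on $\K$) and lies in $H^1(\G)$. Tuning the mass forces $N\nu=\mu-\ell a^2=\mu(1+O(\mu^\beta))$, whence $\nu=\f\mu N(1+O(\mu^\beta))$ and, by \eqref{eq:phimu}--\eqref{alfabeta}, $a=(2\nu)^\alpha\phi_1(0)=c_0\mu^\alpha(1+O(\mu^\beta))$ with $c_0:=(2/N)^\alpha\phi_1(0)>0$.

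It then remains to estimate the two contributions to $F_{p,q}(u_\mu,\G)$. For the standard energy, the constant part on $\K$ contributes no kinetic term and only the negligible $L^p$ amount $O(a^p)=O(\mu^{\alpha p})=o(\mu^{2\beta+1})$ (recall $\alpha p=2\beta+1+\beta$), while each half--line carries exactly half of $E(\phi_{2\nu},\R)$; summing over the $N$ half--lines and using \eqref{Ephimu} gives
\[
E(u_\mu,\G)=-\theta_p\Big(\f2N\Big)^{2\beta}\mu^{2\beta+1}+o(\mu^{2\beta+1})=\ee(\mu,\R)+\theta_p\Big(1-\big(\f2N\big)^{2\beta}\Big)\mu^{2\beta+1}+o(\mu^{2\beta+1}),
\]
so that $E(u_\mu,\G)\le\ee(\mu,\R)+C\mu^{2\beta+1}$ for some constant $C=C(p,q,\G)$ and all small $\mu$. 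For the pointwise part, $u_\mu\equiv a$ on $\K\supseteq V$ yields $\sum_{\vv\in V}|u_\mu(\vv)|^q=n\,a^q\ge c\,\mu^{\alpha q}$ for some $c>0$. Combining,
\[
F_{p,q}(u_\mu,\G)=E(u_\mu,\G)-\f1q\sum_{\vv\in V}|u_\mu(\vv)|^q\le\ee(\mu,\R)+C\mu^{2\beta+1}-\f cq\,\mu^{\alpha q}.
\]
Since $\alpha q<2\beta+1$, the last two terms are negative for $\mu$ small, and defining $\underline{\mu}_{p,q}$ as the corresponding threshold completes the proof via Corollary \ref{compactcor}.

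The only delicate point is the energy bookkeeping: one must check that keeping the mass concentrated near the vertices (so that the vertex values stay of order $\mu^\alpha$) costs only $O(\mu^{2\beta+1})$ in standard energy. This is precisely why the half--soliton profiles — rather than, say, a cheap linear decay, which would cost an excessive $O(\mu^{\beta+1})$ — must be used on every half--line; with this choice each half--line realises the optimal decay rate and the excess over $\ee(\mu,\R)$ is of the correct order $\mu^{2\beta+1}$, which is then dominated by the pointwise gain $\mu^{\alpha q}$. I expect this estimate, together with verifying that the core mass correction only perturbs the amplitude at relative order $\mu^\beta$, to be the main technical obstacle; everything else is a direct application of Corollary \ref{compactcor}.
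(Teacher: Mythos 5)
Your construction is exactly the one used in the paper: the competitor equal to the soliton's maximum on the compact core and to decreasing half--solitons (of total mass tuned so that $\|u_\mu\|_2^2=\mu$) on the half--lines, with the same mass bookkeeping $m=\tfrac{2\mu}{N}(1+O(\mu^\beta))$ and the same conclusion that the vertex term of order $\mu^{\alpha q}$ dominates the $O(\mu^{2\beta+1})$ excess in standard energy because $\alpha q<2\beta+1$. The proof is correct and essentially identical to the paper's.
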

\begin{proof}
Consider $u\in H^{1}_{\mu}(\G)$ given by
\[
u(x):=\begin{cases}
\phi_{m}(x) & \text{if }x\in\HH_i,\text{ for some }i=1,\,\dots,N\\
\|\phi_m\|_{L^\infty(\R)} & \text{ if }x\in\K\,,
\end{cases}
\]
where $\phi_m$ is the soliton at mass $m$ on $\R$ as in \eqref{eq:phimu}. Then it must be
\begin{equation*}
\mu=\f{N}{2}\|\phi_{m}\|_{L^{2}(\R)}^{2}+\ell\|\phi_{m}\|_{L^{\infty}(\R)}^{2}=\f{N}{2}m+\ell |\phi_1(0)|^2 m^{2\alpha}
\end{equation*}
where as usual $\ell:=|\K|$. In particular, observe that $m\to0$ as $\mu\to0$, so that $m^{2\alpha}=o(m)$ and $m=\mu+o(\mu)$ as $\mu \to 0$. Moreover, since $q<\f p2+1$, by \eqref{alfabeta} and Remark \ref{rem:diagpq} it holds $\alpha q<2\beta+1<\alpha p$. Hence, 
\begin{equation*}
\begin{split}
F_{p,q}(u,\G)&=\f{N}{2}E(\phi_{m},\R)-\f{\ell |\phi_1(0)|^p}{p}m^{\alpha p}-\f{n|\phi_1(0)|^q}{q}m^{\alpha q}\\
&=-\f{N}{2}\theta_p m^{2\beta+1}-\f{\ell |\phi_1(0)|^p}{p}m^{\alpha p}-\f{n |\phi_1(0)|^q}{q}m^{\alpha q}\\
&=-\f{n|\phi_1(0)|^q}{q}\mu^{\alpha q}+o(\mu^{\alpha q})\quad\text{as}\quad \mu\to 0\,,
\end{split}
\end{equation*}
so that
\begin{equation*}
F_{p,q}(u,\G)=-\f{n|\phi_1(0)|^q}{q}\mu^{\alpha q}+o(\mu^{\alpha q})\leq-\theta_{p}\mu^{2\beta+1}=\ee(\mu,\R)\quad\text{as }\mu\to0\,.
\end{equation*}
Therefore, by Corollary \ref{compactcor} there exists $\underline{\mu}_{p,q}>0$ such that ground states of $F_{p,q}$ at mass $\mu$ on $\G$ exist for every $\mu<\underline{\mu}_{p,q}$.
\end{proof}
\begin{proposition}
	\label{deg3-largemu}
	Let $\G$ be a non--compact graph with at least one vertex of degree greater than or equal to $3$ and $q>\f{p}{2}+1$. Then there exists $\overline{\mu}_{p,q}:=\overline{\mu}_{p,q}(p,q,\G)>0$ such that ground states of $F_{p,q}$ at mass $\mu$ on $\G$ exist for every $\mu>\overline{\mu}_{p,q}$.
\end{proposition}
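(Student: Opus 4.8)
The plan is to exhibit, for every sufficiently large $\mu$, a competitor $u\in H^1_\mu(\G)$ with $F_{p,q}(u,\G)\le\ee(\mu,\R)$ and then invoke Corollary \ref{compactcor}. The guiding principle is the same as in Proposition \ref{exsubdiag-smallmu}, but with the roles reversed by the condition $q>\f p2+1$: by Remark \ref{rem:diagpq} this is equivalent to $\alpha q>2\beta+1$, so that a test function concentrated at a vertex pays a pointwise (delta) energy of order $\mu^{\alpha q}$, which beats the reference level $\ee(\mu,\R)=-\theta_p\mu^{2\beta+1}$ (recall \eqref{Ephimu}) as $\mu\to+\infty$. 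The only structural ingredient I would use is a vertex $\vv$ of degree at least $3$, which provides three distinct edge--germs along which to place such a concentrated profile.

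Concretely, let $\vv$ be a vertex with $\deg(\vv)\ge3$ and fix three half--edges $e_1,e_2,e_3$ issuing from $\vv$; let $L>0$ be smaller than the length of each bounded edge among them. For a mass parameter $m>0$ to be tuned, I would define $u$ on $\G$ by placing on each $e_i$, starting from $\vv$, the half--soliton $\phi_m|_{[0,+\infty)}$, suitably cut off to vanish before the far endpoint of the bounded edges (so that $u\equiv0$ on the rest of $\G$, in particular at every vertex other than $\vv$). Since $\phi_m$ is even and decreasing, the three branches share the common value $\phi_m(0)=|\phi_1(0)|\,m^\alpha$ at $\vv$, hence $u\in H^1(\G)$, $u(\vv)=\phi_m(0)$, and $u$ vanishes at all other vertices. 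The mass of each half--soliton being $\tfrac12\|\phi_m\|_{L^2(\R)}^2=\tfrac12 m$, the constraint $\|u\|_2^2=\mu$ forces $m=\tfrac23\mu$ (up to a correction exponentially small in $m$, coming from the cut--off, absorbed by a harmless rescaling of $m$).

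Collecting the contributions and using the scaling \eqref{eq:phimu}--\eqref{alfabeta} together with \eqref{Ephimu}, the kinetic and $L^p$ parts combine into $\tfrac32 E(\phi_m,\R)=-\tfrac32\theta_p m^{2\beta+1}$, whereas the only surviving pointwise term is $-\tfrac1q|u(\vv)|^q=-\tfrac1q|\phi_1(0)|^q m^{\alpha q}$. Therefore
\begin{equation*}
F_{p,q}(u,\G)=-\f32\theta_p m^{2\beta+1}-\f{|\phi_1(0)|^q}{q}\,m^{\alpha q}+o\big(m^{2\beta+1}\big)\qquad\text{as }m\to+\infty,
\end{equation*}
the remainder gathering the (exponentially small) truncation and renormalization errors. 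Substituting $m=\tfrac23\mu$ and comparing with $\ee(\mu,\R)=-\theta_p\mu^{2\beta+1}$, the difference $F_{p,q}(u,\G)-\ee(\mu,\R)$ equals a term of order $\mu^{2\beta+1}$ plus a strictly negative term of order $\mu^{\alpha q}$; since $\alpha q>2\beta+1$, the latter dominates and the whole quantity is negative for every $\mu$ large enough. Hence there is $\overline{\mu}_{p,q}>0$ with $F_{p,q}(u,\G)\le\ee(\mu,\R)$ for all $\mu>\overline{\mu}_{p,q}$, and Corollary \ref{compactcor} yields existence at every such mass.

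The only delicate point is the treatment of the bounded edges: since the half--soliton has non--compact support, I would cut it off at the fixed length $L$ and check that the induced errors in mass, kinetic energy, $L^p$--norm and pointwise sum are negligible. This is where the exponential decay of $\phi_1$ (hence of $\phi_m$ on the scale $m^{-\beta}\to0$) is essential: all truncation terms are $O\big(m^\gamma e^{-cm^\beta L}\big)$ for suitable $\gamma,c>0$, thus super--polynomially small and absorbed in the $o(m^{2\beta+1})$ above. I do not expect a genuine obstruction here, only routine bookkeeping; the conceptual content is entirely contained in the inequality $\alpha q>2\beta+1$, which makes concentration energetically convenient at large mass.
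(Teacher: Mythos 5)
Your construction is essentially the paper's: concentrate a truncated, renormalized soliton profile at a vertex $\vv$ of degree at least $3$, and observe that the vertex term $-\f1q|u(\vv)|^q\sim -c\,\mu^{\alpha q}$ dominates the positive deficit of order $\mu^{2\beta+1}$ in the standard energy precisely because $\alpha q>2\beta+1$. Your bookkeeping of the truncation errors via the exponential decay of $\phi_1$ on the scale $m^{-\beta}$ is also fine and matches the paper's $\kappa(\Phi_\mu-\delta)_+$ device.

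There is, however, one step that fails as written. You place the half--soliton on exactly three edges $e_1,e_2,e_3$ issuing from $\vv$ and set $u\equiv0$ on the rest of $\G$. If $\deg(\vv)>3$ (e.g. a star graph with $5$ half--lines, whose only vertex has degree $5$), the resulting function is discontinuous at $\vv$: it tends to $\phi_m(0)>0$ along $e_1,e_2,e_3$ and to $0$ along the remaining edges incident to $\vv$, so it does not belong to $H^1(\G)$. Since $u(\vv)>0$ is exactly what you need, $u$ must be positive near $\vv$ on \emph{every} edge incident to $\vv$. The remedy is the one adopted in the paper: distribute the profile over all $N=\deg(\vv)$ edges, placing on each the truncated half--soliton of mass $\mu/N$ (equivalently, transplant a cut--off of the radial function $\Phi_\mu$ on $S_N$ whose restriction to each half--line is $\phi_{2\mu/N}$). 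The computation is unchanged in structure: the standard part contributes $-\theta_p(2/N)^{2\beta}\mu^{2\beta+1}$, worse than $\ee(\mu,\R)$ by a positive term of order $\mu^{2\beta+1}$, while the vertex contribution $-\f{|\phi_1(0)|^q}{q}(2/N)^{\alpha q}\mu^{\alpha q}$ still dominates for $\mu$ large, so Corollary \ref{compactcor} applies as you intended.
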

\begin{proof}
	Let $\vv\in V$ be a vertex of degree not smaller than 3. Denote by $\{e_{i}\}_{i=1,\dots,N}$ the $N\ge 3$ edges emanating from $\vv$ and define $L:=\min_{i=1,\dots,N}\ell_{i}$, where $\ell_{i}:=|e_i|$ is the length of the edge $e_{i}$.
	
	Let then $\Phi_{\mu}\in H^{1}_{\mu}(S_{N})$ be the radially symmetric function on the star graph $S_N$ with $N$ half--lines whose restriction to each half--line satisfies $\Phi_\mu(x):=\phi_{\f{2\mu}{N}}(x)$, for every $x\in\R^+$. Here $\phi_{\f{2\mu}{N}}$ is the soliton at mass $\f{2\mu}N$ given by \eqref{eq:phimu}. For every $\mu>0$, let then $\delta:=\delta(\mu)$, $\kappa:=\kappa(\mu)$ be such that the function $w_{\mu}(x):=\kappa(\Phi_{\mu}(x)-\delta)_{+}$ satisfies $\|w_\mu\|_{L^2(S_N)}^2=\mu$ and it is supported on the ball $B(0,L)$ in $S_N$ of radius $L$ centered at the vertex of the star graph. Relying on the decaying properties of the solitons on the line, it is straightforward to check that $\delta\to 0$, $\kappa\to 1$ and $w_{\mu}-\Phi_{\mu}\to 0$ strongly in $H^{1}(S_{N})$ as $\mu\to +\infty$, so that $E(w_{\mu}, S_{N})-E(\Phi_{\mu}, S_{N})\to 0$ and $w_{\mu}(0)-\Phi_{\mu}(0)\to 0$ as $\mu\to +\infty$. Moreover, since $w_{\mu}$ is supported on the ball of radius $L$ centered at the vertex of $S_N$, we can think of it as a function $w_\mu\in H_\mu^1(\G)$ supported on the union of the edges $e_{i}$ emanating from the vertex $\vv$ in $\G$. Therefore, for every $\varepsilon>0$ there exists $\mu^*:=\mu^*(\varepsilon)$ so that if $\mu\geq\mu^*$
	\begin{eqnarray*}
		F_{p,q}(w_{\mu},\G)-E(\phi_{\mu},\R)&=&F_{p,q}(w_{\mu}, S_{N})-E(\phi_{\mu},\R)\\
		&=&\f{N}{2}E\left(\phi_{\f{2\mu}{N}}, \R\right)-E(\phi_{\mu},\R)-\f{1}{q}\left|\phi_{\f{2\mu}{N}}(0)\right|^{q}+\varepsilon\\
		&=&\theta_{p}\left(1-\left(\f{2}{N}\right)^{2\beta}\right)\mu^{2\beta+1}-\f{|\phi_{1}(0)|^{q}}{q}\left(\f{2}{N}\right)^{\alpha q}\mu^{\alpha q}+\varepsilon,
	\end{eqnarray*}
	where we made use of \eqref{Ephimu} and \eqref{eq:phimu}. Since $q>\f p2+1$, by Remark \ref{rem:diagpq} we have $\alpha q>2\beta+1$, so that whenever $\varepsilon$ is fixed small enough 
	\begin{equation*}
	\theta_{p}\left(1-\left(\f{2}{N}\right)^{2\beta}\right)\mu^{2\beta+1}+\varepsilon\le \f{|\phi_{1}(0)|^{q}}{q}\left(\f{2}{N}\right)^{\alpha q}\mu^{\alpha q}
	\end{equation*}
	holds for sufficiently large masses. Hence, there exists $\overline{\mu}_{p,q}>0$ so that $F_{p,q}(w_\mu,\G)\leq\ee(\mu,\R)$ for every $\mu>\overline{\mu}_{p,q}$, and by Corollary \ref{compactcor} we conclude. 
\end{proof}
\begin{proof}[Proof of Theorem \ref{3half-deg3}: existence]
	The existence result for $q<\f p2+1$ is a direct application of Proposition \ref{exsubdiag-smallmu}. On the other hand, since $\G$ satisfies Assumption (H) and it is neither the real line nor a tower of bubbles, then there is at least one vertex of degree not smaller than 3, and the existence part of the theorem for $q>\f p2+1$ follows by Proposition \ref{deg3-largemu}.
\end{proof}
Let us now focus on graphs with at least one vertex of degree 2.
\begin{proof}[Proof of Theorem \ref{3half-deg2}]
Let $\vv\in V$ be a vertex of degree $2$, $e_{1}, e_{2}$ be the two edges emanating from $\vv$ and $L:=\min\left\{|e_1|,|e_2|\right\}$.
	
For every $\mu>0$, there exists $\delta=\delta(\mu)>0$, $\kappa=\kappa(\mu)>0$ so that the function on the real line $w_\mu(x):=\kappa(\phi_{\mu}(x)-\delta)_{+}$ (where $\phi_\mu$ is the soliton at mass $\mu$ on $\R$ as in \eqref{eq:phimu}) is compactly supported on the interval $(-L,L)$ and $\|w_\mu\|_{L^2(-L,L)}^{2}=\mu$. It is straightforward to check that $\delta\to 0$, $\kappa\to 1$ and $w_\mu-\phi_{\mu}\to 0$ strongly in $H^{1}(\R)$ as $\mu\to +\infty$. This entails that $E(w_\mu, \R)-E(\phi_{\mu}, \R)\to 0$ and $w_\mu(0)-\phi_{\mu}(0)\to 0$ as $\mu\to +\infty$. Moreover, as $w_\mu$ is compactly supported on $(-L, L)$, we can think of it as a function on $\G$ supported on the union of the edges $e_{1}$ and $e_{2}$ emanating from the vertex $\vv$ of degree 2. 
	
Hence, for every $\varepsilon>0$ there exists $\mu^*=\mu^*(\varepsilon)$ such that for every $\mu\ge\mu^*$
	\begin{equation*}
	\begin{split}
	F_{p,q}(w_{\mu},\G)-E(\phi_{\mu},\R)&=E(w_\mu,(-L,L))-E(\phi_{\mu},\R)-\f{1}{q}|w_\mu(0)|^{q}\\
	&=E(w_\mu,(-L,L))-E(\phi_{\mu},\R)+\f{1}{q}\left(|\phi_{\mu}(0)|^{q}-|w_\mu(0)|^{q}\right)-\f{1}{q}|\phi_{\mu}(0)|^{q}\\
	&\le \varepsilon-\f{1}{q}|\phi_{\mu}(0)|^{q}=\varepsilon-\f{|\phi_{1}(0)|^{q}}{q}\mu^{\alpha q}.
	\end{split}
	\end{equation*}
Fixing a sufficiently small $\varepsilon$, this implies that there is $\widetilde{\mu}_{p,q}>0$ so that $F_{p,q}(w_{\mu},\G)<E(\phi_{\mu},\R)$ for every $\mu\geq\widetilde{\mu}_{p,q}$, which completes the proof by Corollary \ref{compactcor}.
\end{proof}
To conclude the analysis of the existence results, we provide the proof of Theorem \ref{2half} concerning non--compact graphs with exactly two half--lines.
\begin{proof}[Proof of Theorem \ref{2half}]
Let $\HH_{1},\HH_{2}$ be the two half--lines of the graph. Consider $u\in H^{1}_{\mu}(\G)$ defined as
\[
u(x):=\begin{cases}
\eta_m(x) & \text{if }x\in\HH_{1}\cup\HH_{2}\\
\eta_{m}(0) & \text{if }x\in\K\,,
\end{cases}
\]
where $\eta_m$ is the ground state of $F_{p,q}$ at mass $m$ on the real line as in Proposition \ref{exline}. By $\|u\|_{2}^{2}=\mu$ and \eqref{infnorm-opic} it follows that
\begin{equation*}
\mu=\|u\|^{2}_{2}=\|\eta_m\|_{L^2(\R)}^2+\ell|\eta_{m}(0)|^{2}=m+o(m)\quad \text{as}\quad \mu \to 0.
\end{equation*}
Moreover, since 
\begin{equation*}
F_{p,q}(u,\G)=E(\eta_{m},\R)+E(u,\K)-\f{n}{q}|\eta_{m}(0)|^{q}=F_{p,q}(\eta_{m},\R)-\f{\ell}{p}|\eta_{m}(0)|^{p}-\f{n-1}{q}|\eta_{m}(0)|^{q},
\end{equation*}
and 
\begin{equation*}
\begin{split}
E\left(\phi_{\mu},\R\right)=&-\theta_{p}\mu^{2\beta+1}=-\theta_p(m+\ell|\eta_m(0)|^2)^{2\beta+1}\\
=&-\theta_{p}m^{2\beta+1}-\theta_{p}(2\beta+1)\ell|\eta_{m}(0)|^{2}m^{2\beta}+o\left(|\eta_m(0)|^2m^{2\beta}\right)\quad\text{as}\quad \mu \to 0,
\end{split}
\end{equation*}
by \eqref{Fpq<Ephi} we have (recalling also \eqref{alfabeta})
\begin{equation}
\label{EuG-Ephimu}
\begin{split}
F_{p,q}(u,\G)-E\left(\phi_{\mu},\R\right)=&\,F_{p,q}(\eta_{m},\R)+\theta_{p}m^{2\beta+1}-\f{n-1}{q}|\eta_{m}(0)|^{q}\\
+&\,\left[\theta_{p}(2\beta+1)-\f{1}{p}\left(\f{\eta_{m}(0)}{m^{\alpha}}\right)^{p-2}\right]\ell|\eta_{m}(0)|^{2}m^{2\beta}+o\left(|\eta_{m}(0)|^{2}m^{2\beta}\right)\\
<& \left[\theta_{p}(2\beta+1)-\f{1}{p}\left(\f{\eta_{m}(0)}{m^{\alpha}}\right)^{p-2}\right]\ell|\eta_{m}(0)|^{2}m^{2\beta}+o\left(|\eta_{m}(0)|^{2}m^{2\beta}\right).
\end{split}
\end{equation}
Observe that 
\[
h:[0,+\infty)\to\R, \quad h(x):=\theta_{p}(2\beta+1)-\f{1}{p}x^{p-2}
\]
is  a strictly decreasing and continuous function satisfying $h(0)>0$ and $\lim_{x\to+\infty} h(x)=-\infty$, so that there is a unique $\overline{x}>0$ for which $h(\overline{x})=0$ and $h(x)<0$ if and only if  $x>\overline{x}$. By Lemma \ref{thetap-2beta} it follows that  $\overline{x}=\phi_{1}(0)$ (where as usual $\phi_{1}$ is the soliton at mass $1$ on $\R$). Since by \eqref{etamu>phimu} and \eqref{eq:phimu} we have that $\eta_{m}(0)>\phi_{m}(0)=m^\alpha\phi_1(0)$, then 
\begin{equation*}
\theta_{p}(2\beta+1)-\f{1}{p}\left(\f{|\eta_{m}(0)|}{m^{\alpha}}\right)^{p-2}<0\,.
\end{equation*} 
Therefore, coupling with \eqref{EuG-Ephimu} yields a threshold $\widehat{\mu}_{p,q}>0$ such that $F_{p,q}(u,\G)<E\left(\phi_{\mu},\R\right)$ for every $\mu\le\widehat{\mu}_{p,q}$, and by Corollary \ref{compactcor} we conclude.
\end{proof}

\section{Non--existence results: end of the proof of Theorem \ref{3half-deg3}}
\label{sec:nonex}

This section is devoted to the proof of the non--existence part of Theorem \ref{3half-deg3}. Since the argument are technically demanding, we prove two independent propositions, the first one dealing with small masses and the second one discussing the regime of large mass.
\begin{remark}
	\label{rem:omot}
	Recall that, for every graph $\G$ and every $u\in H_\mu^1(\G)$, the scaling
	\begin{equation}
	\label{scaling}
	\G\mapsto t^{-\beta}\G=:\G_t, \qquad u(\cdot)\mapsto t^{\alpha}u(t^\beta\cdot)=:u_t(\cdot)
	\end{equation}
	(with $\alpha$, $\beta$ as in \eqref{alfabeta}) preserves the quantities $\f{\|u'\|_2^2}{\mu^{2\beta+1}}$, $\f{\|u\|_p^p}{\mu^{2\beta+1}}$ for every $t>0$, that is
	\begin{equation*}
		\f{\|u'\|_{L^2(\G)}^2}{\mu^{2\beta+1}}=\f{\|u_t'\|_{L^2(\G_t)}^2}{\left(\|u_t\|_{L^2(\G_t)}^2\right)^{2\beta+1}},\qquad\f{\|u\|_{L^p(\G)}^p}{\mu^{2\beta+1}}=\f{\|u_t\|_{L^p(\G_t)}^p}{\left(\|u_t\|_{L^2(\G_t)}^2\right)^{2\beta+1}}\,,
	\end{equation*}
	so that in particular
	\begin{equation}
		\label{scalE}
		\f{E(u,\G)}{\mu^{2\beta+1}}=\f{E(u_t,\G_t)}{\left(\|u_t\|_{L^2(\G_t)}^2\right)^{2\beta+1}}\,.
	\end{equation}
\end{remark}
\begin{proposition}
	\label{noex-smallmu}
	Let $\G$ be a non--compact graph satisfying Assumption (H) with $N\ge3$ half--lines. If $q>\f{p}{2}+1$, then there exists $\underline{\mu}_{p,q}>0$ such that ground states of $F_{p,q}$ at mass $\mu$ on $\G$ do not exist for every $\mu<\underline{\mu}_{p,q}$.
\end{proposition}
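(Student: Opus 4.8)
The plan is to argue by contradiction. Suppose there is a sequence $\mu_k\to0$ along which a ground state $u_k\in H^1_{\mu_k}(\G)$ exists. Since $\F_{p,q}(\mu_k,\G)<0$ by \eqref{FGmu}, one has $F_{p,q}(u_k,\G)=\F_{p,q}(\mu_k,\G)\le\f12\F_{p,q}(\mu_k,\G)$, so Lemma \ref{limitest-norms}(ii) applies in the present regime ($q>\f p2+1$, $\mu_k$ small) and gives $|u_k(\vv)|\le\|u_k\|_\infty\le C_p\mu_k^{\alpha}$ at every vertex $\vv$. Moreover, being a ground state, $u_k$ satisfies $F_{p,q}(u_k,\G)\le\ee(\mu_k,\R)$ by Corollary \ref{compactcor}, which is precisely the upper bound \eqref{upper} for the standard energy.

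Next I would fix the reference mass. By Proposition \ref{exstarg}(ii) we may pick $m>0$ (any $m<\mu^*(p,q,1,N)$) for which $F_{p,q}$ has no ground state at mass $m$ on the star graph $S_N$; then \eqref{FGmu} and the contrapositive of Proposition \ref{compactth} force $\F_{p,q}(m,S_N)=\ee(m,\R)$, i.e.
\begin{equation*}
E(v,S_N)\ge\ee(m,\R)+\f1q|v(0)|^q\qquad\text{for every }v\in H^1_m(S_N).
\end{equation*}
I then apply the scaling of Remark \ref{rem:omot} with $t_k=m/\mu_k\to+\infty$, turning $u_k$ into $w_k\in H^1_m(\G_k)$, where $\G_k:=t_k^{-\beta}\G$ has the same topology as $\G$ but a compact core of vanishing length $\ell_k=\ell\,t_k^{-\beta}\to0$, so that its diameter tends to $0$ while the $N$ half--lines survive: $\G_k$ degenerates to $S_N$. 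Multiplying \eqref{upper} by $t_k^{2\beta+1}$ and invoking the scale invariance \eqref{scalE} produces the upper bound \eqref{upper2},
\begin{equation*}
E(w_k,\G_k)\le\ee(m,\R)+\Big(\f{\mu_k}{m}\Big)^{\alpha q-(2\beta+1)}\f1q\sum_{\vv\in V}|w_k(\vv)|^q,
\end{equation*}
in which the weight vanishes as $k\to\infty$ because $q>\f p2+1$ (Remark \ref{rem:diagpq}), while $|w_k(\vv)|=t_k^{\alpha}|u_k(\vv)|\le C_pm^{\alpha}$ stays bounded. Since $\ell_k\to0$, the values of $w_k$ at the finitely many vertices of $\G_k$ differ by at most $\sqrt{\ell_k}\,\|w_k'\|_2\to0$, hence they are all asymptotically equal to a common value $\sigma_k$.

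The heart of the argument is a matching lower bound of the form \eqref{lower}, obtained by transferring the star--graph inequality above to $\G_k$. Because $\G_k$ satisfies Assumption (H), has $N\ge3$ half--lines and therefore a vertex of degree $\ge3$, and because its core collapses, I would detach the half--lines from the shrinking core and reglue them at a single point to build a competitor $\widetilde w_k\in H^1_m(S_N)$ with $E(\widetilde w_k,S_N)\le E(w_k,\G_k)+\mathcal R_k$ and $|\widetilde w_k(0)|=\sigma_k+o(\sigma_k)$; here $\mathcal R_k$ collects the surgery error, essentially the $L^p$--mass and the reconnection energy carried by the core, controlled quantitatively by $\ell_k$, $\|w_k'\|_2$ and $\sup_{\K}|w_k|$. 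Plugging $\widetilde w_k$ into the displayed star--graph inequality and writing $|\widetilde w_k(0)|^q$ back in terms of $\sum_{\vv}|w_k(\vv)|^q$ (all vertex values being $\simeq\sigma_k$) yields, for some $c>0$,
\begin{equation*}
E(w_k,\G_k)\ge\ee(m,\R)+c\sum_{\vv\in V}|w_k(\vv)|^q-\mathcal R_k.
\end{equation*}

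Comparing the two displays leaves $\Big(c-\f1q\big(\tfrac{\mu_k}{m}\big)^{\alpha q-(2\beta+1)}\Big)\sum_{\vv}|w_k(\vv)|^q\le\mathcal R_k$, which, since the weight vanishes, reads $\tfrac c2\sum_{\vv}|w_k(\vv)|^q\le\mathcal R_k$ for $k$ large. The whole proof then comes down to showing that the right--hand side is strictly smaller than the left one, and this is the step I expect to be the main obstacle: one must estimate the surgery error $\mathcal R_k$ against the vertex term uniformly in how far the profile of $w_k$ sits from the core. When $\sigma_k$ stays bounded away from $0$ the vertex term is of order $1$ while $\mathcal R_k\to0$, and the contradiction is immediate; the delicate case is $\sigma_k\to0$, the profile escaping to infinity along a half--line, where one has to exploit the precise dependence of $\ell_k$ on $\mu_k$ — the very feature the scaling was designed to create — to prove $\mathcal R_k=o\big(\sum_{\vv}|w_k(\vv)|^q\big)$. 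Once this is established the last inequality is impossible for large $k$, contradicting the existence of $u_k$ and producing the threshold $\underline{\mu}_{p,q}$.
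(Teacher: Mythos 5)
Your scaffolding matches the paper's: contradiction along $\mu_k\to0$, the a priori bounds of Lemma \ref{limitest-norms}, the scaling of Remark \ref{rem:omot} to a fixed mass below the star--graph threshold of Proposition \ref{exstarg}, the upper bound \eqref{upper2}, and the strict inequality $E(v,S_N)>\ee(m,\R)+\frac1q|v(0)|^q$ coming from non--existence on the star graph. But the proof has a genuine gap exactly where you flag ``the main obstacle'': the surgery estimate $\mathcal R_k=o\bigl(\sum_{\vv}|w_k(\vv)|^q\bigr)$ is never established, and this is where essentially all of the work in the paper's argument lies (Steps 1--3 of Proposition \ref{noex-smallmu}). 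Worse, the premise you use to set it up is misleading: the vertex values of $w_k$ are asymptotically equal only in the \emph{additive} sense ($|w_k(\vv)-w_k(\ww)|\le\sqrt{\ell_k}\,\|w_k'\|_2\to0$), which says nothing when the values themselves vanish. The paper's first comparison with $S_3$ (via the rearrangement of Lemma \ref{rearr1}, which pins the glued vertex at the \emph{minimum} value $\lambda_\mu=\min_{\vv\in V^+}w_\mu(\vv)$, not at a common value) yields \eqref{hpcase2}, i.e.\ precisely that in the problematic case $\lambda_\mu=o(\Lambda_\mu)$ with $\Lambda_\mu=\max_{\K_\mu}w_\mu$: multiplicatively the vertex/core values are \emph{not} comparable, so your conversion of $|\widetilde w_k(0)|^q$ into $c\sum_{\vv}|w_k(\vv)|^q$ fails in exactly the case that matters.

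The mechanism that actually closes the argument is also different in spirit from what you propose. Rather than showing a reconnection error is small, the paper extracts a \emph{positive} remainder: the gap $\Lambda_\mu-\lambda_\mu\sim\Lambda_\mu$ must be traversed on a compact core of length $|\K_\mu|=\widetilde\ell\mu^\beta\to0$, forcing $E(w_\mu,\K_\mu)\gtrsim\Lambda_\mu^2/(\widetilde\ell\mu^\beta)$, and this is played against the vanishing vertex gain $\mu^{\alpha q-(2\beta+1)}\Lambda_\mu^q$ through the auxiliary competitor $z_\mu$ on $S_3$ (built from Lemma \ref{rearr-halflines} with a linear junction of length $d\mu^\beta$ and the choice $d=2\widetilde\ell$). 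The resulting inequality $\mu^{\alpha q-(\beta+1)}\Lambda_\mu^{q-2}\gtrsim1$ is then contradicted by Remark \ref{rem:diagpq} and $\Lambda_\mu\to0$. Without this (or an equivalent quantitative lower bound of type \eqref{lower}), your final display cannot be contradicted, so the proof as written is incomplete.
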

\begin{proof}
	We argue by contradiction. Suppose that there exists a sequence of masses (still denoted by $\mu$, omitting the subscript of the sequence) $\mu\to 0$ so that a ground state $u_{\mu}$ of $F_{p,q}$ at mass $\mu$ exists. With no loss of generality, let $u_\mu>0$. Since $u_{\mu}$ is a ground state at mass $\mu$, by \eqref{eq:F leq E} and \eqref{Ephimu} we have
	\begin{equation}
	\label{Egs}
	E(u_{\mu},\G)\le  -\theta_{p}\mu^{2\beta+1}+ \f{1}{q}\sum_{\vv\in V}|u_\mu(\vv)|^{q}.
	\end{equation}
	Let now $\mu^{*}=\mu^{*}(p,q,1,3)$ be the critical mass associated to $F_{p,q,1}$ on the star graph $S_3$ with 3 half--lines as in Proposition \ref{exstarg} (recall that $F_{p,q,1}$ coincides with $F_{p,q}$) and fix $\widetilde{\mu}<\mu^{*}$. For every $\mu$, making use of \eqref{scaling} with $t=\f{\widetilde{\mu}}{\mu}$ and $u=u_\mu$, let then
	\begin{equation*}
	\G_{\mu}:=\left(\f{\mu}{\widetilde{\mu}}\right)^{\beta}\G,\qquad w_{\mu}(x):=\left(\f{\widetilde{\mu}}{\mu}\right)^{\alpha}u_{\mu}\left(\left(\f{\widetilde{\mu}}{\mu}\right)^{\beta}x\right)\,.
	\end{equation*}
	Clearly, $w_\mu\in H_{\widetilde{\mu}}^1(\G_{{\mu}})$ for every $\mu$. Observe that the compact core $\K_{\mu}$ of $\G_{\mu}$ has total length $|\K_{\mu}|=\widetilde{\ell}\mu^{\beta}$, where $\widetilde{\ell}=|\K|\widetilde{\mu}^{-\beta}$, so that in particular $|\K_\mu|\to0$ as $\mu\to0$.
	
	By \eqref{scalE} and \eqref{Egs} we get
	\begin{equation}
	\label{Eomo}
	E(w_{\mu},\G_{\mu})\le -\theta_{p}\widetilde{\mu}^{2\beta+1}+\left(\f{\mu}{\widetilde{\mu}}\right)^{\alpha q-(2\beta+1)}\f{1}{q}\sum_{\vv\in V}|w_{\mu}(\vv)|^{q}.
	\end{equation}
	Moreover, since $u_{\mu}$ is a ground state at sufficiently small mass $\mu$ and $q>\f p2+1$, by \eqref{kinen-mu}, \eqref{infnorm-mu} and Remark \ref{rem:omot} there exists $C>0$ (depending only on $p$) such that
	\begin{equation}
	\label{kinen-mu-omo}
	C^{-1}\widetilde{\mu}^{2\beta+1}\le\|w_{\mu}'\|_2^{2} \le C\widetilde{\mu}^{2\beta+1}
	\end{equation}
	and
	\begin{equation}
	\label{infnorm-mu-omo}
	C^{-1}\widetilde{\mu}^{\alpha}\le \|w_{\mu}\|_{\infty} \le C\widetilde{\mu}^{\alpha}.
	\end{equation}
	Let us now introduce the quantities
	\[
	\lambda_{\mu}:=\min_{\vv\in V^{+}}w_{\mu}(\vv)\quad\text{and}\quad \Lambda_{\mu}:=\max_{\K_{\mu}}w_{\mu}\,,
	\]
	(recall that $V^+$ is the set of vertices attached to at least one half--line).
	
	For the sake of clarity and to improve the readability, we divide the rest of the proof in some steps.

	{\em Step 1.} Since $\G_\mu$ satisfies Assumption $(H)$ (because $\G$ does) and it has $N\geq3$ half--lines, by Lemma \ref{rearr1}, there exists $w_{\mu}^{*}\in H^{1}_{\widetilde{\mu}}(S_{3})$ on the star graph $S_3$ with 3 half--lines such that 
	\begin{equation}
	\label{Ewmustar}
	E(w_{\mu}^{*}, S_{3})\le E(w_{\mu},\G_{\mu})
	\end{equation}
	and 
	\begin{equation}
	\label{wmustar0}
	w_{\mu}^{*}(0)=\lambda_{\mu}.
	\end{equation}  
	On the one hand, combining \eqref{Eomo} with \eqref{Ewmustar}--\eqref{wmustar0} then leads to
	\begin{equation}
	\label{Fwmustar-upbound}
	\begin{split}
	F_{p,q}(w_{\mu}^{*}, S_{3})&= E(w_{\mu}^{*}, S_{3})-\f{1}{q}|w_{\mu}^{*}(0)|^{q}\le E(w_{\mu},\G_{\mu})-\f{\lambda_{\mu}^{q}}{q}\\
	&\le -\theta_{p} \widetilde{\mu}^{2\beta+1}+\left(\f{\mu}{\widetilde{\mu}}\right)^{\alpha q-(2\beta+1)}\f1q\sum_{\vv\in V}|w_{\mu}(\vv)|^{q}-\f{\lambda_{\mu}^{q}}{q}\\
	&\le -\theta_{p} \widetilde{\mu}^{2\beta+1}+\left(\f{\mu}{\widetilde{\mu}}\right)^{\alpha q-(2\beta+1)}\f{n}{q}\Lambda_{\mu}^{q}-\f{\lambda_{\mu}^{q}}{q}.
	\end{split}
	\end{equation}
	On the other hand, since $\widetilde{\mu}<\mu^{*}=\mu^{*}(p,q,1,3)$, by Proposition \ref{exstarg} and Corollary \ref{compactcor}
	\begin{equation}
	\label{Fwmustar-lowbound}
	F_{p,q}(w_{\mu}^{*}, S_{3})>-\theta_{p} \widetilde{\mu}^{2\beta+1}.
	\end{equation}
	Coupling \eqref{Fwmustar-upbound} and \eqref{Fwmustar-lowbound} gives
	\begin{equation}
	\label{hpcase2}
	\left(\f{\mu}{\widetilde{\mu}}\right)^{\alpha q-(2\beta+1)}n \Lambda_{\mu}^{q}>\lambda_{\mu}^{q}.
	\end{equation}
	Furthermore, letting $\overline{x}\in \K_{\mu}$ realize $w_\mu(\overline{x})=\Lambda_{\mu}$ and $\overline{\vv}\in V$ be a vertex such that $w_{\mu}(\overline{\vv})=\lambda_{\mu}$, since $\K_\mu$ is connected there exists a trail $\gamma\subset \K_{\mu}$ starting at $\overline{x}$ and ending at $\overline{\vv}$. Thus 
	\begin{equation}
	\label{Mmu-Lmu}
	\Lambda_{\mu}-\lambda_{\mu}=w_{\mu}(\overline{x})-w_{\mu}(\overline{\vv})=\int_{\gamma} w'_{\mu}\,dx\le \left|\K_{\mu}\right|^{\f{1}{2}}\|w'_{\mu}\|_{L^{2}(\K_{\mu})},
	\end{equation}
	that coupled with \eqref{kinen-mu-omo}, \eqref{hpcase2} (recall also Remark \ref{rem:diagpq}) and $|\K_\mu|\to0$ as $\mu\to0$ entails
	\begin{equation} 
	\label{Mmuto0}
	\Lambda_{\mu}\to 0\quad\text{as}\quad \mu\to 0.
	\end{equation}
	
	\begin{figure}[t]
		\centering
		\subfloat{\includegraphics[width=0.49\textwidth]{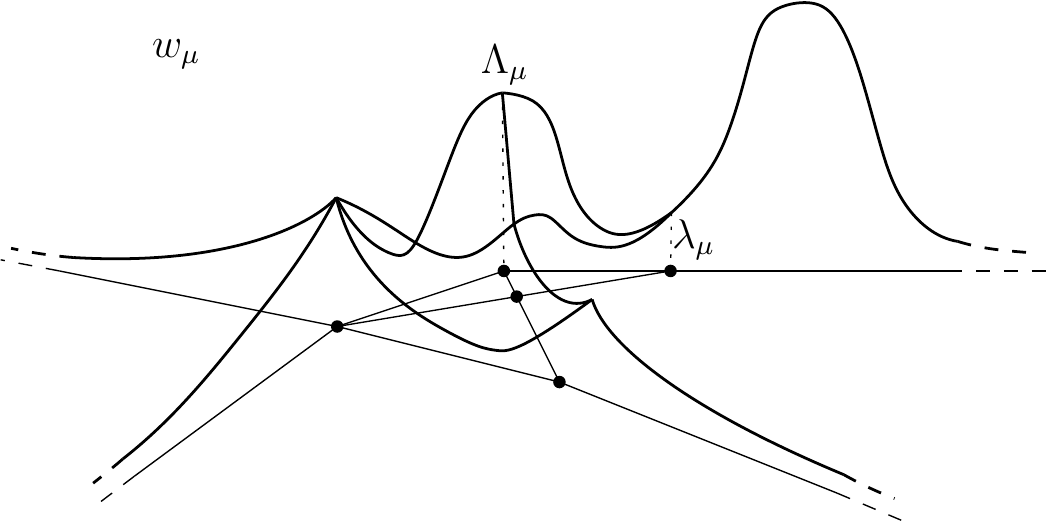}}
		\,\,
		\subfloat{\includegraphics[width=0.49\textwidth]{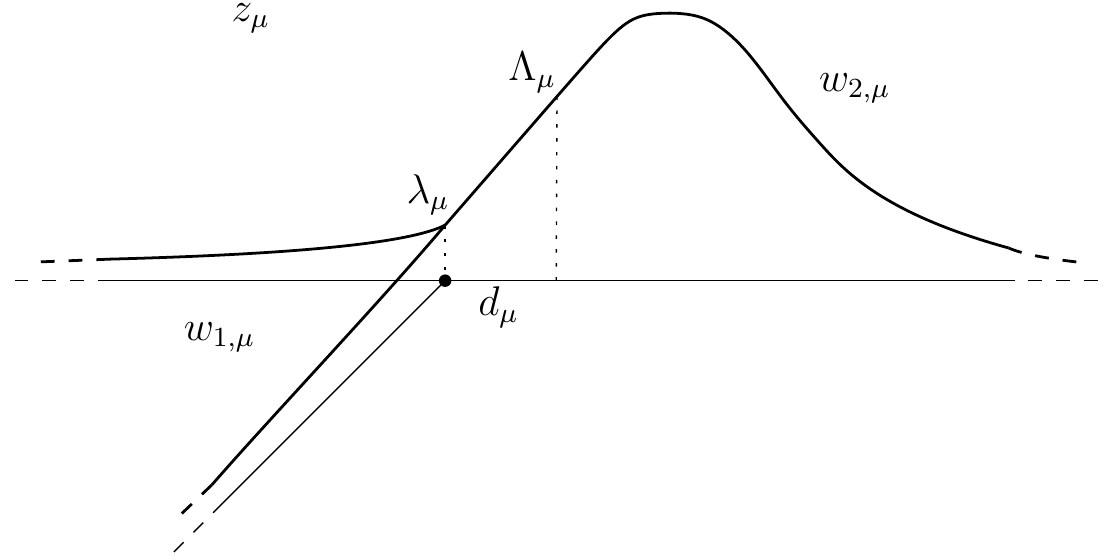}}
		\caption{The construction of the function $z_\mu$ as in Step 2 of the proof of Proposition \ref{noex-smallmu}.}
		\label{fig:zmu1}
	\end{figure}

	{\em Step 2.} By \eqref{infnorm-mu-omo} and \eqref{Mmuto0}, $\|w_{\mu}\|_{L^{\infty}(\G_{\mu})}\ge C^{-1}\widetilde{\mu}^{\alpha}>\Lambda_{\mu}=\|w_{\mu}\|_{L^{\infty}(\K)}$ for sufficiently small $\mu$. Hence, by Lemma \ref{rearr-halflines}, there exist two positive functions $w_{1,\mu},w_{2,\mu}\in H^{1}(\R^{+})$ satisfying
	\[
	\begin{split}
	&2\|w_{1,\mu}\|_{L^2(\R^+)}^2+\|w_{2,\mu}\|_{L^2(\R^+)}^2=\|w_\mu\|_{L^2(\G_{\mu}\setminus\K_{\mu})}^2\\
	&2E(w_{1,\mu},\R^+)+E(w_{2,\mu},\R^+)\leq\,E(w_\mu,\G_\mu\setminus\K_\mu)\\
	&w_{1,\mu}(0)=\lambda_\mu,\qquad w_{2,\mu}(0)=\Lambda_\mu\,.
	\end{split}
	\]
	Consider then the following construction. Denoting by $h_1,h_2,h_3$ the half--lines of the star graph $S_3$, define the function $z_{\mu}\in H^{1}(S_{3})$ as (see Figure \ref{fig:zmu1})
	\begin{equation*}
	z_{\mu}(x):=\begin{cases}
	w_{1,\mu}(x) & \text{if }x\in h_{i}\text{ for some }i=1,2\\
	\lambda_{\mu}+\f{\Lambda_{\mu}-\lambda_{\mu}}{d_{\mu}}x & \text{if }x\in[0,d_{\mu})\cap h_3,\\
	w_{2,\mu}(x-d_{\mu}) & \text{if }x\in[d_\mu,+\infty)\cap h_3,
	\end{cases}
	\end{equation*}
	where $d_{\mu}:=d\mu^{\beta}$, $d>0$ fixed, will be properly chosen later. By construction, $z_{\mu}$ is decreasing on $h_{1}$ and $h_{2}$, while on $h_{3}$ it is increasing (with a linear part at the beginning) until a maximum point and then decreasing from this point on. Moreover, as $z_\mu(x)\leq \Lambda_\mu$ for every $x\in[0,d_\mu)\cap h_3$,
	\begin{equation*}
	\begin{split}
	\|z_{\mu}\|_{L^{2}(S_{3})}^{2}&=2\|w_{1,\mu}\|_{L^2(\R^+)}^{2}+\|w_{2,\mu}\|_{L^2(\R^+)}^{2}+\int_{[0,d_\mu)\cap h_3}|z_{\mu}|^{2}\,dx\\
	&\le \|w_{\mu}\|_{L^2(\G_{\mu}\setminus \K_{\mu})}^{2}+d_{\mu}\Lambda_{\mu}^{2}=\widetilde{\mu}-\|w_{\mu}\|_{L^2(\K_\mu)}^{2}+d\mu^{\beta}\Lambda_{\mu}^{2}\\
	&\le \widetilde{\mu}-\widetilde{\ell}\mu^{\beta}\left(\min_{\K_{\mu}} w_{\mu}\right)^{2}+d\mu^{\beta}\Lambda_{\mu}^{2},
	\end{split}
	\end{equation*}
	so that, since by \eqref{hpcase2}
	\[
	\min_{\K_{\mu}} w_{\mu}\le \lambda_{\mu}=o\left(\Lambda_{\mu}\right)\quad \text{as}\quad \mu\to 0, 
	\]
	we obtain
	\begin{equation}
	\label{mass-est1}
	\|z_{\mu}\|_{L^{2}(S_{3})}^{2}\le \widetilde{\mu}+d\mu^{\beta}\Lambda_{\mu}^{2}+o\left(\mu^{\beta}\Lambda_{\mu}^{2}\right).
	\end{equation}
	Let us now estimate $F_{p,q}(z_{\mu}, S_{3})$. On the one hand,  if $\mu$ is sufficiently small, then \eqref{mass-est1} and $\widetilde{\mu}<\mu^*$ ensure $\|z_{\mu}\|_{L^{2}(S_{3})}^{2}<\mu^{*}$, so that by Proposition \ref{exstarg} with $\tau=1$ and Corollary \ref{compactcor} 
	\begin{equation*}
	F_{p,q}(z_{\mu}, S_{3})>\F_{p,q}\left(\|z_{\mu}\|_{2}^{2},S_{3}\right)=-\theta_{p}\left(\|z_{\mu}\|_{2}^{2}\right)^{2\beta+1},
	\end{equation*}
	and by \eqref{mass-est1} again
	\begin{equation}
	\label{estFfrombe}
	\begin{split}
	F_{p,q}(z_{\mu}, S_{3})&\ge -\theta_{p}\left(\widetilde{\mu}+d\mu^{\beta}\Lambda_{\mu}^{2}\right)^{2\beta+1}+o\left(\mu^{\beta}\Lambda_{\mu}^{2}\right)\\
	&=-\theta_{p}\widetilde{\mu}^{2\beta+1}-d(2\beta+1)\widetilde{\mu}^{2\beta}\mu^{\beta}\Lambda_{\mu}^{2}+o\left(\mu^{\beta}\Lambda_{\mu}^{2}\right)\quad\text{as}\quad \mu\to 0.
	\end{split}
	\end{equation}
	On the other hand, exploiting the construction of $z_\mu$ and the properties of $w_{1,\mu},w_{2,\mu}$,
	\begin{equation}
	\label{estEfromab}
	\begin{split}
	E(z_{\mu},S_{3})&=E\left(z_{\mu},S_{3}\setminus \left([0,d_{\mu})\cap h_3\right)\right)+E\left(z_{\mu}, [0,d_{\mu})\cap h_3\right)\\
	&\le E(w_{\mu}, \G_{\mu}\setminus\K_{\mu})+\f{\left(\Lambda_{\mu}-\lambda_{\mu}\right)^{2}}{2d\mu^{\beta}}-\f{d\mu^{\beta}}{p}\lambda_{\mu}^{p}\\
	&\le E(w_{\mu}, \G_{\mu}\setminus\K_{\mu})+\f{\Lambda_{\mu}^{2}}{2d\mu^{\beta}}+o\left(\f{\Lambda_{\mu}^{2}}{\mu^{\beta}}\right),
	\end{split}
	\end{equation}
	the last inequality relying  also on \eqref{Mmuto0}. Coupling \eqref{estEfromab} with \eqref{Eomo} then yields
	\begin{equation}
	\label{estFfromab}
	\begin{split}
	F_{p,q}(z_{\mu}, S_{3})&\le  E(w_{\mu}, \G_{\mu}\setminus\K_{\mu})+\f{\Lambda_{\mu}^{2}}{2d\mu^{\beta}}-\f{\lambda_{\mu}^{q}}{q}+o\left(\f{\Lambda_{\mu}^{2}}{\mu^{\beta}}\right)\\
	&=  E(w_{\mu},\G_{\mu})-E(w_{\mu},\K_{\mu})+\f{\Lambda_{\mu}^{2}}{2d\mu^{\beta}}-\f{\lambda_{\mu}^{q}}{q}+o\left(\f{\Lambda_{\mu}^{2}}{\mu^{\beta}}\right)\\
	&\le  -\theta_{p}\widetilde{\mu}^{2\beta+1}+\left(\f{\mu}{\widetilde{\mu}}\right)^{\alpha q-(2\beta+1)}\f{1}{q}\sum_{\vv\in V}|w_{\mu}(\vv)|^{q}-E(w_{\mu},\K_{\mu})\\
	&\quad+\f{\Lambda_{\mu}^{2}}{2d\mu^{\beta}}-\f{\lambda_{\mu}^{q}}{q}+o\left(\f{\Lambda_{\mu}^{2}}{\mu^{\beta}}\right)\\
	&\le  -\theta_{p}\widetilde{\mu}^{2\beta+1}+C\mu^{\alpha q-(2\beta+1)}\Lambda_{\mu}^{q}-E(w_{\mu},\K_{\mu})+\f{\Lambda_{\mu}^{2}}{2d\mu^{\beta}}+o\left(\f{\Lambda_{\mu}^{2}}{\mu^{\beta}}\right)
	\end{split}
	\end{equation}
	for some constant $C>0$.
	
	{\em Step 3.} By comparing \eqref{estFfrombe} and \eqref{estFfromab} and noting that $\mu^{\beta}\Lambda_{\mu}^{2}=o\left(\f{\Lambda_{\mu}^{2}}{\mu^{\beta}}\right)$ as $\mu\to 0$, we obtain
	\begin{equation}
	\label{lastineq}
	C\mu^{\alpha q-(2\beta+1)}\Lambda_{\mu}^{q}\ge E(w_{\mu},\K_{\mu})-\f{\Lambda_{\mu}^{2}}{2d\mu^{\beta}}+o\left(\f{\Lambda_{\mu}^{2}}{\mu^{\beta}}\right).
	\end{equation}
	Let us now estimate the term $E(w_{\mu},\K_{\mu})$. By \eqref{Mmu-Lmu}, 
	\begin{equation*}
	\|w'_{\mu}\|^{2}_{L^{2}(\K_{\mu})}\ge \f{(\Lambda_{\mu}-\lambda_{\mu})^{2}}{|\K_{\mu}|}=\f{\Lambda_{\mu}^{2}}{\widetilde{\ell}\mu^{\beta}}+o\left(\f{\Lambda_{\mu}^{2}}{\mu^{\beta}}\right)
	\end{equation*}
	and
	\begin{equation*}
	\|w_{\mu}\|^{p}_{L^{p}(\K_{\mu})}\le \widetilde{\ell}\mu^{\beta}\Lambda_{\mu}^{p}\le C'\mu^{\beta}\|w'_{\mu}\|_{L^2(\K_\mu)}^{\f p2}=o\left(\|w'_{\mu}\|^{2}_{L^{2}(\K_{\mu})}\right)
	\end{equation*}
	for a suitable $C'>0$. Therefore,
	\begin{equation*}
	E(w_{\mu},\K_{\mu})=\f{1}{2}\|w'_{\mu}\|^{2}_{L^{2}(\K_{\mu})}+o\left(\|w'_{\mu}\|^{2}_{L^{2}(\K_{\mu})}\right)\ge \f{\Lambda_{\mu}^{2}}{2\widetilde{\ell}\mu^{\beta}}+o\left(\f{\Lambda_{\mu}^{2}}{\mu^{\beta}}\right)
	\end{equation*}
	and plugging into \eqref{lastineq} we get
	\begin{equation*}
	C\mu^{\alpha q-(2\beta+1)}\Lambda_{\mu}^{q}\ge \f{1}{2}\left(\f{1}{\widetilde{\ell}}-\f{1}{d}\right)\f{\Lambda_{\mu}^{2}}{\mu^{\beta}}+o\left(\f{\Lambda_{\mu}^{2}}{\mu^{\beta}}\right)\,.
	\end{equation*}
	Choosing for instance $d=2\widetilde{\ell}$ then leads to
	\begin{equation*}
	\mu^{\alpha q-(\beta+1)}\Lambda_{\mu}^{q-2}\ge \f{1}{4C\widetilde{\ell}}\quad\text{as }\mu\to0,
	\end{equation*}
	that is a contradiction in light of Remark \ref{rem:diagpq}, thus concluding the proof.
\end{proof}

\begin{proposition}
	\label{noex-largemu}
	Let $\G$ be a non--compact graph satisfying Assumption (H) with no vertex of degree smaller than $3$. If $q<\f p2+1$,  then there exists $\overline{\mu}_{p,q}>0$ such that ground states of $F_{p,q}$ at mass $\mu$ on $\G$ do not exist for every $\mu>\overline{\mu}_{p,q}$.
\end{proposition}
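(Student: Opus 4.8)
The plan is to argue by contradiction along the lines of Proposition \ref{noex-smallmu}, but with $\mu\to+\infty$ in place of $\mu\to 0$, and to replace the global reduction to $S_3$ (which there rested on having $\ge 3$ half--lines) by a \emph{local} reduction at a vertex of degree $\ge 3$, since here the hypothesis only prescribes the degrees. Assume ground states $u_\mu>0$ exist along a sequence $\mu\to+\infty$. As in \eqref{Egs} one has $E(u_\mu,\G)\le-\theta_p\mu^{2\beta+1}+\frac1q\sum_{\vv\in V}|u_\mu(\vv)|^q$, and since $q<\frac p2+1$ and $\mu$ is large Lemma \ref{limitest-norms}(i) provides \eqref{kinen-mu}--\eqref{infnorm-mu}. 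Fixing $\widetilde\mu>\mu^*(p,q,1,3)$ — the star threshold of Proposition \ref{exstarg}, beyond which $F_{p,q}$ has no ground state on $S_3$ when $q<\frac p2+1$ — and scaling by $t=\widetilde\mu/\mu$ via \eqref{scaling}, I obtain $w_\mu\in H^1_{\widetilde\mu}(\G_\mu)$ with $\G_\mu=(\mu/\widetilde\mu)^\beta\G$, satisfying \eqref{Eomo}, \eqref{kinen-mu-omo}, \eqref{infnorm-mu-omo}. Because $\alpha q-(2\beta+1)<0$ and $\mu\to+\infty$, the focusing prefactor $(\mu/\widetilde\mu)^{\alpha q-(2\beta+1)}$ in \eqref{Eomo} again tends to $0$, so $E(w_\mu,\G_\mu)\to\ee(\widetilde\mu,\R)$ and, by \eqref{infnorm-mu-omo}, the mass of $w_\mu$ concentrates into an order--one bump. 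The only, but decisive, structural difference with the small--mass case is that now $|\K_\mu|=\widetilde\ell\,\mu^\beta\to+\infty$: the bump lives inside a compact core whose edges become arbitrarily long.

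Since the edges of $\G_\mu$ diverge while the bump stays of size $O(1)$, near any vertex $\overline{\vv}$ the graph looks, on the scale of the bump, like a star $S_{\deg(\overline{\vv})}$ with $\deg(\overline{\vv})\ge3$ half--lines. This is what I would exploit to build a competitor on $S_3$: localizing $w_\mu$ at $\overline{\vv}$ and rearranging the long edges issuing from it onto three half--lines produces $z_\mu\in H^1(S_3)$ with $z_\mu(0)=w_\mu(\overline{\vv})$, $\|z_\mu\|_2^2=\widetilde\mu+o(1)$ and $E(z_\mu,S_3)\le E(w_\mu,\G_\mu)+o(1)$, whence $F_{p,q}(z_\mu,S_3)\le\ee(\widetilde\mu,\R)+o(1)-\frac1q w_\mu(\overline{\vv})^q$. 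On the other hand, as soon as $\|z_\mu\|_2^2>\mu^*$, Proposition \ref{exstarg}(i) and Corollary \ref{compactcor} force $F_{p,q}(z_\mu,S_3)>-\theta_p(\|z_\mu\|_2^2)^{2\beta+1}=\ee(\widetilde\mu,\R)+o(1)$, exactly as in \eqref{Fwmustar-lowbound}. Comparing the two bounds yields $w_\mu(\overline{\vv})^q=o(1)$, i.e. the bump value at $\overline{\vv}$ must vanish. The contradiction is therefore reduced to exhibiting a degree--$\ge3$ vertex at which the concentrating bump carries an order--one value: there $\frac1q w_\mu(\overline{\vv})^q$ is bounded away from $0$ and beats the vanishing right--hand side.

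The crux, and the step I expect to be the main obstacle, is thus to control where the bump concentrates relative to the vertices. If it sat in the interior of a single long edge, far from every vertex, all vertex values would be exponentially small and the comparison above would be vacuous; one must then instead quantify the defect produced by the branching at the degree--$\ge3$ vertices reached by the bump, i.e. obtain the improved lower bound \eqref{lower}. Here the mechanism is the same arithmetic that drives Proposition \ref{noex-smallmu}: the rearrangement/Pólya--Szegő defect created at a branch point is \emph{quadratic} in the relevant vertex value, whereas the focusing term in \eqref{upper} is of order $q$; since $q>2$, for small vertex values the quadratic defect dominates the order--$q$ gain, and \eqref{lower} contradicts \eqref{upper2}/\eqref{Eomo}. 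In other words, whether the bump concentrates at a vertex (then the $S_3$ comparison above applies directly) or in an edge interior (then the quadratic branching defect at its exit vertices applies), one always produces a remainder $\mathcal R$ strictly larger than the focusing gain.

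The remaining work is bookkeeping parallel to Proposition \ref{noex-smallmu}: recording that $E(w_\mu,\G_\mu)\to\ee(\widetilde\mu,\R)$ while the bump is $O(1)$, splitting $\G_\mu$ at the relevant vertex by a variant of Lemma \ref{rearr-halflines} adapted to long edges rather than to half--lines, and checking that the mass lost in the rearrangement is $o(1)$, so that the star comparison is performed at a mass exceeding $\mu^*$. As before, all the exponent algebra is governed by the sign of $\alpha q-(2\beta+1)$ (Remark \ref{rem:diagpq}); what changes with respect to the small--mass statement is only the geometric input — degree $\ge3$ together with concentration, instead of $\ge3$ half--lines together with spreading — which is precisely why the hypothesis here bears on the degrees of the vertices rather than on the number of half--lines.
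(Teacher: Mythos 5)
Your skeleton --- contradiction, scaling by $t=\widetilde\mu/\mu$ with $\widetilde\mu>\mu^*(p,q,1,3)$, comparison with $S_3$, and the need for a quantitative remainder $\mathcal R$ beating the vertex gain $(\widetilde\mu/\mu)^{2\beta+1-\alpha q}\sum_{\vv}|w_\mu(\vv)|^q$ --- matches the paper, and you correctly flag the main obstacle (all vertex values may be small, so a soft comparison is vacuous). But the mechanism you propose to close the gap does not work in this regime. You claim the branching at a degree--$\ge 3$ vertex produces a P\'olya--Szeg\H{o}/traversal defect that is \emph{quadratic} in the vertex value and hence dominates the order--$q$ gain because $q>2$. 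That is the arithmetic of Proposition \ref{noex-smallmu}, where the defect is $\Lambda_\mu^2/|\K_\mu|$ and wins because $|\K_\mu|\to 0$. Here the scaling is reversed: $|\K_\mu|=\widetilde\ell\,\mu^\beta\to+\infty$, so any such defect is of size $\Lambda^2/\mu^\beta$, and comparing it with the gain $\mu^{\alpha q-(2\beta+1)}\Lambda^q$ requires $\Lambda^{2-q}\gtrsim\mu^{\alpha q-(2\beta+1)+\beta}=\mu^{\alpha(q-2)}$ (using $2\alpha=\beta+1$), i.e.\ $\Lambda\lesssim\mu^{-\alpha}$. No such rate is available --- the argument only yields $\Lambda\to 0$ --- so the quadratic defect does not dominate and the contradiction does not close.

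The paper's remainder is of a different nature: it is a \emph{mass} defect carried by a long edge, not a gradient defect. One takes the vertex $\vv_1$ maximizing $w_\mu$ over $V$ and an edge $\overline e_\mu$ emanating from it that avoids the trail through the global maximum (this uses degree $\ge 3$); a first $S_3$--comparison shows $\overline e_\mu$ cannot be a half--line, hence $|\overline e_\mu|\sim\mu^\beta$. A second $S_3$--comparison (appending a unit linear tail to absorb $\sigma_\mu:=\inf_{\overline e_\mu}w_\mu$) yields the quantitative relation $\Sigma_\mu^q\lesssim\sigma_\mu^2$ with $\Sigma_\mu:=\max_{\overline e_\mu}w_\mu\ge\max_\vv w_\mu(\vv)$, whence $\overline e_\mu$ carries mass at least $c_1\mu^\beta\sigma_\mu^2\gtrsim\mu^\beta\Sigma_\mu^q$. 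Subtracting this mass from the rest of the graph raises $E(w_\mu,\G_\mu\setminus\overline e_\mu)$ above $-\theta_p\widetilde\mu^{2\beta+1}$ by an amount $\sim\mu^\beta\Sigma_\mu^q$, while the edge itself costs only $O(\mu^\beta\Sigma_\mu^p)=o(\mu^\beta\Sigma_\mu^q)$. This remainder is of order $q$ in $\Sigma_\mu$ but carries the prefactor $\mu^\beta$, and it beats the vertex gain by the diverging factor $\mu^{3\beta+1-\alpha q}$. Your proposal is missing all three of these steps (ruling out that the extra edge is a half--line, the auxiliary comparison giving $\Sigma_\mu^q\lesssim\sigma_\mu^2$, and the mass--subtraction lower bound), and the quadratic heuristic you offer in their place fails precisely because the edges lengthen rather than shrink under the large--mass scaling.
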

\begin{proof}
	We argue by contradiction. Suppose that there exists a sequence of masses (still denoted by $\mu$, omitting the subscript of the sequence) $\mu\to +\infty$ so that a ground state $u_{\mu}$ of $F_{p,q}$ at mass $\mu$ exists. With no loss of generality let $u_\mu>0$. Since $u_{\mu}$ is a ground state at mass $\mu$, \eqref{Egs} holds.
	
	Let $\mu^{*}=\mu^{*}(p,q,1,3)$ be the critical mass associated to $F_{p,q,1}$ on the star graph $S_3$ with 3 half--lines as in Proposition \ref{exstarg} (recall that $F_{p,q,1}$ coincides with $F_{p,q}$) and fix $\widetilde{\mu}>\mu^{*}$. For every $\mu$, making use of \eqref{scaling} with $t=\f{\widetilde{\mu}}{\mu}$ and $u=u_\mu$, let then
	\begin{equation*}
	\G_{\mu}:=\left(\f{\mu}{\widetilde{\mu}}\right)^{\beta}\G,\qquad w_{\mu}(x):=\left(\f{\widetilde{\mu}}{\mu}\right)^{\alpha}u_{\mu}\left(\left(\f{\widetilde{\mu}}{\mu}\right)^{\beta}x\right)\,,
	\end{equation*}
	so that $\K_{\mu}$ has total length $|\K_{\mu}|=\widetilde{\ell}\mu^{\beta}\to +\infty$ as $\mu\to +\infty$ (with $\widetilde{\ell}:=|\K|\widetilde{\mu}^{-\beta}$) and, for every $\mu$, we have $w_\mu\in H_{\widetilde{\mu}}^1(\G_{{\mu}})$ and, by \eqref{scalE} and \eqref{Egs},
	\begin{equation}
	\label{Eomo2}
	E(w_{\mu},\G_{\mu})\le -\theta_{p}\widetilde{\mu}^{2\beta+1}+\left(\f{\widetilde{\mu}}{\mu}\right)^{2\beta+1-\alpha q}\f{1}{q}\sum_{\vv\in V}|w_{\mu}(\vv)|^{q}.
	\end{equation}
	The remainder of the proof is divided in three steps.
	
	\begin{figure}[t]
		\centering
		\subfloat{\includegraphics[width=0.48\textwidth]{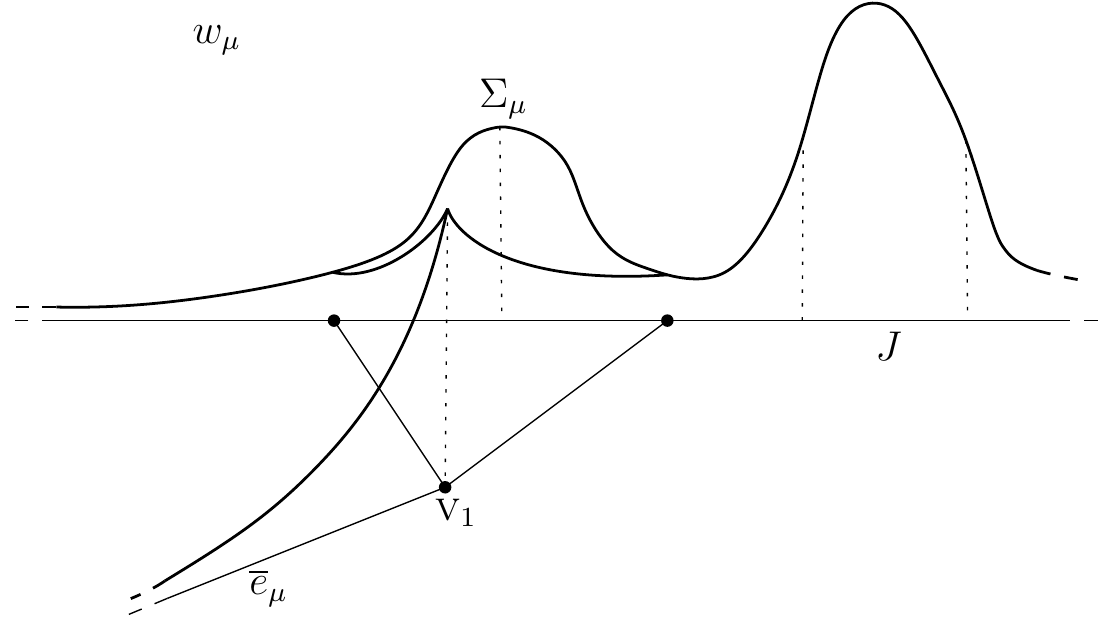}}
		\,\,
		\subfloat{\includegraphics[width=0.48\textwidth]{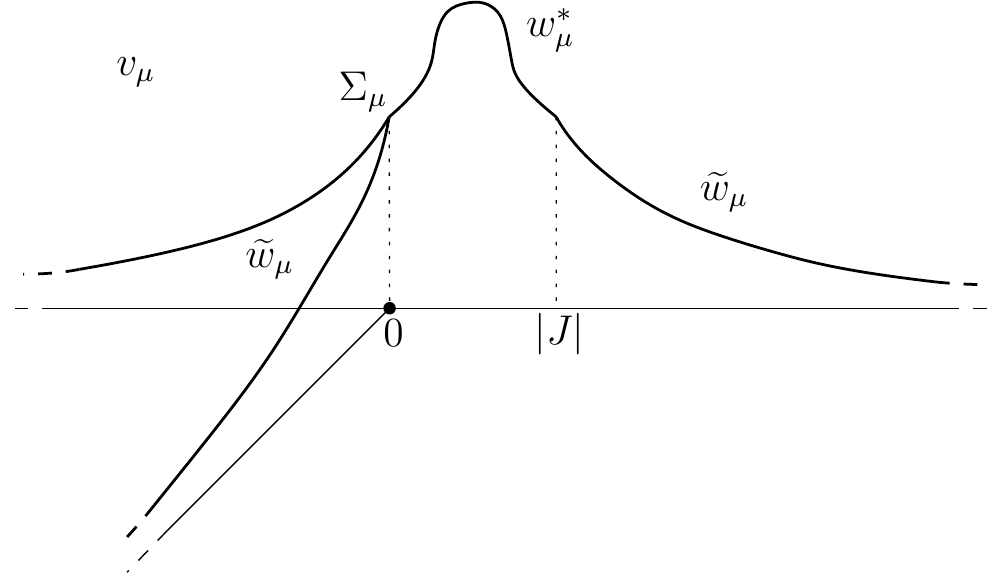}}
		\caption{The construction of the function $v_\mu$ as in Step 1 of the proof of Proposition \ref{noex-largemu}.}
		\label{fig:vmu}
	\end{figure}
	
	{\em Step 1.} Let $\overline{x}_\mu\in\G_\mu$ be such that $w_\mu(\overline{x}_\mu)=\|w_\mu\|_{\infty}$. By Assumption (H), there exists a trail $\gamma_{\mu}\subset\G_\mu$ running through $\overline{x}_\mu$ and exactly 2 half--lines of $\G_\mu$. Let then $\vv_1\in V$ be a vertex of $\G_\mu$ satisfying $w_{\mu}(\vv_{1})=\max_{\vv\in V}w_{\mu}(\vv)$. Since by hypothesis every vertex of $\G_\mu$ has degree greater than or equal to $3$, there is at least one edge, say $\overline{e}_{\mu}$, emanating from $\vv_{1}$ that does not belong to $\gamma_{\mu}$. 
	Set
	\[
	\sigma_{\mu}:=\inf_{\overline{e}_{\mu}}w_{\mu}\quad\text{and}\quad \Sigma_{\mu}:=\max_{\overline{e}_{\mu}}w_{\mu}\,.
	\]
	Let us first show that $\overline{e}_\mu$ must be a bounded edge of $\G_\mu$ for every $\mu$ large enough. Assume on the contrary that there exists a subsequence (not renamed) $\mu\to +\infty$ along which $\overline{e}_{\mu}$ is a half--line. Letting $J:=\{x\in \G_{\mu}\,:\,w_{\mu}(x)>\Sigma_{\mu}\}$, then $w_{\mu}(J)$ is connected and, since $w_\mu(\gamma_\mu)=(0,\|w_\mu\|_\infty]$ and $\gamma_\mu$ is a trail, every $t\in w_{\mu}(J)$ is attained at least twice, except possibly $\|w_{\mu}\|_{\infty}$. Let then $w^{*}_{\mu}\in H^{1}\left(-\f{|J|}{2},\f{|J|}{2}\right)$ be the symmetric rearrangement of $w_{\mu|J}$ on $\left(-\f{|J|}2,\f{|J|}2\right)$, which satisfies
	\begin{equation*}
	\|w_{\mu}'\|_{L^2(J)}\geq\|(w^{*}_{\mu})'\|_{L^2\left(-\f{|J|}{2},\f{|J|}{2}\right)}\,,\quad\|w_{\mu}\|_{L^r(J)}=\|w^{*}_{\mu}\|_{L^r\left(-\f{|J|}{2},\f{|J|}{2}\right)}\quad\forall r\geq1.
	\end{equation*}
	Moreover, $w_\mu\left(\G_{\mu}\setminus J\right)$ is connected too and every $t\in w_{\mu}(\G_{\mu}\setminus J)$ is attained at least three times (at least twice on $\gamma_\mu$ and at least once on $\overline{e}_\mu$). Thus, taking the symmetric rearrangement $\widetilde{w_{\mu}}\in H^{1}(S_{3})$ of $w_{\mu|(\G_{\mu}\setminus J)}$ on the star graph $S_3$ with 3 half--lines (see \cite[Appendix A]{acfn_jde}) we have 
	\begin{equation*}
	\|w_{\mu}'\|_{L^2(\G_{\mu}\setminus J)}\geq\|\widetilde{w_{\mu}}'\|_{L^2\left(S_{3}\right)}\,,\quad\|w_{\mu}\|_{L^r(\G_{\mu}\setminus J)}=\|\widetilde{w_{\mu}}\|_{L^r\left(S_{3}\right)}\quad\forall r\geq1.
	\end{equation*}
	Denoting as usual by $h_1,h_2,h_3$ the half--lines of $S_3$, we then define $v_{\mu}\in H^{1}_{\widetilde{\mu}}(S_{3})$ as (see Figure \ref{fig:vmu})
	\begin{equation*}
	v_{\mu}(x):=
	\begin{cases}
	\widetilde{w_{\mu}}(x)\quad &x\in h_1\cup h_2\\
	w^{*}_{\mu}\left(x-\f{|J|}{2}\right)\quad &x\in [0,|J|)\cap h_3\\
	\widetilde{w_{\mu}}(x-|J|)\quad &x\in [|J|, +\infty)\cap h_3,
	\end{cases}
	\end{equation*}
	so that, by \eqref{Eomo2} and for $\mu$ sufficiently large
	\begin{equation*}
	\begin{split}
	F_{p,q}(v_{\mu}, S_{3})&=E(v_{\mu}, S_{3})-\f{1}{q}|v_{\mu}(0)|^{q}\le E(w_{\mu}, \G_{\mu})-\f{\Sigma_{\mu}^{q}}{q}\\
	&\le -\theta_{p}\widetilde{\mu}^{2\beta+1}+\left(\f{\widetilde{\mu}}{\mu}\right)^{2\beta+1-\alpha q}\f{1}{q}\sum_{\vv\in V}|w_{\mu}(\vv)|^{q}-\f{\Sigma_{\mu}^{q}}{q}\\
	&\le -\theta_{p}\widetilde{\mu}^{2\beta+1}-\f{\Sigma_{\mu}^{q}}{q}\left(1-n\left(\f{\widetilde{\mu}}{\mu}\right)^{2\beta+1-\alpha q}\right)<-\theta_{p}\widetilde{\mu}^{2\beta+1},
	\end{split}
	\end{equation*}
	the last inequality being a direct consequence of Remark \ref{rem:diagpq} and $\mu\to+\infty$. By Corollary \ref{compactcor} this implies existence of ground states of $F_{p,q}$ on $S_3$ at mass $\widetilde{\mu}>\mu^*=\mu^*(p,q,1,3)$, which is impossible by Proposition \ref{exstarg}. Hence, for every $\mu$ large enough $\overline{e}_\mu$ is a bounded edge.
	
	\begin{figure}[t]
		\centering
		\subfloat{\includegraphics[width=0.48\textwidth]{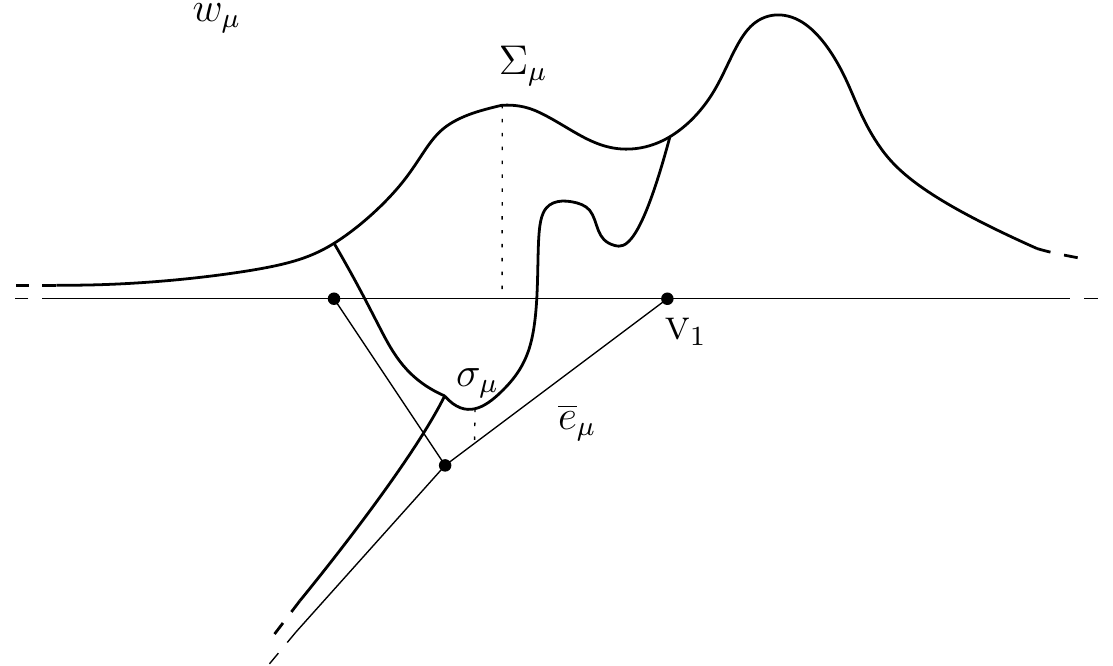}}
		\,\,
		\subfloat{\includegraphics[width=0.48\textwidth]{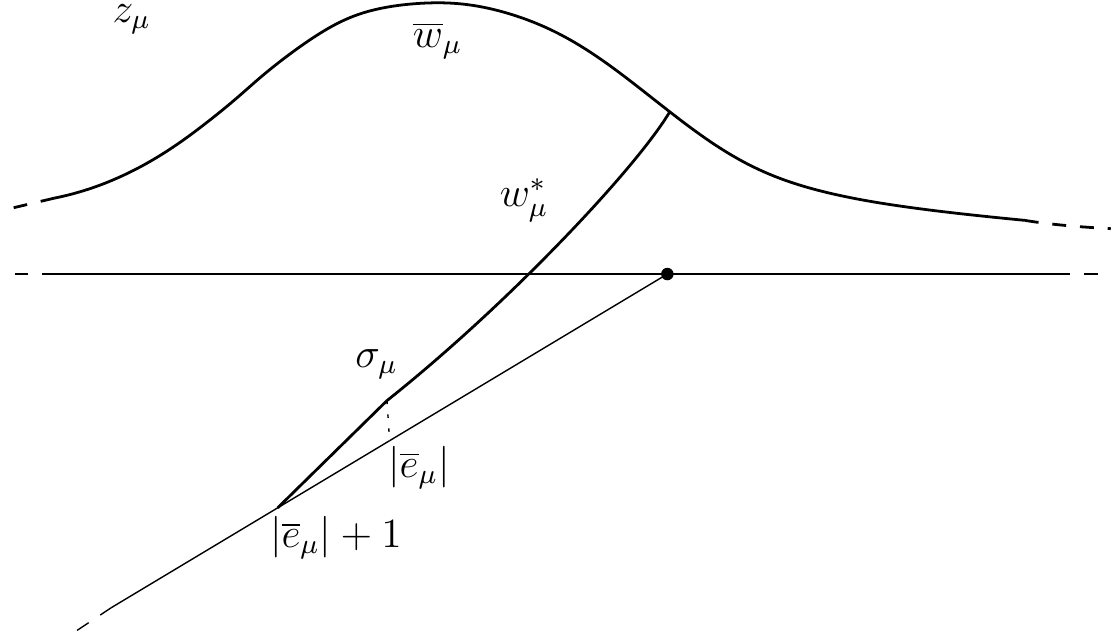}}
		\caption{The construction of the function $z_\mu$ as in Step 2 of the proof of Proposition \ref{noex-largemu}.}
		\label{fig:zmudim2}
	\end{figure}
	
	{\em Step 2.} Since $\overline{e}_\mu$ is a bounded edge, then by definition of $\G_\mu$ there exist two constants $c_1,c_2>0$ so that
	\begin{equation*}
	c_{1}\mu^{\beta}\le|\overline{e}_{\mu}|\le c_{2}\mu^{\beta}\,.
	\end{equation*} 
	Moreover, as $\widetilde{\mu}=\|w_\mu\|_{L^2(\G_\mu)}^2\ge \|w_{\mu}\|_{L^{2}(\overline{e}_{\mu})}^{2}\ge  c_{1}\mu^{\beta}\sigma_{\mu}^{2}$, it follows that $\sigma_{\mu}\to 0$ as $\mu\to +\infty$. 
	
	Consider then the following construction. Recall that $w_{\mu}(\G_{\mu}\setminus \overline{e}_{\mu})$ is connected and, since $\gamma_\mu\subset\G_{\mu}\setminus \overline{e}_{\mu}$ and $w_\mu(\gamma_\mu)=(0,\|w_\mu\|_\infty]$, every value $t\in w_{\mu}(\G_{\mu}\setminus \overline{e}_{\mu})$ is attained at least twice, except possibly $\|w_{\mu}\|_{\infty}$. Letting $\overline{w}_\mu\in H^{1}(\R)$ be the symmetric rearrangement on $\R$ of the restriction of $w_{\mu}$ to $\G_{\mu}\setminus\overline{e}_{\mu}$, we have  
	\[
	\|w_{\mu}'\|_{L^2(\G_{\mu}\setminus \overline{e}_{\mu})}\geq\|\overline{w}_\mu'\|_{L^2(\R)}\,,\quad\|w_{\mu}\|_{L^r(\G_{\mu}\setminus \overline{e}_{\mu})}=\|\overline{w}_\mu\|_{L^r(\R)}\quad\forall r\geq1.
	\]
	Moreover, denoting by $w_{\mu}^*\in H^{1}\left(0,|\overline{e}_{\mu}|\right)$ the decreasing rearrangement of the restriction of $w_{\mu}$ to $\overline{e}_{\mu}$, we also get 
	\[
	\|w_{\mu}'\|_{L^2(\overline{e}_{\mu})}\geq\|(w_{\mu}^*)'\|_{L^2(\overline{e}_{\mu})}\,,\quad\|w_{\mu}\|_{L^r(\G_{\mu}\setminus \overline{e}_{\mu})}=\|w_{\mu}^*\|_{L^r(\overline{e}_{\mu})}\quad\forall r\geq1
	\]
	and 
	\begin{equation*}
	w_{\mu}^*(0)=\Sigma_{\mu}\,,\quad w_{\mu}^*(|\overline{e}_{\mu}|)=\sigma_{\mu}.
	\end{equation*}
	Observe that, since $w_{\mu}^*(0)\le \|\overline{w}_\mu\|_{L^{\infty}(\R)}$, there exists $\xi\in \R$ such that $\overline{w}_\mu(\xi)=w_{\mu}^*(0)$.  Hence, letting as usual $h_1,h_2,h_3$ be the half--lines of $S_3$, we set $z_{\mu}\in H^{1}(S_{3})$ as (see Figure \ref{fig:zmudim2})
	\begin{equation*}
	z_{\mu}(x):=
	\begin{cases}
	\overline{w}_{\mu}(x+\xi)\quad &x\in h_{1}\cup h_{2}\\
	w_{\mu}^*(x)\quad &x\in [0,|\overline{e}_{\mu}|)\cap h_3\\
	\sigma_{\mu}(|\overline{e}_{\mu}|+1-x)\quad &x\in [|\overline{e}_{\mu}|, |\overline{e}_{\mu}|+1)\cap h_3\\
	0\quad &\text{elsewhere}\,.
	\end{cases}
	\end{equation*}
	Note that $z_{\mu}(0)=\Sigma_{\mu}$ and 
	\begin{equation*}
	\begin{split}
	\|z_{\mu}\|_{L^{2}(S_{3})}^{2}&=\int_{S_{3}\setminus\left([|\overline{e}_{\mu}|,|\overline{e}_{\mu}|+1)\cap h_3\right)} |z_{\mu}|^{2}\,dx+\int_{[|\overline{e}_{\mu}|,|\overline{e}_{\mu}|+1)\cap h_3}|z_{\mu}|^{2}\,dx\\
	&=\|w_{\mu}\|_{L^2(\G_\mu)}^{2}+\int_{0}^{1} |\sigma_{\mu}x|^{2}\,dx = \widetilde{\mu}+\f{\sigma_{\mu}^{2}}{3}.
	\end{split}
	\end{equation*}
	
	\noindent On the one hand, since $\|z_{\mu}\|_{L^{2}(S_{3})}^{2}\ge \widetilde{\mu}>\mu^{*}=\mu^*(p,q,1,3)$, by Proposition \ref{exstarg} and Corollary \ref{compactcor}
	\begin{equation}
	\label{Fgmulowbound}
	\begin{split}
	F_{p,q}(z_{\mu}, S_{3})&\ge -\theta_{p}\left(\|z_{\mu}\|_{L^{2}(S_{3})}^{2}\right)^{2\beta+1}\ge -\theta_{p}\left(\widetilde{\mu}+\f{\sigma_{\mu}^{2}}{3}\right)^{2\beta+1}\\
	&=-\theta_{p}\widetilde{\mu}^{2\beta+1}-\f{\theta_{p}}{3}(2\beta+1)\widetilde{\mu}^{2\beta}\sigma_{\mu}^{2}+o\left(\sigma_{\mu}^{2}\right).
	\end{split}
	\end{equation}
	On the other hand, 
	\begin{equation*}
	\begin{split}
	E(z_{\mu},S_{3}) &\le E(w_{\mu}, \G_{\mu})+\f{1}{2}\int_{0}^{1}|(\sigma_{\mu}x)'|^{2}\,dx-\f{1}{p}\int_{0}^{1}|\sigma_{\mu}x|^{p}\,dx\\
	&=E(w_{\mu}, \G_{\mu})+\f{\sigma_{\mu}^{2}}{2}-\f{\sigma_{\mu}^{p}}{p(p+1)}=E(w_{\mu}, \G_{\mu})+\f{\sigma_{\mu}^{2}}{2}+o\left(\sigma_{\mu}^{2}\right),
	\end{split}
	\end{equation*}
	so that, recalling \eqref{Eomo2}, the fact that $\Sigma_{\mu}\geq w_\mu(\vv_1)=\max_{\vv\in V}w_\mu(\vv)$ by construction, and that $2\beta+1-\alpha q>0$ by Remark \ref{rem:diagpq}
	\begin{equation}
	\label{Fgmuupbound}
	\begin{split}
	F_{p,q}(z_{\mu}, S_{3})&=E(z_{\mu}, S_{3})-\f{1}{q}|z_{\mu}(0)|^{q} \le E(w_{\mu}, \G_{\mu})+\f{\sigma_{\mu}^{2}}{2}+o\left(\sigma_{\mu}^{2}\right) -\f{\Sigma_{\mu}^{q}}{q}\\
	&\le -\theta_{p}\widetilde{\mu}^{2\beta+1}+\f{n}{q}\left(\f{\widetilde{\mu}}{\mu}\right)^{2\beta+1-\alpha q}\Sigma_{\mu}^{q}+\f{\sigma_{\mu}^{2}}{2}+o\left(\sigma_{\mu}^{2}\right)-\f{\Sigma_{\mu}^{q}}{q}\\
	&=-\theta_{p}\widetilde{\mu}^{2\beta+1}-\f{\Sigma_{\mu}^{q}}{q}+o\left(\Sigma_{\mu}^{q}\right)+\f{\sigma_{\mu}^{2}}{2}+o\left(\sigma_{\mu}^{2}\right).
	\end{split}
	\end{equation}
	Comparing \eqref{Fgmulowbound} and \eqref{Fgmuupbound}, we get
	\begin{equation}
	\label{hp1case2}
	\f{\Sigma_{\mu}^{q}}{q}+o\left(\Sigma_{\mu}^{q}\right)\le \left(\f{1}{2}+\f{\theta_{p}}{3}(2\beta+1)\widetilde{\mu}^{2\beta}\right)\sigma_{\mu}^{2}+o\left(\sigma_{\mu}^{2}\right)\quad \text{as}\quad \mu \to +\infty\,.
	\end{equation}
	
	{\em Step 3. }Since $\sigma_\mu\to0$ as $\mu\to+\infty$, by \eqref{hp1case2} also $\Sigma_\mu\to0$ as $\mu\to+\infty$ and there exists $C>0$ so that
	\begin{equation*}
	\begin{split}
	&\|w_{\mu}\|^{2}_{L^{2}(\overline{e}_{\mu})}\ge c_{1}\mu^{\beta}\sigma_{\mu}^{2}\ge C\mu^{\beta}\Sigma_{\mu}^{q}\\
	&\|w_{\mu}\|^{2}_{L^{2}(\G_{\mu}\setminus\overline{e}_{\mu})}\le \widetilde{\mu}-C\mu^{\beta}\Sigma_{\mu}^{q}\,.
	\end{split}
	\end{equation*}
	Moreover, 
	\begin{equation*}
	\begin{split}
	E(w_{\mu}, \G_{\mu}\setminus \overline{e}_{\mu})&\ge E(z_{\mu},h_{1}\cup h_{2})\ge \Eps\left(\widetilde{\mu}-C\mu^{\beta}\Sigma_{\mu}^{q},\R\right)=-\theta_{p}\left(\widetilde{\mu}-C\mu^{\beta}\Sigma_{\mu}^{q}\right)^{2\beta+1}\\
	&=-\theta_{p}\widetilde{\mu}^{2\beta+1}+\theta_{p}\widetilde{\mu}^{2\beta}(2\beta+1)C\mu^{\beta}\Sigma_{\mu}^{q}+o\left(\mu^{\beta}\Sigma_{\mu}^{q}\right)
	\end{split}
	\end{equation*}
	and
	\begin{equation*}
	E(w_{\mu}, \overline{e}_{\mu})\ge-\f{\|w_{\mu}\|^{p}_{L^{p}(\overline{e}_{\mu})}}{p}\ge-\f{c_{2}\mu^{\beta}\Sigma_{\mu}^{p}}{p}\,.
	\end{equation*}
	Hence, since $q<\f{p}{2}+1<p$ and $\Sigma_{\mu}\to 0$ as $\mu\to +\infty$
	\begin{equation}
	\label{EwmuGmu}
	\begin{split}
	E(w_{\mu}, \G_{\mu})&\ge -\theta_{p}\widetilde{\mu}^{2\beta+1}+\theta_{p}\widetilde{\mu}^{2\beta}(2\beta+1)C\mu^{\beta}\Sigma_{\mu}^{q}-\f{c_{2}\mu^{\beta}\Sigma_{\mu}^{p}}{p}+o\left(\mu^{\beta}\Sigma_{\mu}^{q}\right)\\
	&=-\theta_{p}\widetilde{\mu}^{2\beta+1}+\theta_{p}\widetilde{\mu}^{2\beta}(2\beta+1)C\mu^{\beta}\Sigma_{\mu}^{q}+o\left(\mu^{\beta}\Sigma_{\mu}^{q}\right).
	\end{split}
	\end{equation}
	By \eqref{Eomo2} and \eqref{EwmuGmu}, it follows that
	\begin{equation*}
	\theta_{p}\widetilde{\mu}^{2\beta}(2\beta+1)C\mu^{\beta}\Sigma_{\mu}^{q}+o\left(\mu^{\beta}\Sigma_{\mu}^{q}\right)\le \f{1}{q} \left(\f{\widetilde{\mu}}{\mu}\right)^{2\beta+1-\alpha q} \sum_{\vv\in V}|w_{\mu}(\vv)|^{q} \le \f{n}{q} \left(\f{\widetilde{\mu}}{\mu}\right)^{2\beta+1-\alpha q}\Sigma_{\mu}^{q}, 
	\end{equation*}
	which entails
	\begin{equation}
	\label{boundmu}
	1+o\left(1\right)\le C'\mu^{-(3\beta+1)+\alpha q} \quad\text{as}\quad \mu\to +\infty\,.
	\end{equation}
	By Remark \ref{rem:diagpq}, since $q<\f p2+1$, then $3\beta+1>2\beta+1>\alpha q$, and \eqref{boundmu} provides the contradiction we seek.
\end{proof}
\begin{proof}[End of the proof of Theorem \ref{3half-deg3}: non--existence.]
	It is the content of Propositions \ref{noex-smallmu}--\ref{noex-largemu}.
\end{proof}



\section{Metric results: proof of Theorems \ref{thm:metric1}--\ref{thm:metric2}--\ref{thm:metric3}}
\label{sec:metr}

This section provides the proof of the metric results of the paper. Before proving our main theorems in this context, let us state the following straightforward lemma.
\begin{lemma}
\label{exRcompactsupp}
Let $p\in(2,6), q\in(2,4)$. Then there exists $\overline{\mu}>0$ (depending on $p,q$) such that for every $\mu\ge\overline{\mu}$ there is a function $f_{\mu}\in H^{1}_{\mu}(\R)$ compactly supported in $[-1,1]$ and satisfying 
\begin{equation*}
F_{p,q}(f_{\mu}, \R)<E(\phi_{\mu},\R) .
\end{equation*}
\end{lemma}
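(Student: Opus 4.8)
The plan is to mimic the cut-off construction already used in the proofs of Proposition~\ref{deg3-largemu} and Theorem~\ref{3half-deg2}, now carried out on $\R$ with the fixed interval $[-1,1]$ in place of a generic $(-L,L)$. The guiding idea is that, since $\beta>0$, as $\mu\to+\infty$ the soliton $\phi_\mu(x)=\mu^\alpha\phi_1(\mu^\beta x)$ concentrates around the origin: truncating it outside $[-1,1]$ therefore removes only an exponentially small amount of mass and energy, whereas its height at the vertex, $\phi_\mu(0)=\mu^\alpha\phi_1(0)$, diverges. Since on $\R$ one has $F_{p,q}(f,\R)=E(f,\R)-\f1q|f(0)|^q$ and the delta term enters with a favourable sign, for large $\mu$ this growing term will beat the vanishing truncation error.

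Concretely, first I would put $\delta:=\phi_\mu(1)$ and $g_\mu:=(\phi_\mu-\delta)_+$; because $\phi_\mu$ is even and strictly decreasing away from the origin, $g_\mu$ is supported exactly on $(-1,1)$. I would then normalise, setting $f_\mu:=\kappa\, g_\mu$ with $\kappa:=\big(\mu/\|g_\mu\|_{L^2(\R)}^2\big)^{1/2}$, so that $f_\mu\in H_\mu^1(\R)$ is compactly supported in $[-1,1]$, as required.

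The crux is to show that the cut-off is asymptotically harmless, namely
\begin{equation*}
E(f_\mu,\R)-E(\phi_\mu,\R)\longrightarrow 0\qquad\text{and}\qquad f_\mu(0)-\phi_\mu(0)\longrightarrow 0\qquad\text{as }\mu\to+\infty.
\end{equation*}
This is the only delicate point, and the main obstacle: both $E(\phi_\mu,\R)=-\theta_p\mu^{2\beta+1}$ and $\phi_\mu(0)$ blow up, so mere $H^1$-closeness of $f_\mu$ to $\phi_\mu$ would not by itself suffice to conclude that the differences vanish. The mechanism is instead quantitative: using the exponential decay of $\phi_1$, the discarded quantity $\delta=\mu^\alpha\phi_1(\mu^\beta)$, the removed mass $\mu-\|g_\mu\|_2^2$ and hence $\kappa-1$ are all exponentially small in $\mu$, and these exponential factors dominate the polynomial growth factors $\mu^{2\beta+1}$, $\mu^\alpha$, $\mu^{\alpha p}$ that appear when one expands the differences of the kinetic, $L^p$ and pointwise terms. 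This is precisely the estimate carried out in the proof of Theorem~\ref{3half-deg2}, specialised to $L=1$.

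Finally I would conclude as there. Writing
\begin{equation*}
F_{p,q}(f_\mu,\R)-E(\phi_\mu,\R)=\big(E(f_\mu,\R)-E(\phi_\mu,\R)\big)-\f1q|f_\mu(0)|^q,
\end{equation*}
the first summand is $o(1)$ by the previous step, while $f_\mu(0)=\phi_\mu(0)+o(1)=\phi_1(0)\mu^\alpha+o(1)$ yields $\f1q|f_\mu(0)|^q=\f{|\phi_1(0)|^q}{q}\mu^{\alpha q}+o(\mu^{\alpha q})\to+\infty$, since $\alpha q>0$. Hence the right-hand side tends to $-\infty$, and in particular there exists $\overline{\mu}>0$ such that $F_{p,q}(f_\mu,\R)<E(\phi_\mu,\R)$ for every $\mu\ge\overline{\mu}$, which is the claim.
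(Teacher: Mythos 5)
Your proposal is correct and follows essentially the same route as the paper: truncate and renormalise the soliton to a function supported in $[-1,1]$, show the truncation error in the energy and in the value at the origin vanishes as $\mu\to+\infty$, and let the diverging pointwise term $-\f1q|f_\mu(0)|^q\sim-\f{|\phi_1(0)|^q}{q}\mu^{\alpha q}$ close the argument. The only difference is that you correctly flag, and justify via the exponential decay of $\phi_1$, the quantitative estimate behind the convergence $E(f_\mu,\R)-E(\phi_\mu,\R)\to0$, which the paper dismisses as ``immediate to check''.
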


\begin{proof}

\noindent Let $\phi_{\mu}$ be the soliton at mass $\mu$ on $\R$ as in \eqref{eq:phimu} and choose $\delta=\delta(\mu)$ and $\kappa=\kappa(\mu)$ such that the function $f_\mu(x):=\kappa(\phi_{\mu}(x)-\delta)_{+}$ is compactly supported in $[-1,1]$ and $\|f_\mu\|_{2}^{2}=\mu$. It is immediate to check that $\delta\to 0$, $\kappa\to 1$ and $f_\mu-\phi_{\mu}\to 0$ strongly in $H^{1}(\R)$ as $\mu\to +\infty$. This entails that $E(f_\mu, \R)-E(\phi_{\mu}, \R)\to 0$ and $f_\mu(0)-\phi_{\mu}(0)\to 0$ as $\mu\to +\infty$. Hence, fixing $\varepsilon>0$ small enough, there exists $\overline{\mu}>0$ such that for every $\mu\geq\overline{\mu}$
\begin{equation*}
F_{p,q}(f_{\mu},\G)-E(\phi_{\mu},\R)=E(f_\mu,\R)-E(\phi_{\mu},\R)-\f{1}{q}|f_\mu(0)|^{q}\le \varepsilon-\f{1}{q}|\phi_{\mu}(0)|^{q}<0\,.\qedhere
\end{equation*}
\end{proof}

The rest of the section is organized in two subsections. 

\subsection{Graphs with at least 3 half--lines: proof of Theorem \ref{thm:metric1}} To prove Theorem \ref{thm:metric1} we consider, for $l>0$, graphs $G_l$ as in Figure \ref{fig:metr1}. Let $\vv$ be a vertex of degree 2 from which emanate two bounded edges, say $e_1,e_2$, both of length $l$. Let $\vv_1$ be the vertex of $e_1$ not shared with $e_2$, and let $\vv_2$ be the vertex of $e_2$ not shared with $e_1$. Then, a couple of half--lines emanates both from $\vv_1$ and from $\vv_2$. In particular, we denote by $\HH_{1},\HH_2$ the half-lines emanating from $\vv_{1}$ and by $\HH_{3},\HH_{4}$ those emanating from $\vv_{2}$. Clearly, the graph $\G_l$ satisfies the hypotheses of Theorem \ref{thm:metric1}.

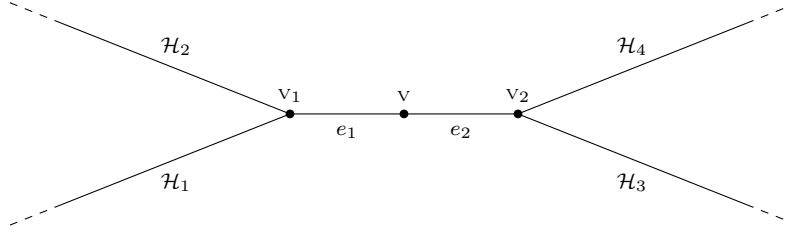
\begin{figure}[t]
	\centering
	
	\begin{tikzpicture}[xscale=0.5,yscale=0.6]
	\node at (0,0) [nodo] {};
	\node at (-3,0) [nodo] {};
	\node at (3,0) [nodo] {};
	\draw (0,0)--(-3,0);
	\draw (0,0)--(3,0);
	\draw (-3,0)--(-9,2);
	\draw (-3,0)--(-9,-2);
	\draw[dashed] (-9,2)--(-10.5,2.5);
	\draw[dashed] (-9,-2)--(-10.5,-2.5);
	\draw (3,0)--(9,2);
	\draw (3,0)--(9,-2);
	\draw[dashed] (9,2)--(10.5,2.5);
	\draw[dashed] (9,-2)--(10.5,-2.5);	
	
	\node at (0,.4) [infinito] {$\scriptstyle\vv$};
	\node at (-3,.4) [infinito] {$\scriptstyle\vv_1$};
	\node at (3,.4) [infinito] {$\scriptstyle\vv_2$};
	\node at (-1.5,-.4) [infinito] {$\scriptstyle e_1$};
	\node at (1.5,-.4) [infinito] {$\scriptstyle e_2$};
	\node at (-6,-1.5) [infinito] {$\scriptstyle \HH_1$};
	\node at (-6,1.5) [infinito] {$\scriptstyle\HH_2$};
	\node at (6,-1.5) [infinito] {$\scriptstyle \HH_3$};
	\node at (6,1.5) [infinito] {$\scriptstyle\HH_4$};
	
	\end{tikzpicture}
	\caption{The graph $\G_l$ in the proof of Theorem \ref{thm:metric1}: a single vertex with degree $2$ attached to two bounded edges of equal length $l$, from the other endpoints of which emanate two couples of half--lines.}
	\label{fig:metr1}
\end{figure}

The following two propositions highlight the dependence of the ground states problem \eqref{eq:problem} for $\G_l$ on the length $l$ of the bounded edges emanating from the vertex of degree $2$.
\begin{proposition}
	\label{prop:metrex}
	Let $\G_l$ be the graph in Figure \ref{fig:metr1}, with $e_1,e_2$ each of length $l$, and $q<\f p2+1$. Then there exists $\overline{l}>0$ (depending on $p,q$) so that, for every $l\geq\overline{l}$, ground states of $F_{p,q}$ at mass $\mu$ on $\G_l$ exist for every $\mu$.
\end{proposition}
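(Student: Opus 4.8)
The plan is to verify the existence criterion of Corollary \ref{compactcor}: for every $\mu>0$ I will exhibit a competitor $u\in H^1_\mu(\G_l)$ with $F_{p,q}(u,\G_l)\le\Eps(\mu,\R)$, provided the edge length $l$ exceeds a threshold $\overline{l}$ \emph{independent of} $\mu$. The natural competitor is built from the real line ground state $\eta_\mu$ of Proposition \ref{exline}, placed so that its maximum sits at the degree--$2$ vertex $\vv$. Concretely, I identify the bi--infinite trail $\HH_1\cup e_1\cup e_2\cup\HH_3$ with a copy of $\R$ (so that $\vv$ corresponds to $0$ and $\vv_1,\vv_2$ to $\pm l$) and set $u=\eta_\mu$ there; on the two remaining half--lines $\HH_2,\HH_4$ emanating from $\vv_1,\vv_2$ I extend $u$ by the reflected tails $x\mapsto\eta_\mu(l+x)$, $x\ge0$, which keeps $u$ continuous and makes the energy bookkeeping completely explicit. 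Finally I renormalize to mass $\mu$.

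With this choice, before renormalization the competitor carries mass $\mu+2\int_l^\infty\eta_\mu^2\,dx$ and
\begin{equation*}
F_{p,q}(u,\G_l)=F_{p,q}(\eta_\mu,\R)+\int_l^\infty|\eta_\mu'|^2\,dx-\f2p\int_l^\infty|\eta_\mu|^p\,dx-\f2q|\eta_\mu(l)|^q,
\end{equation*}
the last two terms coming from the two extra copies of the tail and from the delta bonuses at $\vv_1,\vv_2$. Since $F_{p,q}(\eta_\mu,\R)<E(\phi_\mu,\R)=\Eps(\mu,\R)$ by \eqref{Fpq<Ephi}, and comparing $\eta_\mu$ with the admissible test function $\phi_\mu$ yields the quantitative gap
\begin{equation*}
\Eps(\mu,\R)-F_{p,q}(\eta_\mu,\R)\ge\f{|\phi_1(0)|^q}{q}\mu^{\alpha q}>0,
\end{equation*}
the whole matter reduces to showing that, for $l\ge\overline{l}$ and every $\mu>0$, the positive kinetic tail $\int_l^\infty|\eta_\mu'|^2\,dx$ (together with the lower order cost of the renormalization, which is controlled by $\int_l^\infty\eta_\mu^2\,dx$) stays below the sum of this gap and the favourable concentrated term $\f2q|\eta_\mu(l)|^q$.

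I expect this uniform--in--$\mu$ estimate to be the heart of the argument, and I would prove it by splitting the mass range. For $\mu$ in a fixed compact subset of $(0,+\infty)$ the competitor depends continuously on $\mu$ and $\eta_\mu$ decays exponentially at a rate bounded below, so all tails vanish uniformly as $l\to+\infty$ while the gap stays bounded away from $0$; this range is harmless. For large $\mu$ the ground state concentrates like $\phi_\mu$, with decay rate $\sim\mu^\beta$, so by the scaling \eqref{eq:phimu} the tail $\int_l^\infty|\eta_\mu'|^2\,dx$ is of order $\mu^{2\beta+1}e^{-c\mu^\beta l}$, which the exponential factor makes uniformly negligible against the polynomial gap once $l$ is large. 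The genuinely delicate regime is $\mu\to0$: here the power nonlinearity becomes negligible and $\eta_\mu$ is essentially the symmetric profile $\eta_\mu(0)e^{-\kappa|x|}$ of the purely concentrated problem, with $\kappa\to0$, so that the exponential factor $e^{-\kappa l}$ no longer helps and the crude bound $\int_l^\infty|\eta_\mu'|^2\,dx\le\|\eta_\mu'\|_2^2$ is too lossy.

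The main obstacle is precisely this small--mass analysis, and the way out is that the favourable term $|\eta_\mu(l)|^q$ is of the \emph{same} order as the kinetic tail, not smaller. The delta--dominated asymptotics (which is the correct regime here, since $q<\f p2+1$ is equivalent to the concentrated scaling $\mu^{q/(4-q)}$ dominating the solitonic one $\mu^{2\beta+1}$) give the balances $|\eta_\mu(0)|^q\simeq2\|\eta_\mu'\|_2^2$ and $\Eps(\mu,\R)-F_{p,q}(\eta_\mu,\R)\simeq\f{4-q}{2q}\|\eta_\mu'\|_2^2$, so that, after dividing by $\|\eta_\mu'\|_2^2$, the required inequality becomes
\begin{equation*}
\f12e^{-2\kappa l}-\f4qe^{-q\kappa l}<\f{4-q}{2q},
\end{equation*}
which holds for every $\kappa l\ge0$ and every $q\in(2,4)$ (the left hand side is negative near $\kappa l=0$ and its single maximum stays below the right hand side). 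Thus the concentrated bonus at $\vv_1,\vv_2$ compensates the kinetic tail uniformly as $\mu\to0$, for every $l$. Combining the three regimes produces a single $\overline{l}$ with $F_{p,q}(u,\G_l)<\Eps(\mu,\R)$ for all $\mu$, and Corollary \ref{compactcor} concludes. The one substantial technical point still to be filled in is making these decay estimates and the Nehari--type identity for $\eta_\mu$ rigorous and uniform as $\mu\to0$; the soft bound $\|\eta_\mu'\|_2^2=o(\mu)$ from the proof of Proposition \ref{exline} is not by itself sharp enough.
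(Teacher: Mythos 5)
Your overall strategy---verify the criterion of Corollary \ref{compactcor} with a single explicit competitor, uniformly in $\mu$, for $l$ large---is sound in principle, and your large--mass and compact--mass regimes are essentially fine (they reproduce what the paper does with truncated solitons). But the small--mass analysis, which you correctly identify as the heart of the matter, contains a genuine error: the inequality
\begin{equation*}
\f12e^{-2t}-\f4qe^{-qt}<\f{4-q}{2q}\qquad\text{for all }t=\kappa l\ge0
\end{equation*}
is \emph{false} for $q$ close to $4$. The left--hand side has its maximum at $t_*=\tfrac{2\ln 2}{q-2}$, where it equals $2^{-4/(q-2)}\,\tfrac{q-2}{2q}$; the required inequality $2^{-4/(q-2)}(q-2)<4-q$ fails for $q\gtrsim 3.7$ (e.g. $q=3.8$ gives $\approx0.39$ on the left versus $0.2$ on the right), and such $q$ are compatible with $q<\f p2+1$ by taking $p$ close to $6$. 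Since for large $l$ the mass $\mu$ at which $\kappa(\mu)l=t_*$ lies deep in the regime where your delta--dominated asymptotics are accurate, your competitor genuinely violates the test $F_{p,q}(u,\G_l)\le\Eps(\mu,\R)$ at that mass, so the argument does not close. Two further issues compound this: the fine asymptotics of $\eta_\mu$ as $\mu\to0$ (the Nehari--type identity $|\eta_\mu(0)|^q\simeq2\|\eta_\mu'\|_2^2$, the exponential profile with rate $\kappa$, the expansion of the gap) are nowhere established in the paper and would require a separate, nontrivial proof; and when $\kappa l$ is small each reflected tail carries mass comparable to $\mu/2$, so the renormalization is not a lower--order correction but an $O(1)$ rescaling of the function, which invalidates the perturbative bookkeeping.

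For comparison, the paper sidesteps all of this by never transplanting $\eta_\mu$ onto the graph. It splits the masses into three ranges with three elementary competitors: for $\mu\ge\overline{\mu}$ a truncated soliton supported in $e_1\cup e_2$ and centred at $\vv$ (Lemma \ref{exRcompactsupp}, valid for all $l\ge1$); for $\mu\le\underline{\mu}$ a function equal to a soliton $\phi_\nu$ on each of the four half--lines and constant on the compact core, where the term $-\f{2|\phi_1(0)|^q}{q}\nu^{\alpha q}$ dominates $\mu^{2\beta+1}$ precisely because $q<\f p2+1$ (again valid for all $l\ge1$, via a rearrangement to pass from $l=1$ to general $l$); and for the compact range $[\underline{\mu},\overline{\mu}]$ rescaled truncated solitons, which is the only place the threshold $\overline{l}$ enters. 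If you want to salvage your approach, the cleanest fix is to replace your competitor in the small--mass regime by the paper's four--half--line construction, whose energy expansion requires only the explicit formulas \eqref{eq:phimu} and \eqref{Ephimu}.
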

\begin{proof}
	The proof is divided in three steps.
	
	{\em Step 1.} We first show that there exists $\overline{\mu}>0$ independent of $l$ so that, for every $l\geq1$, ground states of $F_{p,q}$ at mass $\mu$ on $\G_l$ exist for every $\mu\geq\overline{\mu}$. By Lemma \ref{exRcompactsupp}, there is $\overline{\mu}>0$ such that for every $\mu\ge\overline{\mu}$ there exists a function $f_{\mu}\in H^{1}_{\mu}(\R)$ compactly supported on $[-1,1]$ and satisfying $F_{p,q}(f_{\mu}, \R)<E(\phi_{\mu},\R)$. Since, for every $l\geq1$, one can think of $f_\mu$ as a function in $H_\mu^1(\G_l)$ supported on $e_1\cup e_2$ and centered at $\vv$, by Corollary \ref{compactcor} it follows that a ground state of $F_{p,q}$ at mass $\mu$ on $\G_l$ exists for every $\mu\geq\overline{\mu}$ and $l\geq1$.  
	
	\textit{Step 2.} We now prove that there exists $\underline{\mu}>0$ independent of $l$ so that, for every $l\geq1$, ground states of $F_{p,q}$ at mass $\mu$ on $\G_l$ exist for every $\mu\leq\underline{\mu}$. To this end, let $\widetilde{\G}_{l}$ denote the graph obtained by removing the vertex $\vv$ from $\G_{l}$ and replacing the edges $e_1$, $e_2$ with a single edge $e$ of length $2l$. Hence, $\widetilde{\G}_l$ has two vertices, $\vv_1$ and $\vv_2$. Clearly, $H^{1}(\G_{l})=H^{1}(\widetilde{\G}_{l})$ and for every $u\in H^{1}(\G_{l})$
	\begin{equation}
	\label{FGl<FGtildel}
	F_{p,q}(u,\G_{l})\le F_{p,q}(u,\widetilde{\G}_{l})\,.
	\end{equation}
	Fix now $l=1$ and $\mu>0$, and set $u_{1}\in H^{1}_{\mu}(\widetilde{\G}_{1})$ to be
	\begin{equation}
	\label{u1}
	u_1(x):=\begin{cases}
	\phi_\nu(x) & \text{ if }x\in\HH_i,\text{ for some }i=1,\dots,\,4\\
	\|\phi_\nu\|_{L^\infty(\R)} & \text{ if }x\in e\,,
	\end{cases}
	\end{equation}
	where $\phi_\nu$ denotes the soliton of mass $\nu$ on $\R$ as in \eqref{eq:phimu}. Since $\|u_1\|_{L^2(\widetilde{\G}_1)}^2=\mu$, then $\mu=2\nu+2|\phi_1(0)|^2\nu^{2\alpha}$. In particular, if $\mu\to0$ then $\nu^{2\alpha}=o(\nu)$ and $2\nu=\mu+o(\mu)$. Moreover, since $\alpha q<2\beta+1<\alpha p$ (recall Remark \ref{rem:diagpq}),
	\begin{equation*}
	\begin{split}
	F_{p,q}(u_{1},\widetilde{\G}_{1})-E(\phi_{\mu},\R)&=2E(\phi_{\nu},\R)-\f{2|\phi_1(0)|^p}{p}\nu^{\alpha p}-\f{2|\phi_1(0)|^q}{q}\nu^{\alpha q}-E(\phi_{\mu},\R)\\
	&=-\theta_{p}\left(2^{2\beta}-1\right)\mu^{2\beta+1}+o\left(\mu^{2\beta+1}\right)-\f{2|\phi_1(0)|^q}{q}\left(\f \mu2\right)^{\alpha q}+o\left(\mu^{\alpha q}\right)\\
	&<-\f{|\phi_1(0)|^q}{q}\left(\f\mu2\right)^{\alpha q}+o\left(\mu^{\alpha q}\right)<0\quad\text{as}\quad \mu\to0,
	\end{split}
	\end{equation*}
	which, by Corollary \ref{compactcor} and \eqref{FGl<FGtildel}, entails the existence of $\underline{\mu}>0$ such that ground states of $F_{p,q}$ at mass $\mu$ on $\G_1$ exist for every $\mu\leq\underline{\mu}$.
	
	\begin{figure}[t]
		\centering
		\subfloat{\includegraphics[width=0.41\textwidth]{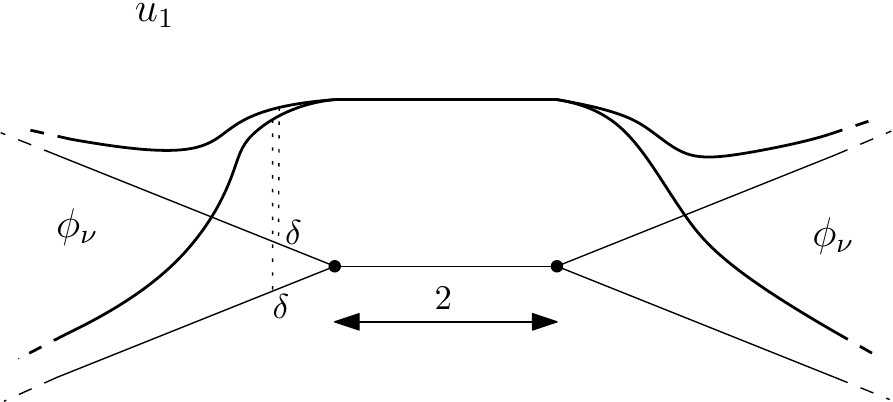}}
		\quad\quad\,\,
		\subfloat{\includegraphics[width=0.48\textwidth]{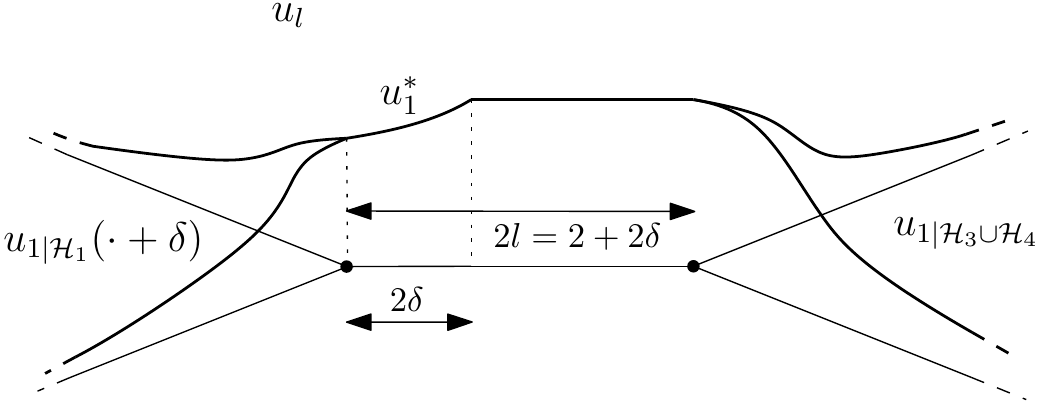}}
		\caption{The functions $u_1$ and $u_l$ as in the proof of Proposition \ref{prop:metrex}.}
		\label{fig:ul}
	\end{figure}
	
	Let now $l>1$. Denote by $\delta:=l-1$ and $J:=\left(\HH_1\cap(0,\delta)\right)\cup\left(\HH_2\cap(0,\delta)\right)$ be the union of the first portion of length $\delta$ of the half--lines $\HH_1$ and $\HH_2$. Let then $u_1$ be the function on $\widetilde{\G}_1$ defined in \eqref{u1} above. Since $u_1$ is symmetrically decreasing on $\HH_{1}\cup\HH_{2}$, the decreasing rearrangement $u_1^*\in H^1(0,2\delta)$ of the restriction of $u_1$ to $J$ satisfies
	\[
	\begin{split}
	\|u_{1}'\|_{L^2(J)}\geq&\,\|(u_{1}^{*})'\|_{L^2\left(0,2\delta\right)}\,,\quad\|u_{1}\|_{L^r(J)}=\|u_{1}^{*}\|_{L^r\left(0,2\delta\right)}\quad\forall r\geq1\\
	u_1^*(0)=&\,\|\phi_\nu\|_{L^\infty(\R)}\,,\qquad\qquad u_1^*(2\delta)=u_{1|\HH_{i}}(\delta),\quad i=1,2\,.
	\end{split}
	\]
	We then parameterize the edge $e$ with $[0,2l]$ so that $\vv_2$ corresponds to $0$ and $\vv_1$ to $2l$ and define $u_{l}\in H^{1}_{\mu}(\widetilde{\G}_{l})$ as (see Figure \ref{fig:ul})
	\begin{equation*}
	u_{l}(x):=
	\begin{cases}
	u_{1}(x), & \text{ if }x\in \HH_{3}\cup\HH_{4}\\
	u_{1}(x), & \text{ if }x\in (0,2]\cap e\\
	u_{1}^{*}(x-2), &\text{ if } x\in (2,2l)\cap {e}\\
	u_{1|\HH_{i}}\left(x+\delta\right), &\text{ if } x\in \HH_{i},\, i=1,2\,.
	\end{cases}
	\end{equation*} 
	Observe that by construction $u_l(\vv_2)=u_1(\vv_2)$ and
	\begin{equation*}
	F_{p,q}(u_{l}, \widetilde{\G}_{l})-E(\phi_{\mu},\R)< E(u_1,\widetilde{\G}_1)-\f{|u_1(\vv_2)|^q}q- E(\phi_\mu,\R)\leq -\f{|\phi_1(0)|^q}{q}\left(\f\mu2\right)^{\alpha q}+o\left(\mu^{\alpha q}\right)<0
	\end{equation*}
	for every $\mu\leq\underline{\mu}$. 
	By \eqref{FGl<FGtildel} and Corollary \ref{compactcor} it then follows that ground states of $F_{p,q}$ at mass $\mu$ on $\G_l$ exist, for every $l\ge 1$ and every $\mu\le \underline{\mu}$.
	
	{\em Step 3. } We now show that there exists $\overline{l}\geq1$ so that, for every $l\geq\overline{l}$, ground states of $F_{p,q}$ on $\G_l$ exist for every $\underline{\mu}\leq\mu\leq\overline{\mu}$, where $\overline{\mu}$, $\underline{\mu}$ are as in Step 1 and 2 above. In view of the previous discussion, this will conclude the proof of Proposition \ref{prop:metrex}.
	
	Let $\phi_{\underline{\mu}}$ be the soliton at mass $\underline{\mu}$ on the real line. For every $l$, let $\delta=\delta(l)$ and $\kappa=\kappa(l)$ be such that $w_{l}(x):=\kappa(\phi_{\underline{\mu}}(x)-\delta)_{+}$ is compactly supported on $(-l,l)$ and $\|w_l\|_{L^2(-l,l)}^{2}=\underline{\mu}$. In particular, observe that $\delta\to 0$, $\kappa\to 1$ and $w_{l}-\phi_{\underline{\mu}}\to 0$ strongly in $H^1(\R)$ as $l \to +\infty$. Hence, thinking of $w_l$ as a function in $H_\mu^1(\G_l)$ supported on $e_1\cup e_2$, we have
	\begin{equation}
	\label{Fpqwlgl}
	F_{p,q}(w_{l},\G_{l})-E(\phi_{\underline{\mu}},\R)=E(w_{l},\R)-E(\phi_{\underline{\mu}},\R)-\f{1}{q}|w_{l}(0)|^{q}=-\f{|\phi_{\underline{\mu}}(0)|^q}q+o(1)\\
	\end{equation}
	as $l\to+\infty$, implying by Corollary \ref{compactcor} the existence of $\overline{l}>0$ so that ground states of $F_{p,q}$ on $\G_l$ at mass $\underline{\mu}$ exist for every $l\geq\overline{l}$. 
	
	Let now $\underline{\mu}<\mu\leq\overline{\mu}$ be fixed. Since, for every $l\geq\overline{l}$, the function $w_l\in H_{\underline{\mu}}^1(\G_l)$ above is supported on $e_1\cup e_2$ only, setting $w_{l,\mu}$ to be
	\begin{equation*}
	w_{l,\mu}(x):=\left(\f{\mu}{\underline{\mu}}\right)^{\alpha}\widetilde{w}_{l}\left(\left(\f{\mu}{\underline{\mu}}\right)^{\beta}x\right),
	\end{equation*}
	we can think of $w_{l,\mu}$ as a function in $H_\mu^1(\G_l)$ supported on $e_1\cup e_2$ (and identically equal to zero both on a suitable portion of $e_1$ close to $\vv_1$ and on the corresponding final portion of $e_2$ close to $\vv_2$). By \eqref{Fpqwlgl}, Remark \ref{rem:omot} and the convergence of $w_l$ to $\phi_{\underline{\mu}}$ as $l\to+\infty$, we have that
	\begin{equation*}
	\begin{split}
	F_{p,q}(w_{l,\mu},\G_{l})-E(\phi_{\mu},\R)&=\left(\f{\mu}{\underline{\mu}}\right)^{2\beta+1}\left(E(w_{l},\R)-E(\phi_{\underline{\mu}},\R)\right)-\f{1}{q}\left(\f{\mu}{\underline{\mu}}\right)^{\alpha q}|w_l(0)|^q\\
	&=-\f{1}{q}\left(\f{\mu}{\underline{\mu}}\right)^{\alpha q}|\phi_{\underline{\mu}}(0)|^{q}+ o(1)<0\\
	\end{split}
	\end{equation*}
	for every $\mu\in(\underline{\mu},\overline{\mu}]$, provided $l$ is sufficiently large. Hence, up to possibly enlarging the value of $\overline{l}$, by Corollary \ref{compactcor} we conclude that ground states of $F_{p,q}$ at mass $\mu$ on $\G_l$ exist for every $\underline{\mu}\leq\mu\leq\overline{\mu}$ and for every $l\geq\overline{l}$. This gives the claim of Step 3 and completes the proof.
\end{proof}

\begin{proposition}
	\label{prop:metrnonex1}
	Let $\G_l$ be the graph in Figure \ref{fig:metr1}, with $e_1,e_2$ each of length $l$, and $q<\f p2+1$. Then there exists $\underline{l}>0$ (depending on $p,q$) and a value $m>0$ so that for every $l\leq\underline{l}$ ground states of $F_{p,q}$ at mass $m$ on $\G_l$ do not exist.
\end{proposition}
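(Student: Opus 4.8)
The plan is to argue by contradiction, transferring a putative ground state on $\G_l$ to an admissible competitor on a star graph where non--existence is already known, the point being that short edges $e_1,e_2$ make $\G_l$ behave like a star graph for which, at the mass we select, ground states cannot exist. First I fix the mass. Since $q<\f p2+1$, Proposition \ref{exstarg} applied with delta strength $\tau=n:=\#V=3$ produces a critical mass $\mu^*(p,q,3,3)$ above which $F_{p,q,3}$ has no ground state on $S_3$, so that $\F_{p,q,3}(\mu,S_3)=\ee(\mu,\R)$ for every $\mu>\mu^*(p,q,3,3)$. I would then fix any $m>\mu^*(p,q,3,3)$ and suppose, for contradiction, that there is a sequence $l\to0$ along which a positive ground state $u_l$ of $F_{p,q}$ at mass $m$ on $\G_l$ exists. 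The choice $\tau=3$ is forced by the topology: $\G_l$ carries three vertices $\vv_1,\vv,\vv_2$, each contributing a delta term, and all three must be compensated by the single vertex of $S_3$. Note also that this is consistent with the heuristic behind Figure \ref{fig:metr1}: by (the proof of) Theorem \ref{3half-deg2} the degree--$2$ vertex produces existence only for masses above a threshold that diverges as $l\to0$, so at the fixed intermediate mass $m$ this existence mechanism is unavailable once $l$ is small.

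Next I would collect the a priori information. Because $F_{p,q}(u_l,\G_l)=\F_{p,q}(m,\G_l)\le\ee(m,\R)<0$ at the frozen mass $m$, the Gagliardo--Nirenberg inequalities \eqref{GnpG}--\eqref{GninfG} give uniform two--sided bounds on $\|u_l'\|_2$ and $\|u_l\|_\infty$; this replaces Lemma \ref{limitest-norms}, which is immediate here since the mass does not vary. The compact core $\K=e_1\cup e_2$ has length $2l\to0$, so $\|u_l\|_{L^2(\K)}^2$ and $E(u_l,\K)$ are $O(l)$, while $|u_l(\vv_i)-u_l(\vv_j)|\le(2l)^{1/2}\|u_l'\|_{L^2(\K)}=O(l^{1/2})$ for any two vertices, so that the three vertex values coalesce as $l\to0$. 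Writing $\lambda_l:=\min_{\vv\in V^+}u_l(\vv)$ (with $V^+=\{\vv_1,\vv_2\}$) and $\Lambda_l:=\max_{\overline{\K}}u_l$, this yields $\Lambda_l-\lambda_l=O(l^{1/2})$ and $\sum_{\vv\in V}|u_l(\vv)|^q=3\lambda_l^q+O(l^{1/2})$. Exploiting that $\G_l$ satisfies Assumption (H) with $4\ge3$ half--lines and that $u_l$ is symmetrically decreasing on the half--lines at $\vv_1$ and at $\vv_2$ (Lemma \ref{dec-2half}), I would rearrange $u_l$ onto $S_3$ via Lemma \ref{rearr1}/Lemma \ref{rearr-halflines}, obtaining $w_l\in H^1_m(S_3)$ with $E(w_l,S_3)\le E(u_l,\G_l)$ and center value $w_l(0)=\lambda_l$.

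Combining these facts gives
\[
F_{p,q,3}(w_l,S_3)=E(w_l,S_3)-\f3q\lambda_l^q\le E(u_l,\G_l)-\f3q\lambda_l^q,
\]
and, recalling $E(u_l,\G_l)=F_{p,q}(u_l,\G_l)+\f1q\sum_{\vv\in V}|u_l(\vv)|^q$ together with the estimate above,
\[
F_{p,q,3}(w_l,S_3)\le F_{p,q}(u_l,\G_l)+O(l^{1/2})\le\ee(m,\R)+O(l^{1/2}).
\]
On the other hand, since $m>\mu^*(p,q,3,3)$, Proposition \ref{exstarg} forces $F_{p,q,3}(w_l,S_3)>\ee(m,\R)$ strictly, for otherwise $w_l$ would attain $\F_{p,q,3}(m,S_3)=\ee(m,\R)$, i.e. be a ground state of $F_{p,q,3}$ on $S_3$ at a supercritical mass, which does not exist.

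The two bounds are compatible only through the error term, and closing this gap is the main obstacle: every natural error is merely $o(1)$ as $l\to0$, whereas the non--attainment on $S_3$ provides no margin to absorb it. As in Proposition \ref{noex-largemu}, the heart of the argument should be to isolate a \emph{genuinely positive} remainder. I expect this to come from the kinetic energy $\f12\|u_l'\|_{L^2(\K)}^2$ stored across the two short edges, coupled with a careful insertion of a connecting segment in the construction of $w_l$ (as in Step~2 of Proposition \ref{noex-largemu}), so as to produce a strict inequality $E(u_l,\G_l)>\ee(m,\R)+\f1q\sum_{\vv\in V}|u_l(\vv)|^q$ once $l$ is small enough — quantitatively, the three nearly coincident deltas of $\G_l$ cannot outweigh the single vertex of $S_3$ at the supercritical mass $m$. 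This would exhibit a competitor on $S_3$ with energy strictly below $\ee$ at a mass exceeding $\mu^*(p,q,3,3)$, hence, via (the $F_{p,q,3}$ analogue of) Proposition \ref{compactth}, a ground state there, contradicting Proposition \ref{exstarg} and producing the threshold $\underline l$. The delicate point, exactly as in Section \ref{sec:nonex}, is the control of the sign of the remainder through $\alpha q-(2\beta+1)<0$ (Remark \ref{rem:diagpq}).
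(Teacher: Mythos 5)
Your setup is sound up to the point you yourself flag, but the proof is genuinely incomplete: the contradiction is never derived. You correctly obtain $F_{p,q,3}(w_l,S_3)\le \ee(m,\R)+O(l^{1/2})$ together with the non--quantitative bound $F_{p,q,3}(w_l,S_3)>\ee(m,\R)$, and you correctly observe these are compatible; but the route you then sketch to close the gap --- extracting a genuinely positive remainder from the kinetic energy on $\K$, in the spirit of Proposition \ref{noex-largemu} --- is not the right mechanism and would not work here. Since the three vertex values coalesce as $l\to0$, the ground state can be (and in the relevant regime is) nearly constant on the shrinking core, so $\f12\|u_l'\|_{L^2(\K)}^2$ carries no uniform positive reservoir; indeed the paper proves $\liminf_{l\to0}E(u_l,\K_l)\ge0$ and nothing stronger is available. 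There is no margin to manufacture at finite $l$.

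The paper closes the argument by a different device: passage to the limit rather than a quantitative remainder. Assuming ground states $u_l$ exist along $l\to0$, one uses the uniform $H^1$ bound, the radial monotonicity on the half--lines from Lemma \ref{dec-2half}, and the compactness of radially decreasing sequences (\cite[Proposition 1.7.1]{cazenave}) to extract strong $L^r$ limits $u_1$ on $\HH_1\cup\HH_2$ and $u_2$ on $\HH_3\cup\HH_4$; the H\"older estimate on the shrinking core forces $u_1(0)=u_2(0)$, so the limits glue into a single function $u$ on $S_4$ (not $S_3$: the natural limit object of $\G_l$ as $l\to0$ is the $4$--star with the three deltas coalesced into one of strength $\tau=3$, whence the choice $m>\mu^*(p,q,3,4)$). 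One then gets the \emph{exact} inequality $F_{p,q,3}(u,S_4)\le-\theta_p m^{2\beta+1}$ for the limit function, with $\|u\|_{L^2(S_4)}^2\le m$; possible mass loss is excluded by the scaling $u\mapsto\beta u$, $\beta>1$, which would strictly lower the energy and violate \eqref{Fpqw>sol}. Since \eqref{Fpqw>sol} holds for \emph{every} function in $H^1_m(S_4)$ (non--attainment at supercritical mass), not merely asymptotically, the limit function itself furnishes the contradiction --- the $O(l^{1/2})$ errors simply vanish in the limit and never need to be dominated. This is the idea missing from your proposal.
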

\begin{proof}
	Choose $m$ so that $m>\mu^{*}$, where $\mu^{*}=\mu^{*}(p,q,3,4)$ is the critical value associated to $F_{p,q,3}$ on the star graph $S_4$ with $4$ half--lines as in Proposition \ref{exstarg}. Hence, by Proposition \ref{exstarg} and Corollary \ref{compactcor}, ground states of $F_{p,q,3}$ at mass $m$ on $S_4$ do not exist and 
	\begin{equation}
	\label{Fpqw>sol}
	F_{p,q,3}(w, S_{4})>-\theta_{p} m^{2\beta+1}\,,\qquad \forall\, w\in H^{1}_{m}(S_{4})\,.
	\end{equation} 
	Let us now show that ground states of $F_{p,q}$ at mass $m$ on $\G_l$ do not exist, provided $l$ is small enough.
	
	Assume by contradiction that this is not the case and that there exists a ground state $u_{l}$ of $F_{p,q}$ at mass $m$ on $\G_l$ for every $l>0$. By Corollary \ref{compactcor}, this entails that for every $l>0$
	\begin{equation}
	\label{Fpqul-bdd}
	F_{p,q}(u_{l}, \G_{l})\le E(\phi_{m}, \R)=-\theta_{p}m^{2\beta+1}\,,
	\end{equation}
	which coupled with \eqref{GnpG}--\eqref{GninfG} and the fact that $p\in(2,6)$, $q\in(2,4)$, ensures that $\|u_{l}\|_{H^{1}(\G_{l})}$ is bounded from above uniformly on $l$. 
	
	Let then $\K_l:=e_1\cup e_2$. On the one hand, by \eqref{GninfG} we get
	\begin{equation}
	\label{Eul-K}
	\liminf_{l\to 0} E(u_{l}, \K_{l})\ge -\liminf_{l\to 0}\f{2l\|u_{l}\|_{L^{\infty}(\K_{l})}^{p}}{p}\ge -\liminf_{l\to 0}\f{2l m^{\f{p}{4}}\|u'_{l}\|_{L^{2}(\G_{l})}^{\f{p}{2}}}{p}=0\,.
	\end{equation}
	On the other hand, for every $l$, since $u_{l}$ is a ground state of $F_{p,q}$, then it is non--increasing along every half--line of $\G_{l}$ by Lemma \ref{dec-2half}. Hence, by \cite[Proposition 1.7.1]{cazenave} there exist $u_1,\,u_2\in H^{1}(\R)$ such that, up to subsequences, $u_{l|\HH_{1}\cup\HH_{2}}\to u_1$ and $u_{l|\HH_{3}\cup\HH_{4}}\to u_2$ strongly in $L^{r}(\R)$ for every $2<r\le \infty$. Moreover, for every $x,y\in\K_l$, by H\"older inequality
	\begin{equation}
	\label{eq:holder}
	|u_{l}(x)-u_{l}(y)|\le \sqrt{2l}\|u'_{l}\|_{L^{2}(\G_{l})}\to 0\quad\text{as}\quad l\to 0\,.
	\end{equation} 
	In particular, this implies that $u_1(0)=\lim_{l\to0}u_{l|\HH_{1}\cup\HH_{2}}(\vv_1)=\lim_{l\to0}u_{l|\HH_{3}\cup\HH_{4}}(\vv_2)=u_2(0)$. Therefore, letting as usual $h_1,h_2,h_3,h_4$ be the half--lines of the star graph $S_4$, we define $u\in H^1(S_4)$ as
	\[
	u(x):=\begin{cases}
	u_1(x) & \text{ if }x\in h_1\cup h_2\\
	u_2(x) & \text{ if }x\in h_3\cup h_4\,.
	\end{cases}
	\]
	By construction, it then follows that $\|u\|_{L^2(S_4)}\leq m$ and by \eqref{Eul-K}
	\begin{equation}
	\label{Eul-G-K}
	\liminf_{l\to 0}E(u_{l}, \G_{l})\ge E(u, S_{4}).
	\end{equation}
	Furthermore, \eqref{eq:holder} implies that $u_{l}(\ww)\to u(0)$ for every vertex $\ww\in \G_{l}$. Since $\G_l$ has $3$ vertices, by \eqref{Fpqul-bdd} and \eqref{Eul-G-K} we get
	\begin{equation}
	\label{EuS4}
	\begin{split}
	F_{p,q,3}(u, S_{4})=E(u,S_{4})-\f{3}{q}|u(0)|^{q}\le&\, \liminf_{l\to 0}\left(E(u_l,\G_l)-\f{|u_l(\vv_1)|^q+|u_l(\vv)|^q+|u_l(\vv_2)|^q}{q}\right)\\
	=&\,\liminf_{l\to 0}F_{p,q}(u_{l},\G_{l})\le -\theta_{p}m^{2\beta+1}.
	\end{split}
	\end{equation}
	Note that \eqref{EuS4} immediately implies $u\not\equiv0$ on $S_4$. Also, if it were $\|u\|_{L^{2}(S_{4})}^{2}<m$, then there would exist $\beta>1$ so that $\beta u\in H^{1}_{m}(S_{4})$ and
	\begin{equation*}
	-\theta_{p}m^{2\beta+1}< F_{p,q,3}(\beta u, S_{4})<\beta^{2}F_{p,q,3}(u, S_{4})<F_{p,q,3}(u, S_{4})\le -\theta_{p}m^{2\beta+1}
	\end{equation*}
	by \eqref{Fpqw>sol} and \eqref{EuS4} (recalling also that $p\in(2,6)$, $q\in (2,4)$), which is impossible. Hence, it must be $\|u\|_{L^{2}(S_{4})}^{2}=m$. But  this means that $u\in H^{1}_{m}(S_{4})$ satisfies \eqref{EuS4}, contradicting \eqref{Fpqw>sol}. Therefore, there exists $\underline{l}>0$ such that no ground states of $F_{p,q}$ at mass $m$ exist on $\G_l$, for every $l\leq\underline{l}$, and we conclude.
\end{proof}
\begin{proof}[Proof of Theorem \ref{thm:metric1}]
	To exhibit a graph $\G^1$ as in part (i), by Proposition \ref{prop:metrex} it is enough to take $\G_l$ for sufficiently large value of $l$. Conversely, to prove statement (ii) one can simply take $\G_l$ and $m$ as in Proposition \ref{prop:metrnonex1}.
\end{proof}

\subsection{Graphs with exactly two half--lines: proof of Theorems \ref{thm:metric2}--\ref{thm:metric3}}
We begin with the proof of Theorem \ref{thm:metric2}. To this end, given $k\in \N$, we consider the graph $\G_k$ as in Figure \ref{fig:metr2}. The three vertices of $\G_k$ are denoted by $\vv_1$, $\vv_2$, $\vv_3$. The vertices $\vv_1$ and $\vv_2$ share $3$ edges of length $1$. The vertices $\vv_2$ and $\vv_3$ share $k$ edges of length 1. The compact core of $\G_k$ will be denoted by $\K_k$ The two half--lines $\HH_1$, $\HH_2$ of $\G_k$ emanate from the vertex $\vv_3$. Clearly, $\G_k$ fulfills the hypotheses of Theorem \ref{thm:metric2} for every $k\geq2$.

\begin{figure}[t]
	\centering
	\includegraphics[height=0.3\textheight]{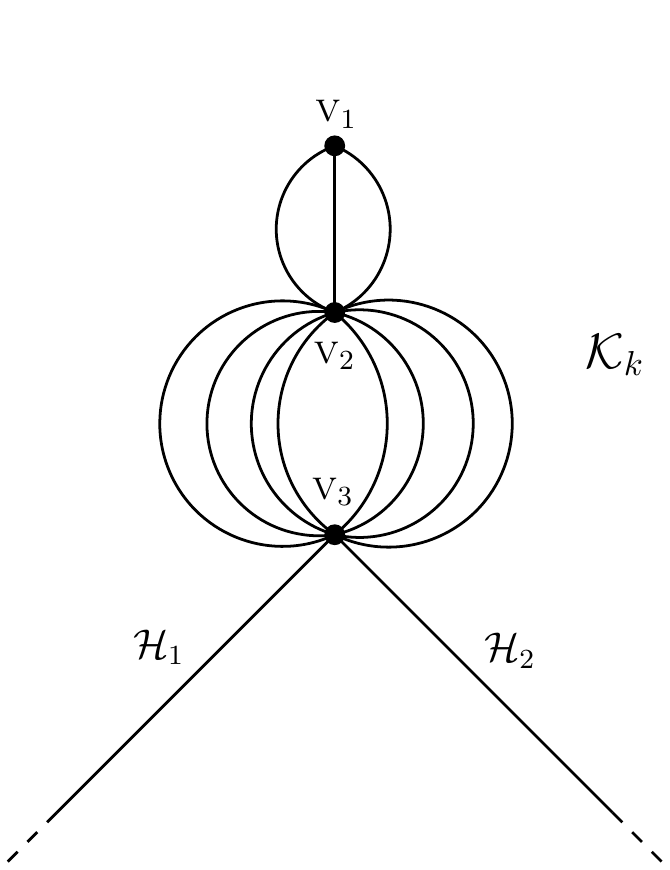}
	\caption{The graph $\G_k$ as in the proof of Theorem \ref{thm:metric2}. For every $k\in\N$, the compact core $\K_k$ of $\G_k$ is given by $3$ edges of length $1$, attached to the vertices $\vv_1$ and $\vv_2$, and $k$ edges of length $1$, attached to the vertices $\vv_2$ and $\vv_3$. The two half--lines of $\G_k$ emanate from the same vertex $\vv_3$.}
	\label{fig:metr2}
\end{figure}

\begin{proof}[Proof of Theorem \ref{thm:metric2}]
	We claim that there exists $m>0$ so that ground states of $F_{p,q}$ at mass $m$ on $\G_k$ do not exist, provided $k$ is sufficiently large. If this is true, then to exhibit a graph $\G$ and a mass $m$ as in the statement of Theorem \ref{thm:metric2} it is enough to take $\G=\G_k$ for a suitably large value of $k$.
	
	Let us thus prove the claim, dividing the argument in the following steps.
	
	{\em Step 1. }Let $m>0$ be fixed. The actual value of $m$ will be properly chosen later on in the argument. Assume by contradiction that a ground state $u_k\in H_m^1(\G_k)$ at mass $m$ exists on $\G_k$ for every $k\in\N$, so that by Corollary \ref{compactcor}
	\begin{equation}
		\label{unGS}
		F_{p,q}(u_k,\G_k)\leq-\theta_p m^{2\beta+1}\qquad\forall k\in\N\,.
	\end{equation}
	Coupling \eqref{unGS} with \eqref{GnpG}, \eqref{GninfG}, $p\in(2,6)$, $q\in(2,4)$ and $\|u_k\|_{L^2(\G_k)}^2=m$, it then follows that $\|u_k'\|_{L^2(\G_k)}$ is bounded from above uniformly on $k$. In particular, denoting by $\widetilde{\K}_k\subset\K_k$ the union of the $k$ edges of length 1 between $\vv_2$ and $\vv_3$, we have that there exists a suitable constant $C>0$ so that $\|u_k\|_{L^2(\widetilde{\K}_k)}\leq C$ and $\|u_k'\|_{L^2(\widetilde{\K}_k)}\leq C$ independently of $k$. 
	
	For every $k$, consider now an ordering $e_1,\,e_2,\,\dots,\,e_k$ of the edges of $\widetilde{\K}_k$ so that if $i<j$, that is $e_i$ precedes $e_j$ in the ordering, then $\|u_k\|_{L^2(e_i)}\geq\|u_k\|_{L^2(e_j)}$. Then $\|u_k\|_{L^2(e_k)}\to0$ and $\min_{x\in e_{k}} u_k(x)\to0$ as $k\to+\infty$. Hence, by H\"older inequality, for every $x\in e_{k}$
	\[
	u_k^2(x)\leq\left(\min_{y\in e_{k}}u_k(y)\right)^2+2\|u_k\|_{L^2(e_{k})}\|u_k'\|_{L^2(e_{k})}\to0\qquad\text{ as }k\to+\infty\,.
	\]
	In particular,
	\begin{equation}
		\label{v2v3to0}
		u_k(\vv_2)\to0\quad\text{ and }\quad u_k(\vv_3)\to0\quad\text{ as }k\to+\infty\,.
	\end{equation}
	
	{\em Step 2. } Since $u_k$ is a ground state on $\G_k$, by Lemma \ref{dec-2half} it is non--increasing on both $\HH_{1}$ and $\HH_{2}$. Hence, \eqref{v2v3to0} and $\|u_k\|_{L^2(\G_k)}^2=m$ yields $\|u_k\|_{L^p(\HH_{1}\cup\HH_{2})}\to0$ as $k\to+\infty$, so that
	\begin{equation}
		\label{En_H12}
		\liminf_{k\to+\infty}E(u_k,\HH_{1}\cup\HH_{2})\geq0\,.
	\end{equation}
	Furthermore, since $\|u_k\|_{H^1(\G_k)}$ is bounded uniformly on $k$ and, for every $k$, $\K_k\setminus\widetilde{\K}_k$ is given by $3$ edges of length 1, there exists $w_1\in H^1(\K_k\setminus\widetilde{\K}_k)$ so that $u_{k|\K_k\setminus\widetilde{\K}_k}\rightharpoonup w_1$ in $H^1(\K_k\setminus\widetilde{\K}_k)$ and  $u_{k|\K_k\setminus\widetilde{\K}_k}\to w_1$ strongly in $L^r$ for every $r\geq2$ as $k\to+\infty$. By \eqref{v2v3to0}, $w_1(\vv_2)=0$. Observe that we can think of $w_1$ as a function on the star graph $S_3$ with 3 half--lines supported on the ball of radius 1 centered at the unique vertex of $S_3$. Therefore we have
	\begin{equation}
	\label{EunS3}
	\liminf_{k\to+\infty}\left(E(u_k,\K_k\setminus\widetilde{\K}_k)-\f{|u_k(\vv_1)|^q}q\right)\geq F_{p,q}(w_1,S_3)\,.
	\end{equation}
	Let now	
	\[
	\begin{split}
	\widetilde{\K}_{k,1}&\,:=\left\{e_j\in\widetilde{\K}_k\,:\,\lim_{k\to+\infty}\|u_k\|_{L^2(e_j)}\neq0\right\}\\
	\widetilde{\K}_{k,2}&\,:=\left\{e_j\in\widetilde{\K}_k\,:\,\lim_{k\to+\infty}\|u_k\|_{L^2(e_j)}=0\right\}\,,
	\end{split}
	\]
	so that $\widetilde{\K}_k=\widetilde{\K}_{k,1}\cup\widetilde{\K}_{k,2}$ for every $k$. Note that either $\widetilde{\K}_{k,1}=\widetilde{\K}_k$ and $\widetilde{\K}_{k,2}=\emptyset$ for every $k$, or $\widetilde{\K}_{k,1}$ contains a number of edges which is bounded from above uniformly on $k$.
	
	Arguing as in the final part of Step 1, we obtain $\|u_k\|_{L^\infty(\widetilde{\K}_{k,2})}\to0$ as $k\to+\infty$, which implies
	\[
	\|u_k\|_{L^p(\widetilde{\K}_{k,2})}^p\leq\|u_k\|_{L^2(\widetilde{\K}_{k,2})}^2\|u_k\|_{L^\infty(\widetilde{\K}_{k,2})}^{p-2}\leq m \|u_k\|_{L^\infty(\widetilde{\K}_{k,2})}^{p-2}\to0 \qquad\text{ as }n\to+\infty\,,
	\]
	thus yielding
	\begin{equation}
		\label{EunKn2}
		\liminf_{k\to+\infty}E(u_k,\widetilde{\K}_{k,2})\geq0\,.
	\end{equation}
	On the contrary, note that for every $e_j\in\widetilde{\K}_{k,1}$, there exists $w_{e_j}\in H^1(0,1)$, $w_{e_j}\not\equiv0$ on $(0,1)$, so that $\lim_{x\to0^+}w_{e_j}(x)=\lim_{x\to1^-}w_{e_j}(x)=0$, $u_{k|e_j}\rightharpoonup w_{e_j}$ in $H^1(0,1)$ and $u_{k|e_j}\to w_{e_j}$ strongly in $L^r(0,1)$ for every $r\geq2$ as $k\to+\infty$. Let then $l:=\lim_{n\to+\infty}|\widetilde{\K}_{n,1}|$, where as usual $|\widetilde{\K}_{k,1}|$ denotes the length of $\widetilde{\K}_{k,1}$ (which is also the number of edges in $\widetilde{\K}_{k,1}$ since each edge is of length 1), and note that either $l\in\N$ or $l=+\infty$. Writing then $\widetilde{\K}_{k,1}=\bigcup_{j=1}^l e_j$, we consider $w_2\in H^1(\R)$ given by
	\[
	w_2(x):=\begin{cases}
	w_{e_j}(x) & \text{if }x\in[j-1,j],\text{ for some }j\in\N,\,1\leq j\leq l\\
	0 & \text{otherwise}\,.
	\end{cases}
	\]
	By construction, 
	\[
	\liminf_{k\to+\infty}\|u_k'\|_{L^2(\widetilde{\K}_{k,1})}\geq\|w_2'\|_{L^2(\R)}\quad\text{ and }\quad\lim_{k\to+\infty}\|u_k\|_{L^r(\widetilde{\K}_{k,1})}=\|w_2\|_{L^r(\R)}\quad\forall r\geq2\,,
	\]
	so that
	\begin{equation}
		\label{EunKn1}
		\liminf_{k\to+\infty}E(u_n,\widetilde{\K}_{k,1})\geq E(w_2,\R)\,.
	\end{equation}
	
	{\em Step 3. }Let $w_1$, $w_2$ be the functions defined in Step 2. Clearly $\|w_1\|_{L^2(S_3)}^2+\|w_2\|_{L^2(\R)}^2\leq m$. Let us now choose $m<\mu^*$, where $\mu^*=\mu^*(p,q,1,3)$ is the critical value associated to $F_{p,q,1}$ (which is indeed $F_{p,q})$ on $S_3$ by Proposition \ref{exstarg}. Hence, since $q>\f p2+1$, ground states of $F_{p,q}$ at mass $\mu$ do not exist on $S_3$ for every $\mu\leq m$. In particular, ground states of $F_{p,q}$ do not exist on $S_3$ at mass $\|w_1\|_{L^2(S_3)}^2$, so that $F_{p,q}(w_1,S_3)>\ee\left(\|w_1\|_{L^2(S_3)}^2,\R\right)$. Coupling with \eqref{EunS3} leads to
	\begin{equation}
	\label{ass1}
		\liminf_{k\to+\infty}\left(E(u_k,\K_k\setminus\widetilde{\K}_k)-\f{|u_k(\vv_1)|^q}q\right)>\ee\left(\|w_1\|_{L^2(S_3)}^2,\R\right)=-\theta_p\left(\|w_1\|_{L^2(S_3)}^2\right)^{2\beta+1}\,.
	\end{equation}
	Moreover, since $w_2\in H_{\|w_2\|_{L^2(\R)}^2}^1(\R)$, then $E(w_2,\R)\geq\ee(\|w_2\|_{L^2(\R)}^2,\R)$, and by \eqref{EunKn2} and \eqref{EunKn1} we get
	\begin{equation}
		\label{ass2}
		\liminf_{k\to+\infty}E(u_n,\widetilde{\K}_{k})\geq\ee(\|w_2\|_{L^2(\R)}^2,\R)=-\theta_p\left(\|w_2\|_{L^2(\R)}^2\right)^{2\beta+1}\,.
	\end{equation}
	Hence, combining \eqref{ass1}--\eqref{ass2} with \eqref{En_H12} and $\eqref{v2v3to0}$ gives
	\[
	\begin{split}
	\liminf_{k\to+\infty}F_{p,q}(u_k,\G_k)=&\,\liminf_{k\to+\infty}\left(E(u_k,\K_k\setminus\widetilde{\K}_k)-\f{|u_k(\vv_1)|^q}q\right)\\
	&\,+\liminf_{k\to+\infty}E(u_k,\widetilde{\K}_{k,1})+\liminf_{k\to+\infty}E(u_k,\widetilde{\K}_{k,2})\\
	>&\,-\theta_p\left(\|w_1\|_{L^2(S_3)}^2\right)^{2\beta+1}-\theta_p\left(\|w_2\|_{L^2(\R)}^2\right)^{2\beta+1}>-\theta_{p} m^{2\beta+1}\,.
	\end{split}
	\]
	Since this contradicts \eqref{unGS}, we conclude.
\end{proof}
At the end of this section, we prove Theorem \ref{thm:metric3}. To this end, we will consider a graph $\G_l$ as in Figure \ref{fig:metr3}. For every $l>0$, such a graph is obtained by choosing $k\in\N$ large enough so that Theorem \ref{thm:metric2} applies to the graph $\G_k$ as in Figure \ref{fig:metr2} and then adding a vertex with degree $2$, attached to two bounded edges each of length $l$ between the original compact core of $\G_k$ and the two half--lines of the graph.

\begin{figure}[t]
	\centering
	\includegraphics[height=0.25\textheight]{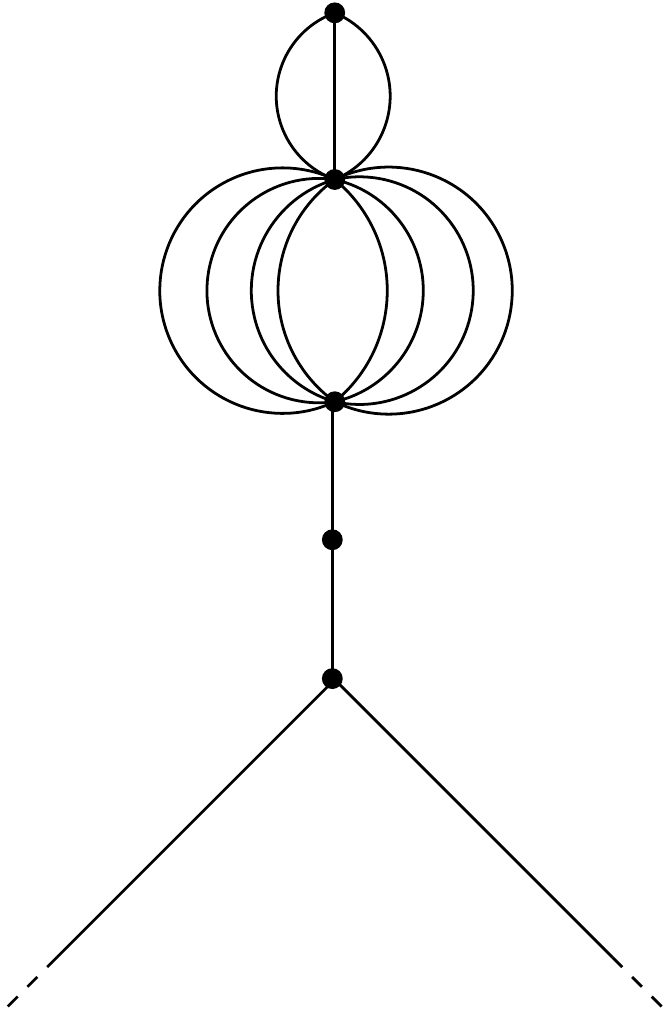}
	\caption{The graph $\G_l$ in the proof of Theorem \ref{thm:metric3}. It is obtained by adding a vertex with degree $2$, attached to two edges each of length $l$, to the graph in Figure \ref{fig:metr2} for which Theorem \ref{thm:metric2} holds.}
	\label{fig:metr3}
\end{figure}

\begin{proof}[Proof of Theorem \ref{thm:metric3}]
	To prove the theorem we can simply show that there exist $m>0$ and $\underline{l}>0$ so that ground states of $F_{p,q}$ at mass $m$ do not exist on $\G_l$, for every $l\leq\underline{l}.$ To this end, it is enough to take $m$ as in Theorem \ref{thm:metric2} and then adapting the argument in the proof of Proposition \ref{prop:metrnonex1}, since as $l\to0$ the limiting graph $\G_0$ admits no ground states at mass $m$ by Theorem \ref{thm:metric2}.
\end{proof}

\appendix
\section{A useful identity}
\label{sec:appA}
The following identity concerning the actual value of the energy of the soliton $\phi_1$ at mass $\mu=1$ for the energy $E$ with the sole standard nonlinearity on the real line is crucial in the proof of Theorem \ref{2half}. Perhaps the result is well--known, but we do not have any explicit reference. Since its proof is elementary, we report it here for the sake of completeness.
\begin{lemma} 
	\label{thetap-2beta}
	For every $p\in(2,6)$, it holds
	\begin{equation*}
	\theta_{p}(2\beta+1)=\f{|\phi_{1}(0)|^{p-2}}{p}.
	\end{equation*}
\end{lemma}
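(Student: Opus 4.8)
The plan is to bring in the Lagrange multiplier $\lambda$ of the constrained minimizer $\phi_1$ and to establish separately the two identities $\theta_p(2\beta+1)=-\f{\lambda}{2}$ and $-\f{\lambda}{2}=\f{|\phi_1(0)|^{p-2}}{p}$, whose combination is exactly the claim. Since $\phi_1$ minimizes $E$ on $H_1^1(\R)$, it solves the Euler--Lagrange equation $-\phi_1''-\phi_1^{p-1}=\lambda\phi_1$ on $\R$ for some $\lambda\in\R$. I would also recall the standard facts about the soliton: $\phi_1>0$ is even, attains its maximum $\|\phi_1\|_\infty=\phi_1(0)$ at the origin, and decays together with $\phi_1'$ to $0$ at infinity.

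For the first identity I would differentiate the map $\mu\mapsto E(\phi_\mu)$. By \eqref{Ephimu} this map equals $-\theta_p\mu^{2\beta+1}$, so its derivative at $\mu=1$ is $-\theta_p(2\beta+1)$. On the other hand, the explicit scaling \eqref{eq:phimu} makes $\mu\mapsto\phi_\mu$ a $C^1$ curve in $H^1(\R)$, and since $E$ is $C^1$ the chain rule together with the Euler--Lagrange equation $E'(\phi_\mu)=\lambda(\mu)\phi_\mu$ gives
\begin{equation*}
\f{d}{d\mu}E(\phi_\mu)=\big\langle E'(\phi_\mu),\partial_\mu\phi_\mu\big\rangle=\lambda(\mu)\big\langle\phi_\mu,\partial_\mu\phi_\mu\big\rangle=\f{\lambda(\mu)}{2}\f{d}{d\mu}\|\phi_\mu\|_2^2=\f{\lambda(\mu)}{2}\,,
\end{equation*}
where $\lambda(\mu)$ denotes the multiplier of $\phi_\mu$ and we used $\|\phi_\mu\|_2^2=\mu$. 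Evaluating at $\mu=1$ yields $\theta_p(2\beta+1)=-\f{\lambda}{2}$ with $\lambda=\lambda(1)$.

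To compute $\lambda$ in terms of $\phi_1(0)$ I would use the first integral of the Euler--Lagrange ODE. Multiplying $-\phi_1''-\phi_1^{p-1}=\lambda\phi_1$ by $\phi_1'$ and integrating from $x$ to $+\infty$, the decay of $\phi_1,\phi_1'$ gives
\begin{equation*}
\f12(\phi_1'(x))^2+\f1p\phi_1(x)^p+\f{\lambda}{2}\phi_1(x)^2=0\qquad\text{for all }x\in\R\,.
\end{equation*}
Evaluating at the maximum point $x=0$, where $\phi_1'(0)=0$ and $\phi_1(0)=\|\phi_1\|_\infty>0$, and dividing by $\phi_1(0)^2$ leaves $\f1p\phi_1(0)^{p-2}=-\f{\lambda}{2}$. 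Combining with the previous step gives $\theta_p(2\beta+1)=-\f{\lambda}{2}=\f{|\phi_1(0)|^{p-2}}{p}$, as desired.

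The only delicate point is the identity $\f{d}{d\mu}E(\phi_\mu)=\f{\lambda(\mu)}{2}$, i.e. justifying the differentiation along the constraint; this is harmless here because \eqref{eq:phimu} provides $\partial_\mu\phi_\mu$ explicitly as an $H^1(\R)$ function. Should one prefer to avoid any differentiation, the relation $\theta_p(2\beta+1)=-\f{\lambda}{2}$ can be obtained purely algebraically by pairing the Euler--Lagrange equation with $\phi_1$ (Nehari identity) and with $x\phi_1'$ (Pohozaev identity): these give $\|\phi_1'\|_2^2-\|\phi_1\|_p^p=\lambda$ and $\f12\|\phi_1'\|_2^2+\f1p\|\phi_1\|_p^p=-\f{\lambda}{2}$, whence $\lambda=-\f{p+2}{2p}\|\phi_1\|_p^p$ and $E(\phi_1)=\f{p-6}{4p}\|\phi_1\|_p^p$, so that, using $2\beta+1=\f{p+2}{6-p}$, one finds $\theta_p(2\beta+1)=\f{p+2}{4p}\|\phi_1\|_p^p=-\f{\lambda}{2}$; the first integral above then closes the argument exactly as in the main route.
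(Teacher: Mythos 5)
Your argument is correct. It differs from the paper's proof in the first half: the paper obtains $\theta_p(2\beta+1)=\f{\omega}{2}$ (with $\omega=-\lambda$ in your sign convention) purely algebraically, by combining the integrated first integral $\f12\|\phi_1'\|_2^2+\f1p\|\phi_1\|_p^p=\f{\omega}{2}$ with the Nehari identity $\|\phi_1'\|_2^2-\|\phi_1\|_p^p+\omega=0$ and the explicit value $2\beta+1=\f{p+2}{6-p}$ — which is exactly the ``purely algebraic'' alternative you sketch at the end via Nehari and Pohozaev (in one dimension the Pohozaev identity is just the first integral integrated over $\R$, so the two computations coincide). Your main route instead differentiates $\mu\mapsto E(\phi_\mu)=-\theta_p\mu^{2\beta+1}$ along the soliton branch and uses $\f{d}{d\mu}E(\phi_\mu)=\f{\lambda(\mu)}{2}$; this is a clean and conceptually appealing derivation (it explains \emph{why} $\theta_p(2\beta+1)$ is half the multiplier), and the differentiability of the branch is indeed harmless given the explicit scaling \eqref{eq:phimu} and the exponential decay of $\phi_1$ and $\phi_1'$. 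For the second half your route is arguably tidier than the paper's: you evaluate the first integral at the maximum point $x=0$, where $\phi_1'(0)=0$, to get $\f1p\phi_1(0)^{p-2}=\f{\omega}{2}$ directly, whereas the paper reads the same identity off the explicit hyperbolic--secant formula for $\phi_1$. Both approaches are complete; yours trades the explicit formula for a slightly more structural argument.
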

\begin{proof}
	Since $\phi_{1}$ is the ground state of $E(\cdot,\R)$ at mass $\mu=1$, there exists $\omega>0$ such that
	\begin{equation}
	\label{ELEphi1}
	\phi_{1}''+|\phi_{1}|^{p-2}\phi_{1}=\omega \phi_{1}\qquad\text{on }\R.
	\end{equation}
	The conservation of mechanical energy then implies
	\begin{equation*}
	\f{1}{2}(\phi_{1}'(x))^{2}+\f{1}{p}(\phi_{1}(x))^{p}=\f{\omega}{2}(\phi_{1}(x))^{2}
	\end{equation*}
	for every $x\in \R$, so that integrating on $\R$ one gets
	\begin{equation}
	\label{mec-en-phi1}
	\f{1}{2}\|\phi_{1}'\|_{2}^{2}+\f{1}{p}\|\phi_{1}\|_{p}^{p}=\f{\omega}{2}.
	\end{equation}
	Furthermore, multiplying \eqref{ELEphi1} by $\phi_{1}$ and integrating on $\R$ leads to
	\begin{equation}
	\label{nehari-phi1}
	\|\phi_{1}'\|_{2}^{2}-\|\phi_{1}\|_{p}^{p}+\omega=0.
	\end{equation}
	By \eqref{mec-en-phi1} and \eqref{nehari-phi1}, we get 
	\begin{equation}
	\label{eq:thetap}
	\theta_{p}=-E(\phi_{1},\R)=\f{6-p}{2(p+2)}\omega=\f{\omega}{2(2\beta+1)}\,.
	\end{equation}
	Recalling that $\phi_1$ is explicitly given by 
	\begin{equation*}
	\phi_{1}(x)=\left[\f p2\omega\left(1-\tanh^2\left(\f{p-2}2\sqrt{\omega}(|x|)\right)\right)\right]^{\f 1{p-2}},\qquad x\in\R,
	\end{equation*}
	it turns out that $\f{1}{p}|\phi_{1}(0)|^{p-2}=\f{\omega}{2}$ and, coupling with \eqref{eq:thetap}, we conclude.
\end{proof}

\section{Some rearrangement results}
\label{sec:appB}
The next two lemmas collect some general constructions based on the theory of rearrangements on graphs that are helpful when proving the non--existence results in Theorem \ref{3half-deg3}. Recall that $V^+$ is the set of vertices attached to at least one half--line.
\begin{lemma}
	\label{rearr1}
	Let $\G$ be a non--compact graph satisfying Assumption (H) with $N\geq3$ half--lines and $u\in H^{1}_{\mu}(\G)$ be a positive function. Then there exists $u^{*}\in H^{1}_{\mu}(S_{3})$ on the star--graph $S_3$ with $3$ half--lines such that
	\begin{equation}
	\label{ESleEG}
	E(u^{*}, S_{3})\le E(u,\G)
	\end{equation}
	and
	\begin{equation}
	\label{u0leminu}
	u^{*}(0)=\min_{\vv\in V^{+}}u(\vv).
	\end{equation}
	Moreover, $u^*$ is symmetric with respect to the origin and monotonically decreasing on $2$ half--lines of $S_3$, whereas on the remaining half--line it is non--decreasing from the origin to a unique maximum point and then non--increasing from this point on the rest of the half--line.
\end{lemma}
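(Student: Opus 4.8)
The plan is to realize $u^{*}$ as a decreasing rearrangement of $u$ onto $S_3$, built so that its superlevel sets are equimeasurable with those of $u$ while having at each level no more preimages than $u$; a Pólya--Szegő argument on graphs, in the spirit of \cite[Theorem 2.2]{AST} and \cite[Appendix A]{acfn_jde}, then yields \eqref{ESleEG}. Set $m:=\min_{\vv\in V^{+}}u(\vv)$ and $M:=\|u\|_{\infty}\geq m$, and denote by $N_u(t):=\#\{x\in\G:u(x)=t\}$ the number of preimages of the level $t$ (finite for a.e.\ $t$). The construction rests on two multiplicity estimates: (a) $N_u(t)\geq 3$ for a.e.\ $t\in(0,m)$, and (b) $N_u(t)\geq 2$ for a.e.\ $t\in(0,M)$. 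Bound (a) holds because each of the $N\geq 3$ half--lines $\HH_i$ emanates from a vertex $\vv_i\in V^{+}$ with $u(\vv_i)\geq m>t$, while $u\to0$ along $\HH_i$; the intermediate value theorem gives a crossing in the interior of each $\HH_i$, and these are distinct as the half--lines are pairwise disjoint. Bound (b) is precisely the mechanism of \cite[Theorem 2.2]{AST}: by Assumption (H) the maximum point of $u$ lies on a trail running through two half--lines, along which $u$ descends from $M$ to $0$ in both directions, crossing every level $t<M$ at least twice.

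Next I would build $u^{*}$ on $S_3=h_1\cup h_2\cup h_3$ by prescribing its level structure. Let $\mu(t):=|\{u>t\}|$ and impose $|\{u^{*}>t\}|=\mu(t)$ for every $t$, so that $u^{*}$ is equimeasurable with $u$; in particular $\|u^{*}\|_{L^{r}(S_3)}=\|u\|_{L^{r}(\G)}$ for all $r\geq1$, which already gives $u^{*}\in H^{1}_{\mu}(S_3)$ and $\|u^{*}\|_{p}^{p}=\|u\|_{p}^{p}$. On $h_1,h_2$ I take $u^{*}$ symmetric and decreasing from the common value $m$ at the origin down to $0$, while on $h_3$ I take it equal to $m$ at the origin, increasing to $M$ at a single point, and then decreasing to $0$. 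With this shape the superlevel set $\{u^{*}>t\}$ is, for $t\in(0,m)$, the union of one interval based at the origin on each of $h_1,h_2,h_3$ (three preimages), and, for $t\in(m,M)$, a single interval strictly inside $h_3$ (two preimages); the two regimes merge continuously at level $m$, which forces $u^{*}(0)=m=\min_{\vv\in V^{+}}u(\vv)$, i.e.\ \eqref{u0leminu}. The measure available at each level is split so that the active branches share the same speed $|(u^{*})'|$, which is the equality case of the Cauchy--Schwarz step below.

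Finally I would run the coarea comparison. For a.e.\ $t$ the coarea formula gives $-\mu'(t)=\sum_{u(x)=t}|u'(x)|^{-1}$, whence by Cauchy--Schwarz $\sum_{u(x)=t}|u'(x)|\geq N_u(t)^{2}/(-\mu'(t))$; for the equimeasurable equal--speed function $u^{*}$ one has instead the identity $\sum_{u^{*}(x)=t}|(u^{*})'(x)|=N_{u^{*}}(t)^{2}/(-\mu'(t))$. Since by construction $N_{u^{*}}(t)=3\leq N_u(t)$ on $(0,m)$ and $N_{u^{*}}(t)=2\leq N_u(t)$ on $(m,M)$, integrating the coarea identity $\|w'\|_{2}^{2}=\int_{0}^{M}\sum_{w(x)=t}|w'(x)|\,dt$ yields $\|(u^{*})'\|_{L^{2}(S_3)}^{2}\leq\|u'\|_{L^{2}(\G)}^{2}$. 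Combined with the equality of the $L^{p}$ norms this gives $E(u^{*},S_3)\leq E(u,\G)$, while the asserted monotonicity of $u^{*}$ is built into its profile.

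The main obstacle will be the rigorous implementation at the transition level $m$: one must verify that $\mu$ matches continuously the \emph{three--branch} description for $t<m$ and the \emph{bump} description for $t>m$, so that the profile on $h_3$ glues to the decreasing profiles on $h_1,h_2$ with common value $m$ at the origin, and one must justify the coarea formula together with the equal--speed distribution for a genuine $H^{1}$ competitor (handling possible plateaus of $u$ and the degenerate case $M=m$). These are standard but delicate points of graph rearrangement theory; the entire conceptual input, however, is contained in the two multiplicity bounds (a)--(b).
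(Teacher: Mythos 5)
Your proposal is correct and follows essentially the same route as the paper: the paper also cuts $u$ at the level $m=\min_{\vv\in V^{+}}u(\vv)$, rearranges $\{u>m\}$ onto an interval using the multiplicity--$2$ bound coming from Assumption (H) and $\{u\le m\}$ onto $S_3$ using the multiplicity--$3$ bound coming from the $N\geq3$ half--lines, and glues the two pieces into exactly the profile you describe. The only difference is that the paper invokes the known rearrangement inequalities (\cite[Proposition 3.1]{AST} and \cite[Appendix A]{acfn_jde}) as black boxes, where you rederive them via the coarea/Cauchy--Schwarz argument.
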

\begin{proof}
	Let $u\in H^{1}_{\mu}(\G)$ be a positive function, $m:=\min_{\vv\in V^{+}} u(\vv)$, $J:=\{x\in \G\,:\, u(x)>m\}$ and $u_{|J}$ be the restriction of $u$ to $J$. Note that $u(J)=(m,\|u\|_{\infty}]$ is connected and, by Assumption (H), every value in $u(J)$ is attained at least twice on $\G$, except possibly $\|u\|_{\infty}$. Denoting by $\widehat{u}\in H^{1}(-L,L)$ the symmetric rearrangement of $u_{|J}$ on the interval $(-L, L)$, with $L:=\f{|J|}{2}$, we have (see \cite[Proposition 3.1]{AST}) 
	\[
	\|u'\|_{L^2(J)}\geq\|\widehat{u}'\|_{L^2(-L,L)}\,,\quad\|u\|_{L^r(J)}=\|\widehat{u}\|_{L^r(-L,L)}\quad\forall r\geq1, \quad \widehat{u}(-L)=\widehat{u}(L)=m\,.
	\]
	Similarly, $u(\G\setminus J)\subseteq[0,m]$ is connected and every value in $u(\G\setminus J)$ is attained at least $N\geq3$ times (i.e. at least once on each half--line). Therefore, letting $\widetilde{u}\in H^1(S_3)$ be the symmetric rearrangement on $S_3$ of $u_{\mid \G\setminus J}$ as in \cite[Appendix A]{acfn_jde}, we get
	\[
	\|u'\|_{L^2(\G\setminus J)}\geq\|\widetilde{u}'\|_{L^2(S_3)}\,,\quad\|u\|_{L^r(\G\setminus J)}=\|\widetilde{u}\|_{L^r(S_3)}\quad \forall r\geq1,\quad \widetilde{u}(0)=m\,.
	\]
	Denoting by $h_{1},h_2,h_3$ the half--lines of $S_3$, set $u^*:S_3\to\R$
	\[
	u^*(x):=\begin{cases}
	\widehat{u}(x-L) & x\in [0,2L]\cap h_1\\
	\widetilde{u}(x-2L) & x\in [2L,+\infty)\cap h_{1}\\
	\widetilde{u}(x) & \text{otherwise}\,.
	\end{cases}
	\]
	By construction, $u^{*}\in H^{1}_{\mu}(S_{3})$, it satisfies \eqref{ESleEG} and \eqref{u0leminu} and enjoys the desired monotonicity and symmetry properties.
\end{proof}
\begin{lemma}
	\label{rearr-halflines}
	Let $\G$ be a non--compact graph with at least $N\ge3$ half--lines and $u\in H^{1}(\G)$ be a positive function. If $\|u\|_{L^{\infty}(\K)}<\|u\|_{L^{\infty}(\G\setminus\K)}$, then there exist two positive functions $u_{1},u_2\in H^{1}(\R^{+})$ such that 
	\begin{equation*}
	2\|u_{1}\|_{L^{2}(\R^{+})}^{2}+\|u_{2}\|_{L^{2}(\R^{+})}^{2}=\|u\|_{L^{2}(\G\setminus\K)}^{2},
	\end{equation*}
	\begin{equation*}
	2E(u_{1},\R^{+})+E(u_{2},\R^{+}) \le E(u,\G\setminus\K)
	\end{equation*}
	and
	\begin{equation*}
	u_{1}(0)=\min_{\vv\in V^{+}} u(\vv)\,,\quad u_{2}(0)=\|u\|_{L^{\infty}(\K)}.
	\end{equation*}
	In particular, $u_{1}$ is decreasing on $\R^+$, while $u_{2}$ is increasing from the origin to a unique point of maximum and then decreasing from this point on the rest of the half--line. 
	
\end{lemma}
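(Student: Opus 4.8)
The plan is to decompose $u$ on $\Omega:=\G\setminus\K$ at the two thresholds $m:=\min_{\vv\in V^+}u(\vv)$ and $M:=\|u\|_{L^\infty(\K)}$ and to rearrange the three resulting pieces separately, gluing them into the profiles $u_1,u_2$. First I would record the chain $m\le M<\|u\|_{L^\infty(\G\setminus\K)}=:P$: the strict inequality is exactly the hypothesis, while $m\le M$ holds because each half--line is attached at a vertex that also bounds an edge of $\K$, so $u(\vv)\le\|u\|_{L^\infty(\K)}=M$ for every $\vv\in V^+$. In particular no boundary value of $u$ on the half--lines exceeds $M$, which is what keeps the top part of $u$ away from the vertices.

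Next I set $A:=\{u>M\}\cap\Omega$, $B:=\{m<u\le M\}\cap\Omega$ and $C:=\{u\le m\}\cap\Omega$, and establish the relevant multiplicities by the intermediate value theorem along each half--line: every level $t\in(0,m)$ is attained at least $N\ge3$ times on $C$ (each half--line runs from $u(\vv)\ge m$ down to $0$ at infinity, hence crosses $t$), whereas, since all boundary values are $\le M$, for $t\in(M,P)$ the superlevel set is a disjoint union of open subintervals of the half--lines on whose endpoints $u=M$, so $t$ is attained an even number $\ge2$ of times on $A$. I would then rearrange $C$ symmetrically onto $S_3$ (as in \cite[Appendix A]{acfn_jde}), obtaining a radially decreasing $\widetilde u$ with $\widetilde u(0)=m$; rearrange $A$ symmetrically onto an interval $(-L,L)$ (as in \cite[Proposition 3.1]{AST}), obtaining a symmetric bump $\widehat u$ with $\widehat u(\pm L)=M$ and peak $P$; and rearrange $B$ by the ordinary monotone rearrangement onto an interval, obtaining a decreasing function from $M$ to $m$. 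Each step preserves every $L^r$ norm and does not increase the Dirichlet energy.

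I would then \emph{define} $u_1$ as the one--half--line profile of $\widetilde u$, and $u_2$ as the concatenation along $\R^+$ of the shifted bump $\widehat u(\,\cdot-L)$ on $[0,2L]$, then the decreasing rearrangement of $B$, then the one--half--line profile of $\widetilde u$. By construction $u_1$ is decreasing with $u_1(0)=m$, while $u_2$ is continuous, increases from $u_2(0)=M$ to the single maximum $P$ and then decreases to $0$, giving the asserted monotonicity. The two identities are checked through the layer--cake formula: computing distribution functions separately on the ranges $(M,P)$, $(m,M)$, $(0,m)$ gives $2\abs{\{u_1>t\}}+\abs{\{u_2>t\}}=\abs{\{u>t\}\cap\Omega}$ for all $t\ge0$ (the profile $\widetilde u$ being counted twice through $2u_1$ and once through $u_2$), hence $2\|u_1\|_{L^r}^r+\|u_2\|_{L^r}^r=\|u\|_{L^r(\Omega)}^r$ for every $r$, yielding in particular the required $L^2$ mass identity and matching the $L^p$ terms; for the kinetic parts the three Polya--Szego inequalities give $2\|u_1'\|_2^2+\|u_2'\|_2^2=\|\widehat u'\|_2^2+\|(u_{|B}^*)'\|_2^2+\|\widetilde u'\|_{L^2(S_3)}^2\le\|u'\|_{L^2(\Omega)}^2$, which combines with the $L^p$ identity to produce the claimed energy bound.

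The delicate point is the middle region $B$: its levels $t\in(m,M)$ may be attained with odd or fluctuating multiplicity, since boundary values with $m<u(\vv)\le M$ add extra crossings, so $B$ cannot be symmetrized and must instead be collapsed, via the monotone rearrangement, into the single descending flank of $u_2$ between heights $M$ and $m$. The main work is to verify that this collapse is compatible with the \emph{exact} mass identity (and not merely with an inequality) and that it glues continuously to the bump above and to the decreasing tail below, so that $u_2$ is genuinely a one--bump function; it is precisely here that the hypothesis $\|u\|_{L^\infty(\K)}<\|u\|_{L^\infty(\G\setminus\K)}$ enters, guaranteeing that the global maximum lies strictly above $M$ and therefore inside $A$.
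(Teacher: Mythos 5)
Your proof is correct and follows essentially the same route as the paper's: the same three--region decomposition of $\G\setminus\K$ at the thresholds $M=\|u\|_{L^\infty(\K)}$ and $m=\min_{\vv\in V^+}u(\vv)$, the same choice of rearrangement for each piece (symmetric onto an interval above $M$, monotone between $m$ and $M$, symmetric onto $S_3$ below $m$), and the same concatenation defining $u_2$. The only additions are cosmetic — you make the inequality $m\le M$ and the layer--cake verification of the exact mass identity explicit, which the paper leaves implicit — and the one slip (the endpoints of a component of $\{u>t\}$ for $t\in(M,P)$ carry the value $t$, not $M$) does not affect the multiplicity count.
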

\begin{proof}
	Set $m:=\min_{\vv\in V^{+}} u(\vv)$ and $M:=\|u\|_{L^{\infty}(\K)}$. Denoting by $J_{1}:=\{x\in\G\setminus \K\,:\,u(x)>M\}$ and by $u_{|J_{1}}$ the restriction of $u$ to $J_{1}$, then $u(J_{1})$ is connected and, since the $L^\infty$ norm of $\G$ is not attained on the compact core $\K$, every value $t\in u(J_{1})$ is attained at least twice (on any half--line where $u$ attains its $L^\infty$ norm), except possibly $\|u\|_{L^{\infty}(\G\setminus \K)}$. Letting $\overline{u}\in H^1(-L_{1},L_{1})$ be the symmetric rearrangement of $u_{|J_{1}}$ on the interval $(-L_{1},L_{1})$, with $L_1:=\f{|J_{1}|}{2}$, by \cite[Proposition 3.1]{AST} it follows 
	\begin{equation*}
	\|\overline{u}\|_{L^2(-L_1,L_1)}=\|u\|_{L^2(J_1)}\,,\quad E(\overline{u}, (-L_{1},L_{1}))\le E(u_{|J_{1}}, J_{1})\,,\quad \lim_{x\to\pm L_{1}^\mp} u(x)=M.
	\end{equation*}
	Moreover, denoting by $J_{2}:=\{x\in\G\setminus \K\,:\, m< u(x)\le M\}$, we observe that $u(J_{2})\subseteq[m,M]$ is connected and every value $t\in u(J_{2})$ is attained at least once (for instance on any half--line where $u$ attains its $L^\infty$ norm). We thus consider the decreasing rearrangement $\widetilde{u}\in H^1(0,L_2)$ of $u_{|J_{2}}$ on the interval $[0,L_{2})$, where $L_{2}:=|J_{2}|$, so that 
	\begin{equation*}
	\|\widetilde{u}\|_{L^2(0,L_2)}=\|u\|_{L^2(J_2)}\,\quad E(\widetilde{u}, (0,L_{2}))\le E(u_{|J_{2}}, J_{2})\,,\quad \widetilde{u}(0)=M\,,\quad \lim_{x\to L_{2}^-}\widetilde{u}(x)=m.
	\end{equation*}
	Similarly, $u(\G\setminus (\K\cup J_{1} \cup J_{2}))\subset [0,m]$ is connected and every value $t\in u(\G\setminus (\K\cup J_{1} \cup J_{2}))$ is attained at least $N\geq3$ times (i.e. once on each half--line). Therefore, rearranging symmetrically decreasing on $S_3$ the restriction of $u$ to $\G\setminus (\K\cup J_{1} \cup J_{2})$ (see \cite[Appendix A]{acfn_jde}), there exists $u_{1}\in H^{1}(\R^{+})$ satisfying 
	\begin{equation*}
	\begin{split}
	3\|u_1\|_{L^2(\R^+)}=&\,\|u\|_{L^2(\G\setminus (\K\cup J_{1} \cup J_{2}))}\\
	3E(u_{1}, \R^{+})\le&\, E(u,\G\setminus (\K\cup J_{1} \cup J_{2}))\\
	\lim_{x\to 0^+} u_{1}(x)=&\,m.
	\end{split}
	\end{equation*}
	Then taking $u_1$ as above and $u_2\in H^1(\R^+)$ as
	\[	
	u_{2}(x):=\begin{cases}
	\overline{u}(x-L_{1}) & x \in [0, 2L_{1}),\\
	\widetilde{u}(x-2L_{1}) & x\in [2L_{1},2L_{1}+L_{2}),\\
	u_{1}(x-2L_{1}-L_{2}) & \text{elsewhere}
	\end{cases}
	\]
	proves the claim.
\end{proof}

\section*{Acknowledgements}
\noindent The work has been partially supported by the MIUR project ``Dipartimenti di Eccellenza 2018--2022" (CUP E11G18000350001) and by the INdAM GNAMPA project 2020 ``Modelli differenziali alle derivate parziali per fenomeni di interazione".

\end{document}